\def\theequation{\@arabic{\c@section}.\@arabic{\c@equation}}
\newtheorem{theorem}{Theorem}[section] 
\newtheorem{lemma}{Lemma}[section] 
\newtheorem{definition}{Definition}[section] 
\newtheorem{remark}{Remark}[section]
\newcommand{\R}{\mathbb{R}}
\newcommand{\N}{\mathbb{N}}
\newcommand{\X}{\mathcal{X}_{0}}
\newcommand{\Q}{\textit{Q}}
\newcommand{\A}{\mathcal{A}_{k_{n+1}}}
\newcommand{\Nh}{\mathcal{N}_\la}
\newcommand{\rh}{\rho_{\X}}
\newcommand{\lmt}{\lim_{n\to\infty}}
\newcommand{\sm} {\setminus}
\newcommand{\ve}{\varepsilon}
\newcommand{\pa} {\partial}
\newcommand{\al} {\alpha}
\newcommand{\Th} {\Theta}
\newcommand{\ga} {\gamma}
\newcommand{\om} {\Omega}
\newcommand{\De} {\Delta}
\newcommand{\la} {\lambda}
\newcommand{\La} {\Lambda}
\newcommand{\noi} {\noindent}
\newcommand{\na} {\nabla}
\newcommand{\oline} {\overline}
\newcommand{\ds} {\displaystyle}
\newcommand{\ra} {\rightarrow}
\newcommand{\real}{\mathbb{R}}
\newcommand{\rnn}{\mathbb{R}^{N}}
\newcommand{\lv}{\lVert}
\newcommand{\rv}{\rVert}
\newcommand{\weak}{\rightharpoonup}
\newcommand{\lb}{\left(}
\newcommand{\rb}{\right)}
\newcommand{\lsb}{\left[}
\newcommand{\rsb}{\right]}
\newcommand{\grad}{\nabla}
\title{Multiplicity results for mixed local-nonlocal variable exponent problem involving singular and superlinear term}
\author{Shammi Malhotra\thanks{Department of Mathematics, Indian Institute of Technology Delhi, Hauz Khas New Delhi 110016,  India, \texttt{maz228085@maths.iitd.ac.in}}, Ambesh Kumar Pandey\thanks{Department of Mathematics, Indian Institute of Technology Delhi, Hauz Khas New Delhi 110016,  India, \texttt{pandey.ambesh190@gmail.com}} and K. Sreenadh\thanks{Department of Mathematics, Indian Institute of Technology Delhi, Hauz Khas New Delhi 110016,  India, \texttt{sreenadh@maths.iitd.ac.in}}}
\date{}
\begin{document}

\date{}
\maketitle

\allowdisplaybreaks

 \begin{abstract}
\noi In this paper, we study a class of quasilinear elliptic equations involving both local and nonlocal operators with variable exponents. The problem exhibits singular nonlinearities along with a subcritical superlinear growth term and a parameter $\la$. We study the existence of multiple solutions with the help of variational methods by restricting the associated energy functional on appropriate subsets of the Nehari manifold. Using the topological index and the structure of the fibering maps, we analyse a key splitting property of the associated Nehari manifold. This decomposition allows us to establish the existence of two distinct solutions. Additionally, we establish the $L^\infty$-bound for the solutions by adopting the De Giorgi iteration method.

\medskip
\noi\textbf{Keywords:} Mixed local nonlocal operator; Variable exponents; Nehari manifold; Fibering map; Singularity; Degree theory; Regularity.

\medskip
\noi\textbf{Mathematics Subject Classification:} 35J60, 35J50, 35J75, 55M25, 35R11, 35J92
\end{abstract}

\section{Introduction}
In this article, we consider the following mixed local 
and nonlocal elliptic problem involving 
variable exponents and a singularity
	\begin{equation}\label{eq1}\tag{$\mathcal{Q}_{\lambda}$}
   \left\{
	\begin{aligned}
		-\De_{p(x)}u+\lb -\De\rb^s_{p(x)}u 
&=\lambda a(x)u^{-\gamma(x)}+b(x)u^{r(x)-1},\,
u>0 \ \text{ in } \om,\\
u &= 0 \qquad \text{ in } \rnn\backslash\om,
	\end{aligned}
    \right.
\end{equation}
where
$\om \subset \rnn$
is a bounded domain with Lipschitz boundary,
$\la>0, s\in (0,1),$
the exponent $p(x):=p(x,x)$
for all $x\in\rnn$
where $p(x,y)\in C(\rnn\times\rnn)$
satisfying  the following assumptions
\begin{enumerate}
    
\item[$(P_1)$] Symmetry: $p(x,y)=p(y,x)$
for all $x,y\in \rnn$;
    
\item[$(P_2)$] $1<\inf_{(x,y)\in 
\rnn\times\rnn}p(x,y)\le \sup_{(x,y)\in 
\rnn\times\rnn}p(x,y)<N$.
\end{enumerate}
The weight function $a(x)$
is positive, and $b(x)$
is nonnegative. The variable exponents
$\ga(x)$
and
$r(x)\in C(\oline\om,\real),$
satisfying some assumptions stated later. 
The local operator is defined as 
$$
\De_{p(x)}u:=\text{div}(|\na u|^{p(x)-2}\na u),
$$
while the nonlocal part, the fractional
$p(x)$-Laplacian is defined as the Cauchy 
principle value, which is given by
$$
\lb-\De\rb^s_{p(x)}u:=\text{P.V.}
\int_{\mathbb{R}^{N}}
\frac{|u(x)-u(y)|^{p(x,y)-2}(u(x)-u(y))}
{|x-y|^{N+sp(x,y)}}\,dy,\quad x\in\rnn.
$$

 Lately, the combination of local and nonlocal operators 
 has emerged as a crucial framework, as they model 
 many physical phenomena, such as describing diverse 
 processes like the diffusion of biological populations.
 Presence of two types of dispersal diffuses the population 
 into a two-fold manner, namely a classical one 
 (i.e., Brownian motion) and a long-range 
 (i.e., L\'{e}vy flight) which can be effectively 
 modelled by Laplacian and fractional Laplacian 
 respectively (see
\cite{ valdinoci_logistic_equation, 
 valdinoci_ecological_niche}).
 It also helps in the study of anisotropic heat 
 transport in reversed shear magnetic fields 
 \cite{blazevski_anisotropic_heat_transport}.

Operators involving variable growth are extensively studied due 
to the precision in modelling various phenomena where the 
property of the subject under consideration 
depends on the point of observation.
For instance,
In image restoration 
(see \cite{Image_restore}),
let
$I$
be an image input representing the shades of gray over a domain
$\om \subset \real^2$
which is made up of the true image corrupted by noise.
Also, assume that noise is additive, i.e.,
$I = u +$noise, where
$u$
is the true image.
To reduce the effect of noise, one can apply a 
smoothing process by minimizing the following 
energy functional
\begin{equation*}
    \mathcal{E}(u) = 
\int_{\om} ( |\grad u(x)|^{p(x)} + |u(x) - I(x)|^2 )dx,
 \text{ with } 1 \leq p(x) \leq 2.
\end{equation*}
In regions likely to have edges, the values of
$p(x)$
are closer to
$1,$
while in other areas, they tend to be closer to
$2$.
Another application arises in the study of 
Electrorheological fluids
\cite{ruzicka2007electrorheological}, 
a special type of viscous fluid whose 
mechanical properties can change dramatically 
depending on an applied electric field.

Over the past few decades, problems involving 
singularity have garnered much research interest.
Such equations first appeared in
\cite{fulks}, where the authors proved an 
existence result for a singular equation 
modelling the steady-state temperature 
distribution in an electrically conducting medium.
Singular elliptic equations also arise in 
applications such as thermo-conductivity
\cite{fulks} and signal transmissions
\cite{signal}.
One of the most notable applications of 
singular equations is modelling boundary 
layers in the theory of non-Newtonian fluids, 
especially in the case of pseudo-plastic fluids
\cite{pseudo}.
For additional applications, 
we refer the reader to
\cite{Singular_book,sing_review} 
and the references therein.

 The study of purely singular problems 
 governed by the equation
 	\begin{equation*}
   \left\{
	\begin{aligned}
		-\De u&=f(x)u^{-\gamma},\, u>0 
		\quad  \text{ in } \om,\\
		u &= 0 \qquad \text{ in } \pa\om,
	\end{aligned}
    \right.
\end{equation*}
 has a long history dating to works such as
\cite{fulks,stuart}.
These types of equations gained a lot of 
attention after the pioneering work of Crandall 
\textit{et al.}
\cite{crandaal}.
In
\cite{mckenna}, Lazer and McKenna proved that 
the unique solution belongs to
$W_0^{1,2}(\om)$
if and only if
$0<\ga<3,$
and that the solution lies in
$C^1(\oline\om)$
when
$0<\ga<1.$
Since then, substantial progress in various 
directions involving singularities has occurred.
For developments in the local case, we refer to
\cite{boccado2010, coclite, lair2, zhang2004}, 
and for the nonlocal case, see
\cite{Arora_nonlocal, CANINO_2017,Giacomoni_2019_nonlocal} 
and references within.

 We now turn our attention towards singular local 
 and nonlocal equations involving subcritical or 
 critical perturbation, whose prototype is 
given by an equation of the form
 \begin{equation}\label{eq_local_sing_lower}
	\begin{aligned}
		-\alpha\De_{p}u+\beta\lb -\De\rb^s_{p}u 
&=\lambda f(x) u^{-\gamma}+ \mu u^{r}
	\end{aligned}
\end{equation}
where
$\alpha,\beta\ge0;\la,\mu,\ga>0$
and
$f$
is given datum.
In the local case
$(\beta=0),$
problem \eqref{eq_local_sing_lower} 
was studied by Yijing \textit{et al.}
\cite{sun2001} for
$\al=1,p=2, 0<\ga<1,f\in L^2(\om)$
and
$1<r<2^*-1.$
The authors proved the existence of at least 
two positive solutions with the help of 
fibering map analysis and the Nehari manifold.
In
\cite{Haito,hirano_2004}, 
authors extended the results of
\cite{sun2001} by considering the cases
$\ga=1$
and
$r\le2^*-1.$
While Hirano \textit{et al.}
\cite{hirano_2004} used a similar 
approach involving the study of a fibering 
map on a subset of the Nehari manifold, Haito
\cite{Haito} took a different strategy by 
combining sub-supersolution and variational 
techniques to obtain the existence of at 
least two positive solutions.
For further study, we refer to
\cite{arcoya2014,Bal_singular_pertub,Jacques_local_pertub, hirano2008}.
In the nonlocal case
$(\alpha=0),$
Barrios \textit{et al.}
\cite{Barrios_2015} studied problem 
\eqref{eq_local_sing_lower} for 
$\beta=1,p=2, \ga>0$
and
$r>1.$
They proved the existence of a 
solution for all values of
$r$
and established the existence of 
multiple solutions when
$$
r<2_s^*-1=
\frac{Np}{N-sp}.
$$
The multiplicity results for
$r=2_s^*-1,$
which corresponds to the critical exponent, 
were later addressed in
\cite{Mukherjee_2016} for
$0<\ga\le 1,$
in
\cite{Mukherjee_2019} for
$0<\ga<3$
and in
\cite{Mukherjee_2017} for all
$\ga>0.$
The quasilinear case involving the fractional
$p$-Laplacian case was studied in
\cite{Mukherjee_nlocal_p}.
In addition, Goyal
\cite{Goyal_1} established the existence of 
multiple solutions using fibering map 
analysis and the Nehari manifold technique.

Problems involving singular nonlinearity are 
relatively less explored in the variable 
exponent setting.
In
\cite{C1_alpha_reg, Saoudi_2017}, the authors 
studied the existence of multiple solutions 
for the equation
\begin{equation*}
   \left\{
	\begin{aligned}
-\De_{p(x)}u&=\lambda a(x)u^{-\gamma(x)}+b(x)
u^{r(x)-1},\,u>0 \qquad \text{ in } \om,\\
u &= 0 \qquad \text{ in } \rnn\backslash\om,
	\end{aligned}
    \right.
\end{equation*}
for
$a(x)=b(x)=1$
and
$p(x)<r(x)<p^*(x).$
Adapting an approach similar to that of
\cite{Haito}, they obtained the existence 
of multiple solutions.
Additionally, they established
$C^{1,\alpha}(\oline\om)$
regularity results for the positive weak solutions.
For the nonlocal case involving the fractional
$p(x)$-Laplace operator, we refer to
\cite{Chammem_2}, where the authors used Nehari 
manifold and variational techniques to obtain 
multiplicity results.
These works were motivated by the earlier study in
\cite{Sweta_2020}.
For a broader perspective on results concerning 
variable exponent spaces, we refer interested 
readers to the comprehensive review by R\u{a}dulescu
\cite{RADULESCU_2015}.

Before presenting our results, 
we briefly review the existing literature 
related to the constant exponent case 
corresponding to problem \eqref{eq1}, which is given as
\begin{equation*}\label{eq_constant}\tag{$\mathcal{R}_{\lambda}$}
   \left\{
	\begin{aligned}
-\De_{p}u+\lb -\De\rb^s_{p}u &=\lambda f(x) 
u^{-\gamma}+ \beta u^{r-1},\,u>0 \qquad \text{ in } \om,\\
		u &= 0 \qquad \text{ in } \rnn\backslash\om.
	\end{aligned}
    \right.
\end{equation*}
 There exists a vast literature concerning 
 the problem related to \eqref{eq_constant}.
For
$p=2,$
in
\cite{arora2025combined}, 
Arora and R\v{a}dulescu studied the 
unperturbed singular problem, i.e.,
$\beta = 0$
and obtained existence and nonexistence results 
together with power and exponential type Sobolev 
regularity results.
Moreover, they analyzed the boundary behaviour 
of the weak solution depending on the 
regularity of source data
$f$
and the exponent
$\gamma.$
Subsequently, Huang and Hajaiej
\cite{Shuibo} generalized these results by 
considering a general, possibly singular nonlinearity
$h(u).$
Later, for the mixed local and nonlocal
$p$-Laplace operator, Garain and Ukhlov
\cite{Garain1} proved the existence, uniqueness, 
regularity and symmetry of weak solutions.
They also established a mixed Sobolev inequality 
involving a singular term, which provides a 
necessary and sufficient condition for the 
existence of the solutions.
An interesting consequence of this study is 
the equivalence between the existence of 
solutions of singular mixed local and nonlocal
$p$-Laplace equations and singular
$p$-Laplace equations.
For
$\beta = 1$
and
$p =2,$
Garain
\cite{Garain2} considered the subcritical
 perturbation case, 
i.e.,
$r  \in (2, 2^*)$
where
$2^* = 
\frac{2N}{N-2}$
and with the help of suitable approximations and 
variational methods they proved the existence of 
multiple solutions depending on the values of
$\la.$
Das and Bal
\cite{bal2024multiplicitysolutionsmixedlocalnonlocal}, 
generalized the results for general
$p \in (1,N).$
In addition, a nonexistence result is 
obtained for sufficiently large
$\la.$
For critical perturbation, Biagi and Vecchi
\cite{Biagi} followed the approach of Haitao
\cite{Haito} and Hirano
\cite{hirano_2004} to obtain the 
existence of multiple solutions.
However, the presence of the fractional 
Laplacian together with the Laplacian 
creates a lack of scaling invariance of 
the mixed operator.
To deal with this, they introduced a 
sufficiently small parameter
$\ve$
with the fractional Laplacian.

Inspired by the literature above,
we are interested in studying the problem
\eqref{eq_constant} in the variable exponent setup,
which leads to the problem
\eqref{eq1}.
This problem can be treated as an extension of 
the problem studied by Dhanya \textit{et al.} in
\cite{dhanya2025multiplicityresultsmixedlocal}.
The authors have considered the problem
\eqref{eq_constant} with right-hand term of the form
$$
\la\big(a(x) |u|^{\delta -2}u + b(x) |u|^{r-2}u\big),
$$
where
$\delta$
corresponds to the sublinear term and
$r$
has superlinear growth that may take critical value
$(p^*$
or
$q^*_s).$
They used the Nehari manifold technique to obtain the 
existence of at least two nonnegative solutions for 
sufficiently small values of
$\la.$
It is worth mentioning that the energy functional associated 
with \eqref{eq1} is not differentiable, even in the sense of 
G\^ateaux derivative, which motivates us to study the problem 
using the Nehari Manifold method.
To the best of our knowledge, the study of mixed local and 
nonlocal operators involving variable exponents has not been 
previously addressed, even in the absence of singular terms.
We established an appropriate functional framework for 
exploring problems involving mixed variable exponent 
operators.
The presence of variable exponents presents another 
difficulty.
To overcome this, we carefully used degree theory arguments 
in the fibering maps analysis.
We find that the combined effects of singular and 
superlinear 
nonlinearities considerably change the structure of the 
solution set, which leads to the existence of multiple 
solutions.
In addition, we established a uniform bound of the weak 
solutions with the De Giorgi iteration technique and a 
localization argument.

The paper is organized as follows.
In Section \ref{sec1}, we introduce the 
notations and preliminary results required 
for the rest of the paper and state our main results.
In Section \ref{sec2}, we study the fibering maps 
associated with the energy functional on subsets of 
the Nehari manifold.
In addition, we analyse the structure of the Nehari 
manifold in detail and establish several related results.
Finally, in Section \ref{sec3}, we prove our main 
results by showing that the minimizers on these 
subsets of the Nehari manifold correspond to 
solutions of the problem.

\section{Preliminaries and Main Results}\label{sec1}
In this section, we recall some definitions 
and preliminary results related to variable 
exponents and associated function spaces.
We also present the statement of our main 
result and formulate the functional setting 
required to study \eqref{eq1}.
We start by introducing the following notation: 
for a function
$h(x)\in C(\oline\om,\real),$
we write
$$
h^+:=\sup_{x\in\oline\om}h(x), 
\qquad h^-:=\inf_{x\in \oline\om}h(x).
$$
We set 
$$
\Phi_+:=\max\{\Phi,0\}, \qquad
 \Phi_-:=\max\{-\Phi,0\}.
$$

Let us define the function space
$$
\mathcal{C}_+(\oline\om):=
\Big \{h\in C(\oline\om,\R):1<
\inf_{x\in\oline\om}h(x)\le
\sup_{x\in \oline\om}h(x)<\infty
\Big \}.
$$
The variable exponents
$\ga(x),r(x)$
and the weights
$a(x),b(x)$
verify the following assumptions:
\begin{enumerate}

\item[$(A_1)$]
$a:\om \ra \real$
such that
$$
0<a\in L^{
\frac{p^*(x)}{p^*(x)-(1-\ga(x))}}(\om),
$$
where
$$
p^*(x):=
\frac{Np(x,x)}{N-p(x,x)}
$$
is the critical exponent.

\item[$(B_1)$]
$b:\om \ra \real$
such that
$$
0\le b\in L^{
\frac{p^*(x)}{p^*(x)-r(x)}}(\om) .
$$

\item[$(G_1)$] The exponent
$\ga(x)\in C(\oline\om,\real)$
and
$0<\ga^-\le \ga(x)\le \ga^+<1$.
    
\item[$(G_2)$] The exponent
$r(x)\in C(\oline\om,\real)$
and
$$
1<p^-\le p(x)\le p^+<r^-\le r(x)\le r^+<(p^*)^- .
$$
    
\item[$(G_3)$] We have that
    \begin{equation}\label{eq5}
\frac{p^+-(1-\ga^+)}{r^+-(1-\ga^+)}p^-<
\frac{p^--(1-\ga^-)}{r^+-(1-\ga^-)}r^+.
    \end{equation}
\end{enumerate}

\begin{remark}\rm
 We make the following observations
\begin{enumerate}
    
\item It is easy to see that if all the 
    exponents are constant, condition
$(G_3)$
becomes
$0<p<r.$
    
\item  One possible choice of variable 
    exponents that satisfies assumptions
$(G_1)-(G_3)$
is given by
$\gamma(x) = 
\frac{|x|^2+1}{3}, p(x) = 
\frac{|x|^2+4}{3},$
and
$r(x) = 
\frac{|x|^2+7}{3},$
here
$x\in B_1(0)\subset\rnn$
\end{enumerate}    
\end{remark}

Now, we formally state the main results of this paper.

\begin{theorem}\label{main_theorem}
    Let
$\Omega \subset \mathbb{R}^N$
be a bounded domain with Lipschitz boundary and let
$s \in (0,1).$
Assume that the function
$p(x, y)$
satisfies conditions
$(P_1)$
and
$(P_2),$
and that the assumptions
$(A_1),$
$(B_1),$
and
$(G_1)-(G_3)$
hold.
Then, there exists a positive constant
$\Lambda > 0$
such that for any
$\lambda \in (0, \Lambda),$
problem \eqref{eq1} admits at least two positive 
weak solutions.
\end{theorem}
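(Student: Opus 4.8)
The plan is to follow the classical Nehari manifold strategy adapted to the non-differentiable, variable-exponent, mixed-operator setting. I would first set up the functional framework: the natural solution space $\X$ should be the closure of $C_c^\infty(\om)$ under a norm combining the Luxemburg norm of $|\nabla u|$ on $\om$ with the Gagliardo-type seminorm $\iint_{\R^N\times\R^N}\frac{|u(x)-u(y)|^{p(x,y)}}{|x-y|^{N+sp(x,y)}}\,dx\,dy$, and I would record the embedding $\X \hookrightarrow L^{q(x)}(\om)$ (compact for $q(x)<(p^*)^-$), the relation between the modular and the norm, and the convexity/weak lower semicontinuity of the operator part. The energy functional is
\[
J_\la(u)=\int_\om \frac{|\nabla u|^{p(x)}}{p(x)}\,dx+\iint \frac{|u(x)-u(y)|^{p(x,y)}}{p(x,y)|x-y|^{N+sp(x,y)}}\,dx\,dy-\la\int_\om \frac{a(x)}{1-\ga(x)}|u|^{1-\ga(x)}\,dx-\int_\om \frac{b(x)}{r(x)}|u|^{r(x)}\,dx,
\]
which is continuous but not Gâteaux differentiable because of the singular term; assumptions $(A_1)$, $(B_1)$ guarantee via Hölder in variable-exponent spaces that the last two integrals are finite and continuous on $\X$.

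Next I would introduce, for $u\in\X$ with $u>0$, the fibering map $\phi_u(t)=J_\la(tu)$ for $t>0$, compute $\phi_u'(t)$ (this is legitimate since $t\mapsto t^{1-\ga(x)}$ and $t\mapsto t^{r(x)}$ are smooth in $t$ even though the nonlinearity is not smooth in $u$), and define the Nehari manifold $\mathcal{N}_\la=\{u\in\X\sm\{0\}: \la u \text{ is a critical point of }\phi_u \text{ at }t=1\}$, i.e.\ $\ld \text{operator}(u),u\rd - \la\int a|u|^{1-\ga}-\int b|u|^{r}=0$. Because the operator term is not homogeneous (the exponent $p(x)$ varies and there are two different operators), $\phi_u'$ will not factor as a single power of $t$; this is exactly where $(G_2)$ and especially $(G_3)$ enter. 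I would split $\mathcal{N}_\la=\mathcal{N}_\la^+\cup\mathcal{N}_\la^0\cup\mathcal{N}_\la^-$ according to the sign of $\phi_u''(1)$, show using $(G_3)$ that for $\la$ small $\mathcal{N}_\la^0=\emptyset$, and establish that $\mathcal{N}_\la^\pm$ are both nonempty — here the key is a degree-theoretic / intermediate-value argument on $t\mapsto \phi_u'(t)$: one shows $\phi_u'(t)$ changes sign the right number of times by bracketing the competing powers $p^-, p^+$ against $r^-,r^+$ and $1-\ga^\pm$, with $(G_3)$ providing the strict separation of the relevant thresholds so that the "sublinear-dominated" branch and the "superlinear-dominated" branch are genuinely distinct. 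I would then prove $J_\la$ is bounded below and coercive on $\mathcal{N}_\la$ (using $r(x)<(p^*)^-$ and the embedding), and set $\alpha_\la^\pm=\inf_{\mathcal{N}_\la^\pm} J_\la$, with $\alpha_\la^+<0$.

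Then comes the extraction of minimizers. On $\mathcal{N}_\la^+$: take a minimizing sequence, pass to a weak limit $u_1$, use compact embeddings for the lower-order terms and weak lower semicontinuity for the operator term, and argue $u_1\in\mathcal{N}_\la^+$ with $J_\la(u_1)=\alpha_\la^+$; a standard argument (testing with $(u_1-\e w)_+$ and letting $\e\to0$, exploiting that the singular term has a definite sign) shows $u_1>0$ a.e.\ and that $u_1$ is a weak solution. On $\mathcal{N}_\la^-$: the infimum may not be attained directly because $\mathcal{N}_\la^-$ need not be weakly closed near $0$; I would instead work on a suitable closed subset (e.g.\ $\{u\in\mathcal{N}_\la^-: \text{norm}\ge \rho_\la>0\}$, with $\rho_\la$ obtained from the structure of the fibering map), show the infimum there equals $\alpha_\la^-$, extract $u_2$, and verify $u_2\in\mathcal{N}_\la^-$, $u_2\neq u_1$ (they lie in disjoint pieces of the Nehari manifold), $u_2>0$, and $u_2$ solves \eqref{eq1}. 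Finally one chooses $\Lambda$ as the supremum of $\la$ for which all the smallness requirements (no $\mathcal{N}_\la^0$, sign of $\alpha_\la^+$, the bracketing in the fibering analysis) hold simultaneously.

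The main obstacle — and the technical heart of the paper — is the fibering map analysis when the operator is \emph{not} homogeneous: for the constant-exponent $p$-Laplacian $\phi_u'(t)=t^{p-1}\|u\|^p-\la t^{-\ga}\int a u^{1-\ga}-t^{r-1}\int b u^r$ has an explicit structure, but here $\int \frac{|\nabla(tu)|^{p(x)}}{p(x)}$ is a genuine integral superposition of powers $t^{p(x)}$, so $\phi_u'(t)=0$ cannot be rearranged into "$g(t)=\la$" with $g$ having an obvious unique maximum. This is precisely why the authors invoke degree theory / topological index: one must count solutions of $\phi_u'(t)=0$ and track the sign of $\phi_u''$ using only the one-sided bounds $p^-\le p(x)\le p^+$, $r^-\le r(x)\le r^+$, $\ga^-\le\ga(x)\le\ga^+$, and the crucial gap inequality \eqref{eq5} in $(G_3)$, which is exactly the quantitative condition ensuring the "two-solution" picture survives the loss of scaling homogeneity. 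Getting this bracketing uniform in $u\in\X$ (so that $\Lambda$ is a genuine constant, not $u$-dependent) will require care with the modular-vs-norm inequalities.
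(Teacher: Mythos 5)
Your proposal tracks the paper's proof essentially step by step: the solution space $\X$, the degree-theoretic fibering-map analysis to control the non-homogeneous superposition of powers $t^{p(x)}$, the decomposition $\Nh=\Nh^+\cup\Nh^0\cup\Nh^-$ with the gap condition $(G_3)$ driving the key sign separation, coercivity and extraction of minimizers on each branch, and the one-sided variational-inequality test (perturbing within the Nehari manifold via an implicit-function-theorem map, then testing with $(u+\ve\phi)_+$) to promote minimizers to positive weak solutions. The only cosmetic differences are that the paper proves $\Nh^-$ is strongly closed rather than restricting to a subset $\{\|u\|_{\X}\ge\rho_\la\}$, and $(G_3)$ actually enters to establish $\Th_\la^+<0$ rather than $\Nh^0=\{0\}$ (the latter needs only the smallness of $\la$ together with the uniform modular--norm bounds) — both equivalent in effect.
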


Next, we state the following regularity result.

\begin{theorem}\label{reg_result}
 Let
$b(x)\in L^\infty(\om)$
and
$u$
be a positive weak solution of problem \eqref{eq1}.
Suppose that one of the following conditions holds
\begin{enumerate}
    
\item[\textnormal{(a)}]
$a(x) \in L^\infty(\Omega)$; or
    
\item[\textnormal{(b)}]
$a(x) \in L^{
\frac{r(x)}{r(x) - (1 - \ga(x))}}(\Omega)$
and the variable exponents
$r(x),$
$p(x)$
satisfy
    \begin{enumerate}
        
\item[$(i)$]
$\dfrac{r^+}{(p^-)^*} < \dfrac{(r^-)^2}{r^+p^-} 
< 1 + \dfrac{r^+}{(p^-)^*}$;
        
\item[$(ii)$]
$\dfrac{(r^+)^2}{(p^-)^*} < \dfrac{r^-}{p^-}
< 1+\dfrac{(r^+)^2}{(p^-)^*}.$
    \end{enumerate}
\end{enumerate}
 Then
$u$
is of class
$L^\infty(\om)$.
\end{theorem}

\begin{remark}\rm
Let
$B_1(0) \subset \R^3$
and consider the exponents
$p(x) = 
\frac{15 + 2|x|^2}{10}$
and
$r(x) = 
\frac{90 + 3|x|^2}{50}.$
It can be verified that these choices of
$p(x)$
and
$r(x)$
satisfy the assumptions of Theorem~\ref{reg_result}.
Moreover, by taking
$\gamma(x) = 
\frac{11 - 2|x|^2}{20},$
the conditions
$(G_1)$–$(G_3)$
are also fulfilled.
\end{remark}

 In order to study the positive solutions of 
\eqref{eq1}, we consider the following problem
\begin{equation}\label{eq2}\tag{$\mathcal{Q}^+_{\lambda}$}
   \left\{
	\begin{aligned}
		-\De_{p(x)}u+\lb -\De \rb^s_{p(x)}u 
&=\lambda a(x)u_+^{-\gamma(x)}+b(x)u_+^{r(x)-1},\,
 u>0 \  \text{ in } \om,\\
u &= 0 \qquad \text{ in } \rnn\backslash\om.
	\end{aligned}
    \right.
\end{equation}
It is easy to observe that if
$u>0$
is a weak solution of \eqref{eq2}, then
$u$
also satisfies the problem \eqref{eq1}.
To study the problem \eqref{eq2} in the variational setup, 
we require the following variable exponent function spaces.
We start with the definition of the variable Lebesgue space
$L^{p(x)}(\om).$
For
$p\in \mathcal{C}_+(\overline{\Omega}),$
define 
\begin{equation*}
    L^{p(x)}(\om) = 
\Big \{ u : \om \to \real : u  
    \text{ is measurable and } 
\int_\om|u(x)|^{p(x)}dx < \infty 
\Big \}.
\end{equation*}
This is a Banach space endowed with the norm, 
known as the Luxemburg norm, given by
\begin{equation*}
    \lv u \rv_{L^{p(x)}} = \inf
\Big \{ \eta > 0  : 
\int_{\om}
\Big | 
\frac{u(x)}{\eta} 
\Big |^{p(x)} dx \le 1 
\Big \}.
\end{equation*}
Moreover, it satisfies the following 
H\"{o}lder-type inequality analogous to 
that of the classical
$L^p{(\om)}$
spaces.

\begin{lemma}
	Let
$p\in \mathcal{C}_+(\overline{\Omega})$
such that
$ 
\frac{1}{p(x)} + 
\frac {1}{p'(x)}=1$.
	Then for any 
$u \in L^{p(x)}(\Omega)$
and
$
v\in L^{p'(x)}(\Omega)$
we have
$$
\Big | 
\int_{\Omega} uv dx 
\Big |\leq \Big(
\frac{1}{p^{-}} + 
\frac{1}{p^{'-}} \Big) \lv u \rv_{L^{p(x)}(\Omega)}
	\lv v \rv_{L^{p'(x)}(\Omega)}.
$$
\end{lemma}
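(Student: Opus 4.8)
The plan is to run the classical proof of Hölder's inequality, replacing the constant conjugate exponents by the pointwise ones and keeping track of where the variable exponent enters. If $\lv u \rv_{L^{p(x)}(\Omega)}=0$ or $\lv v \rv_{L^{p'(x)}(\Omega)}=0$, then $u=0$ or $v=0$ almost everywhere, so both sides of the asserted inequality vanish; hence I may assume $\alpha:=\lv u \rv_{L^{p(x)}(\Omega)}>0$ and $\beta:=\lv v \rv_{L^{p'(x)}(\Omega)}>0$.

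The one preliminary fact I would isolate is the \emph{unit modular property} of the Luxemburg norm: for every $w\in L^{q(x)}(\Omega)$ with $\lv w \rv_{L^{q(x)}(\Omega)}>0$,
\begin{equation*}
\int_{\Omega}\left(\frac{|w(x)|}{\lv w \rv_{L^{q(x)}(\Omega)}}\right)^{q(x)}dx\le 1.
\end{equation*}
This follows from the definition of the norm as an infimum: choose $\eta_n\downarrow\lv w \rv_{L^{q(x)}(\Omega)}$ with $\int_{\Omega}(|w|/\eta_n)^{q(x)}dx\le1$, observe that $\eta\mapsto(|w(x)|/\eta)^{q(x)}$ is nonincreasing in $\eta$, and apply the monotone convergence theorem. (If one wishes to avoid the limit, it suffices to carry the whole argument with $\alpha,\beta$ replaced by $\alpha+\ve,\beta+\ve$ and let $\ve\downarrow0$ at the end.) I then apply this with $(w,q)=(u,p)$ and with $(w,q)=(v,p')$.

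Next, for almost every $x\in\Omega$ the exponents $p(x)$ and $p'(x)$ lie in $(1,\infty)$ and are conjugate, so Young's inequality applied to the nonnegative reals $|u(x)|/\alpha$ and $|v(x)|/\beta$ gives
\begin{equation*}
\frac{|u(x)|}{\alpha}\cdot\frac{|v(x)|}{\beta}\le\frac{1}{p(x)}\left(\frac{|u(x)|}{\alpha}\right)^{p(x)}+\frac{1}{p'(x)}\left(\frac{|v(x)|}{\beta}\right)^{p'(x)}.
\end{equation*}
Integrating over $\Omega$, bounding $1/p(x)\le 1/p^-$ and $1/p'(x)\le 1/p^{'-}$, and invoking the unit modular property for $u$ and for $v$, I obtain
\begin{equation*}
\frac{1}{\alpha\beta}\int_{\Omega}|u(x)v(x)|\,dx\le\frac{1}{p^-}\int_{\Omega}\left(\frac{|u|}{\alpha}\right)^{p(x)}dx+\frac{1}{p^{'-}}\int_{\Omega}\left(\frac{|v|}{\beta}\right)^{p'(x)}dx\le\frac{1}{p^-}+\frac{1}{p^{'-}}.
\end{equation*}
Multiplying by $\alpha\beta$ and using $\left|\int_{\Omega}uv\,dx\right|\le\int_{\Omega}|uv|\,dx$ yields the claim. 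The only delicate point is the unit modular property of the Luxemburg norm; the remainder is the verbatim constant-exponent argument with $p$ and $p'$ read pointwise, so I do not expect any genuine obstacle.
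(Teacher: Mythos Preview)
Your proof is correct and is precisely the standard argument for the H\"older inequality in variable exponent Lebesgue spaces: normalize by the Luxemburg norms, apply Young's inequality pointwise with the conjugate pair $(p(x),p'(x))$, replace $1/p(x)$ and $1/p'(x)$ by their respective suprema $1/p^-$ and $1/p'^-$, and invoke the unit-ball property $\rho(w/\lv w\rv)\le 1$ of the modular.

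Note, however, that the paper does not supply its own proof of this lemma: it is stated in the preliminaries as a known result (the classical reference being, e.g., Fan--Zhao or the monograph of Diening et al.). So there is no proof in the paper to compare against; your argument is the textbook one and would serve as a complete justification.
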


 We recall a useful inequality 
that involves two variable exponents.

\begin{lemma}[Lemma $A.1$
of \cite{giacomoni_para_px}]\label{lemA1}
	Let
$\nu_1(x)\in L^\infty(\Omega)$
such that
$\nu_1\geq0,\; \nu_1\not\equiv 0.$
Let
$\nu_2:\Omega\ra\real$
be a measurable function 
	such that
$\nu_1(x)\nu_2(x)\geq 1$
a.e. in
$\Omega.$
Then for every
$u\in L^{\nu_1(x)\nu_2(x)}(\Omega),$
$$
	\lv |u|^{\nu_1(x)}\rv_{L^{\nu_2(x)}(\Omega)}\leq 
	\lv u\rv_{L^{\nu_1(x)\nu_2(x)}
(\Omega)}^{\nu_1^-}+\lv u
\rv_{L^{\nu_1(x)\nu_2(x)}(\Omega)}^{\nu_1^+}.
$$
\end{lemma}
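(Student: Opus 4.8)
The statement to be established is the interpolation-type inequality of Lemma~\ref{lemA1}, which bounds $\lv |u|^{\nu_1(x)}\rv_{L^{\nu_2(x)}(\Omega)}$ by powers of $\lv u\rv_{L^{\nu_1(x)\nu_2(x)}(\Omega)}$. The plan is to argue directly from the definition of the Luxemburg norm, using the standard modular--norm relationship. Write $\rho=\lv u\rv_{L^{\nu_1(x)\nu_2(x)}(\Omega)}$ and set $t:=\lv |u|^{\nu_1(x)}\rv_{L^{\nu_2(x)}(\Omega)}$. The goal is to show $t\le\rho^{\nu_1^-}+\rho^{\nu_1^+}$, and by definition of the infimum it suffices to exhibit one admissible competitor, i.e.\ to show that
$$\int_\Omega\left(\frac{|u(x)|^{\nu_1(x)}}{\rho^{\nu_1^-}+\rho^{\nu_1^+}}\right)^{\nu_2(x)}dx\le 1.$$

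First I would dispose of the trivial case $u\equiv 0$ (both sides are $0$) and assume $\rho>0$; when $\rho=\infty$ there is nothing to prove. The key pointwise estimate is that, since $\nu_1(x)\ge 0$, we have $\rho^{\nu_1^-}+\rho^{\nu_1^+}\ge \rho^{\nu_1(x)}$ for every $x$: indeed if $\rho\le 1$ then $\rho^{\nu_1(x)}\ge\rho^{\nu_1^+}$ and $\rho^{\nu_1^+}$ appears on the right, while if $\rho\ge 1$ then $\rho^{\nu_1(x)}\le\rho^{\nu_1^+}$, so in all cases $\rho^{\nu_1(x)}\le\max\{\rho^{\nu_1^-},\rho^{\nu_1^+}\}\le\rho^{\nu_1^-}+\rho^{\nu_1^+}$. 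Consequently, using $\nu_1(x)\nu_2(x)\ge 1\ge 0$,
$$\left(\frac{|u(x)|^{\nu_1(x)}}{\rho^{\nu_1^-}+\rho^{\nu_1^+}}\right)^{\nu_2(x)}\le\left(\frac{|u(x)|^{\nu_1(x)}}{\rho^{\nu_1(x)}}\right)^{\nu_2(x)}=\left(\frac{|u(x)|}{\rho}\right)^{\nu_1(x)\nu_2(x)}.$$
Integrating over $\Omega$ and invoking the modular inequality $\int_\Omega(|u|/\lv u\rv_{L^{q(x)}})^{q(x)}\,dx\le 1$ with $q(x)=\nu_1(x)\nu_2(x)$ (valid because $q\in\mathcal{C}_+$ requires $q\ge 1$, which is exactly the hypothesis $\nu_1\nu_2\ge 1$, and the unit-ball characterization of the Luxemburg norm holds for such exponents) gives the desired bound $\le 1$, whence $t\le\rho^{\nu_1^-}+\rho^{\nu_1^+}$.

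The only genuinely delicate point is the justification of the modular inequality $\int_\Omega(|u|/\rho)^{q(x)}\,dx\le 1$ for $q(x)=\nu_1(x)\nu_2(x)$: this is the standard fact that the unit ball of $L^{q(x)}(\Omega)$ coincides with the set where the modular is $\le 1$, which holds whenever $q\in\mathcal{C}_+(\overline\Omega)$ and in particular requires $q^->1$ or at least $q\ge 1$ together with essential boundedness of $q$. Since $\nu_1\in L^\infty$ and $\nu_2$ need only be measurable with $\nu_1\nu_2\ge 1$, one should note that $q$ may fail to be bounded; however, the closed-unit-ball property $\lv u\rv_{L^{q(x)}}\le 1\iff\int|u|^{q(x)}\le 1$ still holds for any measurable $q\ge 1$ by a monotone convergence / Fatou argument on the defining family $\{\eta:\int(|u|/\eta)^{q(x)}\le 1\}$, so the step is legitimate; I would either cite this or include the short Fatou argument. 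Everything else is the elementary pointwise inequality above, so no iteration or compactness is needed—the proof is essentially one line of pointwise estimation followed by integration.
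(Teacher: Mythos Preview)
Your proof is correct. The paper does not supply its own proof of this lemma; it is quoted from \cite{giacomoni_para_px} (stated there as Lemma~A.1) and used as a black box, so there is no argument in the paper to compare against. Your approach---bounding $\rho^{\nu_1(x)}$ pointwise by $\rho^{\nu_1^-}+\rho^{\nu_1^+}$ and then reducing to the unit-ball modular inequality for the exponent $\nu_1(x)\nu_2(x)$---is exactly the standard one-line argument behind this estimate. One small expository slip: in the case $\rho\le 1$ you wrote ``$\rho^{\nu_1(x)}\ge\rho^{\nu_1^+}$'', which points the wrong way for what you need; the correct observation there is $\rho^{\nu_1(x)}\le\rho^{\nu_1^-}$ (since $\nu_1(x)\ge\nu_1^-$ and $\rho\le 1$), and your subsequent line ``$\rho^{\nu_1(x)}\le\max\{\rho^{\nu_1^-},\rho^{\nu_1^+}\}$'' is correct regardless. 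Your remark that the unit-ball property $\lv u\rv_{L^{q(x)}}\le 1\iff\int_\Omega|u|^{q(x)}\,dx\le 1$ must be checked when $q=\nu_1\nu_2$ is possibly unbounded is well taken, and the Fatou/monotone-convergence justification you sketch is the right way to handle it.
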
	

To handle the Luxemburg norm, 
we define the modular function
$\rho:L^{p(x)}(\Omega)\ra \real$
given as
$$
\rho(u) = 
\int_\Omega |u|^{p(x)}dx.
$$
The relations between Luxemburg norm
$\lv \cdot \rv_{L^{p(x)}(\Omega)}$
and 
the corresponding modular function
$\rho(\cdot)$
are given as follows:

\begin{lemma}[\cite{fan_lp_wp_spaces}]
\label{modular_ineq}
	Let
$u \in L^{p(x)}(\Omega),$
then 
	\begin{enumerate}
		
\item[$(i)$]$
\lv u \rv_{L^{p(x)}(\Omega)}<1(=1;>1) 
\text{ if and only if } \rho(u)<1(=1;>1);$
		
\item[$(ii)$] If
$\lv u \rv_{L^{p(x)}(\Omega)}>1,$
then
$\lv u \rv_{L^{p(x)}(\Omega)} ^{p^{-}}
\leq\rho(u)\leq\lv u \rv_{L^{p(x)}(\Omega)}^{p^{+}}$;

\item[$(iii)$] If
$\lv u \rv_{L^{p(x)}(\Omega)}<1,$
then
$\lv u \rv_{L^{p(x)}(\Omega)}^{p^{+}}
\leq\rho(u)\leq\lv u \rv_{L^{p(x)}(\Omega)}^{p^{-}}.$
	\end{enumerate}
\end{lemma}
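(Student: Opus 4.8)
The plan is to work directly from the definitions, analysing the scaling behaviour of the modular $\eta \mapsto \rho(u/\eta) = \int_\Omega |u(x)|^{p(x)}\eta^{-p(x)}\,dx$ for $\eta > 0$. The trivial case $u \equiv 0$, in which both sides of each assertion vanish, is dealt with separately, so assume $u \not\equiv 0$. First I would record the elementary properties of $\eta \mapsto \rho(u/\eta)$ on $(0,\infty)$: it is finite for every $\eta > 0$ (since $\rho(u) < \infty$ because $u \in L^{p(x)}(\Omega)$, and $1 < p^- \le p^+ < \infty$ because $p \in \mathcal{C}_+(\overline{\Omega})$); it is continuous, by dominated convergence, a dominating function on a compact subinterval $[\underline{\eta}, \overline{\eta}] \subset (0,\infty)$ being the integrable $|u(x)|^{p(x)}\max(\underline{\eta}^{-p^+}, \underline{\eta}^{-p^-})$; it is strictly decreasing, because $t \mapsto t^{-p(x)}$ is strictly decreasing and $|u| > 0$ on a set of positive measure; and it obeys $\rho(u)\,\eta^{-p^-} \le \rho(u/\eta)$ for $\eta \le 1$ and $\rho(u/\eta) \le \rho(u)\,\eta^{-p^-}$ for $\eta \ge 1$, so that $\rho(u/\eta) \to +\infty$ as $\eta \to 0^+$ and $\rho(u/\eta) \to 0$ as $\eta \to +\infty$. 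It follows that there is a unique $\eta_0 = \eta_0(u) > 0$ with $\rho(u/\eta_0) = 1$, that $\{\eta > 0 : \rho(u/\eta) \le 1\} = [\eta_0, \infty)$, and hence that the Luxemburg infimum is attained with $\|u\|_{L^{p(x)}(\Omega)} = \eta_0$ and $\rho\big(u/\|u\|_{L^{p(x)}(\Omega)}\big) = 1$.

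Part $(i)$ then follows immediately: since $\eta \mapsto \rho(u/\eta)$ is strictly decreasing with value $1$ at $\eta_0 = \|u\|_{L^{p(x)}(\Omega)}$, the number $\rho(u) = \rho(u/1)$ is $< 1$, $= 1$, or $> 1$ exactly according as $1 > \eta_0$, $1 = \eta_0$, or $1 < \eta_0$, that is, exactly according as $\|u\|_{L^{p(x)}(\Omega)} < 1$, $= 1$, or $> 1$; since the three alternatives on each side are mutually exclusive and exhaustive, this establishes all three equivalences in $(i)$ at once.

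For part $(ii)$, assume $\|u\|_{L^{p(x)}(\Omega)} = \eta_0 > 1$ and start from $\int_\Omega |u(x)|^{p(x)}\eta_0^{-p(x)}\,dx = \rho(u/\eta_0) = 1$. Because $\eta_0 > 1$, we have $\eta_0^{-p^+} \le \eta_0^{-p(x)} \le \eta_0^{-p^-}$ pointwise; multiplying by $|u|^{p(x)} \ge 0$ and integrating gives $\eta_0^{-p^+}\rho(u) \le 1 \le \eta_0^{-p^-}\rho(u)$, which rearranges to $\|u\|_{L^{p(x)}(\Omega)}^{p^-} \le \rho(u) \le \|u\|_{L^{p(x)}(\Omega)}^{p^+}$. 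For part $(iii)$, assume $0 < \|u\|_{L^{p(x)}(\Omega)} = \eta_0 < 1$; now the power ordering reverses, $\eta_0^{-p^-} \le \eta_0^{-p(x)} \le \eta_0^{-p^+}$, and the same manipulation of $\rho(u/\eta_0) = 1$ yields $\eta_0^{-p^-}\rho(u) \le 1 \le \eta_0^{-p^+}\rho(u)$, i.e. $\|u\|_{L^{p(x)}(\Omega)}^{p^+} \le \rho(u) \le \|u\|_{L^{p(x)}(\Omega)}^{p^-}$.

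The only genuinely delicate point is the content of the first paragraph, namely that the Luxemburg infimum is attained and coincides with the unique root $\eta_0$ of $\rho(u/\cdot) = 1$; once this is established, parts $(i)$--$(iii)$ are nothing more than pointwise comparisons of powers of the single positive number $\eta_0$, followed by integration. I would also stress that the hypothesis $p \in \mathcal{C}_+(\overline{\Omega})$, i.e. $1 < p^- \le p^+ < \infty$, is precisely what guarantees the finiteness of the modular, the two limiting behaviours, and the pointwise power estimates used throughout.
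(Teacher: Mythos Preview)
Your proof is correct. The paper does not prove this lemma at all; it merely cites it from \cite{fan_lp_wp_spaces} and records the statement for later use, so there is no in-paper argument to compare against. Your approach---establishing that the Luxemburg infimum is attained at the unique root $\eta_0$ of $\rho(u/\cdot)=1$ via continuity, strict monotonicity, and the two endpoint limits, and then reading off parts $(i)$--$(iii)$ from pointwise bounds on $\eta_0^{-p(x)}$---is exactly the standard proof one finds in the variable-exponent literature (including the cited source), and every step you outline is sound.
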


\begin{lemma}[\cite{fan_lp_wp_spaces}]
\label{lem:modular_cgs_lp}
	Let
$u,u_{m}  \in L^{{p(x)}}(\Omega),~m=1,2,3,\cdots .$
Then the following statements are equivalent:
	\begin{enumerate}
		
\item[$(i)$]
$\displaystyle{\lim_{m\ra \infty} }\lv 
u_{m} - u \rv_{L^{p(x)}} =0;$
		
\item[$(ii)$]
$\displaystyle{\lim_{m\ra \infty}} 
\rho(u_{m} -u)=0;$
		
\item[$(iii)$]
$
u_{m} \text{ converges to
$u$
in }  \Omega  \text { in measure and } 
\displaystyle{\lim_{m\ra \infty}} \rho(u_{m})
= \rho(u).$
	\end{enumerate}
\end{lemma}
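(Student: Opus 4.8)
The plan is to establish the two equivalences $(i)\Leftrightarrow(ii)$ and $(ii)\Leftrightarrow(iii)$ separately, relying only on the norm--modular comparison inequalities recalled just above together with one elementary convexity estimate for the map $t\mapsto|t|^{q}$, $q\in[p^-,p^+]$.

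\emph{Step 1 ($(i)\Leftrightarrow(ii)$).} If $\lv u_m-u\rv_{L^{p(x)}}\to 0$, then for $m$ large $\lv u_m-u\rv_{L^{p(x)}}<1$, so by part $(iii)$ of the preceding lemma $\rho(u_m-u)\le\lv u_m-u\rv_{L^{p(x)}}^{p^-}\to 0$. Conversely, if $\rho(u_m-u)\to 0$ then eventually $\rho(u_m-u)<1$, hence $\lv u_m-u\rv_{L^{p(x)}}<1$ and $\lv u_m-u\rv_{L^{p(x)}}^{p^+}\le\rho(u_m-u)\to 0$. This part is routine.

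\emph{Step 2 ($(ii)\Rightarrow(iii)$).} For $\ve>0$ set $E_m=\{x\in\om:|u_m(x)-u(x)|>\ve\}$; on $E_m$ one has $|u_m-u|^{p(x)}\ge\min\{\ve^{p^-},\ve^{p^+}\}$, so $|E_m|\le\rho(u_m-u)/\min\{\ve^{p^-},\ve^{p^+}\}\to 0$, which is convergence in measure. For the modular convergence I would use the elementary inequality: for every $\de>0$ there is $C_\de>0$ with $\big||a+b|^{q}-|a|^{q}\big|\le\de|a|^{q}+C_\de|b|^{q}$ for all $a,b\in\real$ and all $q\in[p^-,p^+]$ (proved via the mean value theorem and Young's inequality on $\{|b|\le|a|\}$, trivially on $\{|b|>|a|\}$, with uniformity coming from compactness of $[p^-,p^+]$). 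Taking $a=u(x)$, $b=u_m(x)-u(x)$, $q=p(x)$ and integrating gives $\int_\om\big||u_m|^{p(x)}-|u|^{p(x)}\big|\,dx\le\de\,\rho(u)+C_\de\,\rho(u_m-u)$, hence $\limsup_m|\rho(u_m)-\rho(u)|\le\de\,\rho(u)$; letting $\de\downarrow 0$ gives $\rho(u_m)\to\rho(u)$.

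\emph{Step 3 ($(iii)\Rightarrow(ii)$).} This is the step I expect to be the main obstacle, since here one genuinely needs a convexity/compactness argument of Brezis--Lieb type rather than a monotone comparison. I would argue by subsequences: as the limit candidate is $0$, it suffices to show every subsequence of $(u_m)$ admits a further subsequence along which $\rho(u_m-u)\to 0$. Given such a subsequence, convergence in measure lets me extract a further subsequence (not relabeled) with $u_m\to u$ a.e.\ in $\om$. By convexity of $t\mapsto|t|^{p(x)}$ the functions $w_m:=\tfrac12\big(|u_m|^{p(x)}+|u|^{p(x)}\big)-\big|\tfrac{u_m-u}{2}\big|^{p(x)}$ are nonnegative and integrable, and $w_m\to|u|^{p(x)}$ a.e.; Fatou's lemma then yields $\rho(u)\le\liminf_m\big[\tfrac12(\rho(u_m)+\rho(u))-\int_\om\big|\tfrac{u_m-u}{2}\big|^{p(x)}\,dx\big]$, and since $\rho(u_m)\to\rho(u)<\infty$ this forces $\int_\om\big|\tfrac{u_m-u}{2}\big|^{p(x)}\,dx\to0$, whence $\rho(u_m-u)\le 2^{p^+}\int_\om\big|\tfrac{u_m-u}{2}\big|^{p(x)}\,dx\to0$ as desired. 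The points requiring care are the nonnegativity and integrability of $w_m$ (which follow from convexity and from $u_m,u\in L^{p(x)}(\om)$) and the passage from convergence in measure to a.e.\ convergence along a subsequence; both are standard but should be written out explicitly.
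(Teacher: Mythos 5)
The paper does not prove Lemma~\ref{lem:modular_cgs_lp}; it is quoted verbatim from the reference \cite{fan_lp_wp_spaces} with no argument supplied, so there is no ``paper's proof'' to compare against. Your self-contained proof is correct. Step~1 is the standard consequence of the norm--modular comparison inequalities. Step~2 correctly deduces convergence in measure from $\rho(u_m-u)\ge \min\{\ve^{p^-},\ve^{p^+}\}\,|E_m|$, and the uniform elementary inequality $\bigl||a+b|^{q}-|a|^{q}\bigr|\le\de|a|^{q}+C_\de|b|^{q}$ (uniform in $q\in[p^-,p^+]$ by compactness) cleanly gives $\rho(u_m)\to\rho(u)$; your limsup-then-$\de\downarrow0$ finish uses $\rho(u)<\infty$, which holds since $u\in L^{p(x)}(\Omega)$. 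Step~3 is the genuinely substantive implication; your Brezis--Lieb/Fatou argument with $w_m:=\tfrac12(|u_m|^{p(x)}+|u|^{p(x)})-\bigl|\tfrac{u_m-u}{2}\bigr|^{p(x)}\ge0$ is a standard and correct way to obtain $\rho(u_m-u)\to0$ under modular convergence plus a.e.\ convergence along subsequences, and the subsequence-of-a-subsequence reduction to pass from convergence in measure to a.e.\ convergence is handled correctly, as is the final upgrade from ``every subsequence has a further subsequence converging to $0$'' to convergence of the whole sequence. In short: the paper cites this as a known fact, while you have given a complete proof along the lines one finds in Fan--Zhao and in Diening et al.; nothing is missing and no step fails.
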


Let
$p:\oline\om\to \R$
be such that
$1< p^-\le p(x)\le p^+.$
We say that
$p$
is log-H\"older continuous 
(see \cite{fan_lp_wp_spaces}) 
if there exists a positive constant
$M$
such that
$$
|p(x)-p(y)|\le
\frac{M}{-\log|x-y|}, 
\text{ for all }x,y\in\oline\om,\,\text{with } |x-y|<
\frac{1}{2}.
$$

In what follows, we assume that
$p$
is
$\log$-Hölder continuous.
Now, with the help of the variable Lebesgue space, 
the variable Sobolev space
$W^{1,p(x)}(\om)$
is defined as
\begin{equation*}
    W^{1,p(x)}(\om) =
\Big \{ u \in L^{p(x)}(\om) : 
    |\grad u|^{p(x)} \in L^{p(x)}(\om) 
\Big \},
\end{equation*}
with the norm
$$
\lv u \rv_{1,p(x)} = \lv u \rv_{L^{p(x)}(\om)} 
+ \lv \grad u \rv_{L^{p(x)}(\om)}.
$$
The space
$W^{1,p(x)}_0(\om)$
is defined as the closure of
$C_0^\infty(\om)$
in
$W^{1,p(x)}(\om).$

The following result is a Sobolev embedding 
theorem for variable exponent spaces; its 
proof can be found in 
\cite{Variable_book} and
\cite{fan_lp_wp_spaces}.

\begin{lemma}\label{lm:Sobolev_embedding}
    Let
$\om\subset\rnn$
be a bounded open set with Lipschitz boundary.
If
$p$
is
$\log$-H\"older continuous then 
    \begin{equation*}
        W^{1,p(x)}(\om)
\hookrightarrow L^{q(x)}(\om) 
\quad\text{ for any }q(x)\in L^\infty(\om)
    \end{equation*}
    such that
$q(x)$
satisfies
$1<q(x)\le p^*(x)$
for all
$x\in\om.$
Also, this embedding is compact if
$1<q(x)< p^*(x).$
\end{lemma}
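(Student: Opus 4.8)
The statement is the variable‑exponent Sobolev (Rellich--Kondrachov) theorem, so the plan is to reduce everything to the single continuous critical embedding $W^{1,p(x)}(\om)\hookrightarrow L^{p^*(x)}(\om)$, prove that, and then read off the sub‑critical and compact cases by soft modular‑space arguments. \emph{Step 1 (reduction to the critical exponent).} Since $\om$ is bounded and $1<q(x)\le p^*(x)$, the generalized H\"older inequality together with the modular--norm relations gives the continuous inclusion $L^{p^*(x)}(\om)\hookrightarrow L^{q(x)}(\om)$: split $\om$ into $\{|u|\le1\}$, where the $q(x)$‑modular is bounded by $|\om|<\infty$, and $\{|u|>1\}$, where $|u|^{q(x)}\le|u|^{p^*(x)}$. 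Hence it suffices to prove $W^{1,p(x)}(\om)\hookrightarrow L^{p^*(x)}(\om)$ with a bounded embedding constant.

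\emph{Step 2 (the critical embedding).} Because $\pa\om$ is Lipschitz and $p$ is log‑H\"older, there is a bounded linear extension operator $E\colon W^{1,p(x)}(\om)\to W^{1,p(x)}(\rnn)$ with $\mathrm{supp}\,(Eu)$ inside a fixed bounded neighbourhood of $\om$, so it is enough to control $\lv Eu\rv_{L^{p^*(x)}(\rnn)}$. For a compactly supported $W^{1,p(x)}(\rnn)$‑function one has the pointwise bound $|Eu(x)|\le c_N\int_{\rnn}|x-y|^{1-N}|\grad Eu(y)|\,dy=:c_N\,I_1(|\grad Eu|)(x)$ a.e. The log‑H\"older regularity of $p$ together with $1<p^-\le p^+<N$ guarantees boundedness of the Hardy--Littlewood maximal operator on $L^{p(x)}(\rnn)$ (and on the conjugate space of $L^{p^*(x)}$), and a Hedberg‑type argument then yields the Sobolev inequality $\lv I_1 f\rv_{L^{p^*(x)}(\rnn)}\le C\lv f\rv_{L^{p(x)}(\rnn)}$ for $f$ supported in that neighbourhood; chaining the estimates gives $\lv u\rv_{L^{p^*(x)}(\om)}\le\lv Eu\rv_{L^{p^*(x)}(\rnn)}\le C\lv\,|\grad Eu|\,\rv_{L^{p(x)}(\rnn)}\le C\lv u\rv_{1,p(x)}$. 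A more elementary alternative is a localization proof: by log‑H\"older continuity cover $\oline\om$ by finitely many balls on which $\mathrm{osc}\,p$ is as small as desired, with the radius quantitatively tied to the oscillation, take a subordinate partition of unity, apply the classical constant‑exponent Gagliardo--Nirenberg inequality on each ball, and patch the local estimates via the modular--norm relations. In either route the genuinely substantial point --- and the one where the log‑H\"older hypothesis is indispensable --- is exactly the passage from the constant Sobolev exponent to the variable target $L^{p^*(x)}(\om)$ (as opposed to the trivially obtained $L^{(p^-)^*}(\om)$); this is the main obstacle, everything else being routine.

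\emph{Step 3 (compactness when $q(x)<p^*(x)$).} Let $(u_n)$ be bounded in $W^{1,p(x)}(\om)$. Since $p(x)\ge p^->1$ and $|\om|<\infty$, $(u_n)$ is bounded in $W^{1,p^-}(\om)$, so by the classical Rellich--Kondrachov theorem a subsequence converges in $L^1(\om)$ and a.e.\ to some $u$; by Step 2, $(u_n)$ is bounded in $L^{p^*(x)}(\om)$, whence $\sup_n\rho_{p^*(\cdot)}(u_n-u)<\infty$. Write $w_n=u_n-u$, so $w_n\to0$ a.e., and for fixed $M\ge1$ split $\rho_{q(\cdot)}(w_n)=\int_{\{|w_n|\le M\}}|w_n|^{q(x)}\,dx+\int_{\{|w_n|>M\}}|w_n|^{q(x)}\,dx$. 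The first term tends to $0$ as $n\to\infty$ for each fixed $M$ by dominated convergence (the integrand is dominated by the constant $M^{q^+}\in L^1(\om)$); setting $\sigma:=\inf_{x\in\om}\big(p^*(x)-q(x)\big)$, which is positive whenever $q$ is continuous on the compact set $\oline\om$ (by the pointwise strict inequality), on $\{|w_n|>M\}$ one has $|w_n|^{q(x)}\le M^{-\sigma}|w_n|^{p^*(x)}$, so the second term is $\le M^{-\sigma}\rho_{p^*(\cdot)}(w_n)\le CM^{-\sigma}$. Letting $n\to\infty$ and then $M\to\infty$ yields $\rho_{q(\cdot)}(w_n)\to0$, which by Lemma~\ref{lem:modular_cgs_lp} is equivalent to $\lv w_n\rv_{L^{q(x)}(\om)}\to0$; hence the embedding is compact.

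In summary, Steps 1 and 3 are standard modular‑space manipulations plus the classical Rellich--Kondrachov theorem, while the hard work is concentrated in Step 2, where the log‑H\"older condition must be exploited quantitatively to obtain the sharp critical embedding into the variable Lebesgue space $L^{p^*(x)}(\om)$.
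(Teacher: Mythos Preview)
The paper does not supply its own proof of this lemma; it states the result and refers to \cite{Variable_book} and \cite{fan_lp_wp_spaces}. Your three-step outline --- reduction to the critical target via the modular inclusion $L^{p^*(x)}(\om)\hookrightarrow L^{q(x)}(\om)$, the critical embedding through an extension operator and the Riesz-potential/maximal-function machinery (or, alternatively, localisation to balls of small oscillation), and compactness by classical Rellich--Kondrachov in $W^{1,p^-}(\om)$ plus a modular splitting --- is exactly the route taken in those references, so your proposal coincides with what the paper invokes.

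One point worth flagging in Step~3: to make the tail estimate work you need $\sigma:=\inf_{x\in\om}\big(p^*(x)-q(x)\big)>0$, and you secure this by assuming $q\in C(\oline\om)$. The lemma as written, however, only hypothesises $q\in L^\infty(\om)$, under which the pointwise strict inequality $q(x)<p^*(x)$ does \emph{not} force $\sigma>0$. The compactness statements in \cite{Variable_book} and \cite{fan_lp_wp_spaces} are in fact formulated for continuous $q$ (or with an explicit uniform-gap condition), so the lemma as stated in the paper is slightly stronger than what the cited proofs give; your argument correctly covers the continuous case, and since every variable exponent actually used later in the paper ($\gamma(x),\,r(x),\,p(x)$) lies in $C(\oline\om)$, this discrepancy has no effect on the applications.
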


For
$s\in(0,1)$
and a function
$p \in C(\om \times \om)$
with the symmetric property, i.e.,
$p(x,y) = p(y,x)$
for all
$x,y \in \om,$
the fractional Sobolev space with variable exponents
$W^{s,p(x),p(x,y)}(\om)$
is defined as 
\begin{align*}
 \mathcal{W}
 & =W^{s,p(x),p(x,y)}(\om) \\
 & = 
 \!
\Big \{ u \in L^{p(x)}(\om) : 
\!
\int_{\om \times \om} 
\frac{|u(x) - u(y)|^{p(x,y)}}{\eta^{p(x,y)} 
|x-y|^{N + s p(x,y)}} dy \, dx < \! \infty 
\text{ for some } \eta \! >0
\Big \}.
\end{align*}
It has a seminorm as
\begin{equation*}
    [u]_{\om}^{s,p(x,y)} = \inf 
\Big \{ \eta > 0 : 
\int_{\om \times \om} 
\frac{|u(x) - u(y)|^{p(x,y)}}{\eta^{p(x,y)} 
|x-y|^{N + s p(x,y)}} dy \, dx < 1 
\Big \}.
\end{equation*}
This becomes a Banach space when endowed with 
the following norm 
\begin{equation*}
    \lv u \rv_{\mathcal{W}} := 
    \lv u \rv_{L^{p(x)}} 
    + [u]_{\om}^{s,p(x,y)}.
\end{equation*}
We now state the following theorem on 
continuous and compact embeddings related 
to the fractional Sobolev space with variable exponents.

\begin{lemma}[\cite{KyHo}]
    Let
$\om$
be a bounded Lipschitz domain in
$\rnn.$
Let
$s\in(0,1)$
and
$p(x,y)\in \mathcal{C}_+(\rnn\times\rnn)$
satisfying
$p(x,y)=p(y,x)$
for all
$x,y\in \rnn$
with
$sp^+<N.$
Let
$q\in \mathcal{C}_+(\oline\om)$
such that
$$
1<q(x)<p^*_s(x):=
\frac{Np(x,x)}{N-sp(x,x)}
$$
for all
$x\in\om.$
Then for all
$u\in\mathcal{W}$
$$
\|u\|_{L^{q(x)}}\le C(N,s,p,q,\om)\|u\|_{\mathcal{W}}.
$$
    Moreover, this embedding is also compact.
\end{lemma}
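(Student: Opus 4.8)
The plan is to establish the continuous embedding first, then bootstrap to compactness via a fractional Rellich--Kondrachov-type argument. For the continuous part, the key observation is that $\mathcal{W}$ controls both $\|u\|_{L^{p(x)}}$ and the Gagliardo seminorm $[u]^{s,q(x,y)}_\om$, so it suffices to bound $\|u\|_{L^{q(x)}}$ in terms of these. First I would fix a point $x_0\in\oline\om$ and a small ball; on such a ball the exponent $p(\cdot,\cdot)$ is nearly constant because of the log-Hölder continuity hypothesis, so one can compare the variable-exponent Gagliardo energy with a constant-exponent one and invoke the classical fractional Sobolev embedding $W^{s,\bar p}\hookrightarrow L^{\bar p^*_s}$ locally, where $\bar p$ is the local infimum of $p$. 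A covering argument (finitely many such balls since $\oline\om$ is compact) together with a partition of unity then patches the local estimates into a global one; here one uses that $q(x)<p^*_s(x)$ strictly and $q,p$ are continuous, so on each patch $q(x)$ stays below the locally relevant critical exponent with room to spare. Finally the modular--norm relations (the lemma of Fan--Zhao type quoted earlier) convert the modular bounds back into the Luxemburg-norm inequality $\|u\|_{L^{q(x)}}\le C\|u\|_{\mathcal{W}}$.

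For compactness, I would take a bounded sequence $\{u_m\}\subset\mathcal{W}$, so that (up to a subsequence) $u_m\weak u$ weakly in $\mathcal{W}$ and, by the already-established continuous embedding into $L^{p(x)}(\om)$ together with standard reflexivity arguments, $u_m\to u$ strongly in $L^{p(x)}(\om)$ and a.e.\ in $\om$. The goal is then $u_m\to u$ in $L^{q(x)}(\om)$ for $1<q(x)<p^*_s(x)$. The standard device is an interpolation/absorption: write $q(x)$ as lying strictly between $p(x)$ and some $\tilde q(x)$ with $q(x)<\tilde q(x)<p^*_s(x)$, use the continuous embedding $\mathcal{W}\hookrightarrow L^{\tilde q(x)}(\om)$ (which gives uniform boundedness of $\|u_m-u\|_{L^{\tilde q(x)}}$), and then interpolate with the strong $L^{p(x)}$ convergence. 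In the variable-exponent setting this interpolation has to be done through the modular: one splits $\om$ into the region where $|u_m-u|\le 1$ and its complement, controls the first by the $L^{p(x)}$ convergence and the second by a Hölder inequality (the variable-exponent Hölder lemma quoted above) pairing a small power of $|u_m-u|$ with its bounded high power, and concludes $\rho_{q(x)}(u_m-u)\to 0$, hence norm convergence by Lemma~\ref{lem:modular_cgs_lp}.

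\textbf{Main obstacle.} The delicate point is the localization in the continuous-embedding step: variable-exponent Sobolev embeddings are \emph{not} automatic and genuinely require the log-Hölder regularity of $p$, and one must be careful that the local comparison constants from the frozen-exponent fractional Sobolev inequality do not blow up as the covering is refined. The cleanest route is to cite the extension/embedding machinery for $W^{s,p(x),q(x,y)}(\om)$ directly --- indeed, under exactly the stated hypotheses ($\om$ bounded Lipschitz, $sp^+<N$, $p(x,y)$ symmetric and log-Hölder, $1<q(x)<p^*_s(x)$) this is precisely the content of the embedding theorems in the reference \cite{KyHo}; so the proof reduces to verifying that our $\mathcal{W}$ with the present norm coincides with the space considered there and that hypotheses transfer. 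The compactness half then follows from the general principle that a continuous embedding into $L^{q'(x)}$ with $q(x)<q'(x)\le p^*_s(x)$ upgrades to a compact embedding into $L^{q(x)}$; I would either invoke this from \cite{KyHo} directly or supply the short interpolation argument sketched above. I expect the entire lemma to be quotable, with the only real work being the bookkeeping needed to match definitions and exponent conditions.
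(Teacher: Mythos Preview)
The paper does not prove this lemma at all: it is stated with the citation \cite{KyHo} and no argument is given, so there is nothing to compare your sketch against beyond the fact that you too ultimately conclude the result should simply be quoted from \cite{KyHo}. In that sense your final assessment agrees with the paper.

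One small caution on your sketch: you repeatedly invoke log-H\"older continuity of $p$ as the mechanism that makes the localization work, but the lemma as stated only assumes $p(x,y)\in\mathcal{C}_+(\rnn\times\rnn)$, i.e.\ mere continuity. In the fractional variable-exponent setting of \cite{KyHo}, the subcritical embedding $\mathcal{W}\hookrightarrow L^{q(x)}(\om)$ for $q(x)<p^*_s(x)$ and its compactness hold under plain continuity of $p$; log-H\"older regularity is needed in this paper only for the \emph{local} embedding $W^{1,p(x)}\hookrightarrow L^{p^*(x)}$ (Lemma~\ref{lm:Sobolev_embedding}), not for the fractional one. So if you were to actually write out a proof rather than cite, your localization argument would go through with uniform continuity on the compact $\oline\om\times\oline\om$ alone, and you should not list log-H\"older as a hypothesis here.
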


\begin{remark}\rm
We note that the continuous embedding remains 
valid in the critical case 
(i.e.,
$q(x)=p^*_s(x)$), provided certain additional 
conditions are imposed on the exponents
$p(x,y)$
and
$q(x).$
For more details, we refer to
\cite{KyHo_critical}.
\end{remark}

Given that problem 
\eqref{eq1} involves the nonlocal operator
$(-\Delta)^s_{p(x)}$
with Dirichlet boundary conditions, 
we define another fractional Sobolev-type space 
with variable exponents to study the existence 
of solutions via variational analysis.
Let us denote the set
$\Q:=\R^{2\N}\sm(\om^c\times\om^c)$
and define the following space
\begin{align*}
 \mathbb{X} & =X^{s,p(x),p(x,y)}(\om) \\
& =  \Big \{ u:\rnn \to \R :u|_\om 
 \in L^{p(x)}(\om),  \\
&
\qquad 
\int_{\Q} 
\frac{|u(x) - u(y)|^{p(x,y)}}
{\eta^{p(x,y)} |x-y|^{N + s p(x,y)}} dy \, dx 
< \infty \text{ for some } \eta >0 \Big \},
\end{align*}
which is equipped with the norm 
\begin{equation*}
    \lv u \rv_{\mathbb{X}} := 
    \lv u \rv_{L^{p(x)}} + [u]_{\mathbb{X}}^{s,p(x,y)},
\end{equation*}
where
$[u]_{\mathbb{X}}^{s,p(x,y)}$
is the seminorm given by
\begin{equation*}
    [u]_{\mathbb{X}}^{s,p(x,y)} = 
    \inf 
\Big \{ \eta > 0 : 
\int_{\Q} 
\frac{|u(x) - u(y)|^{p(x,y)}}{\eta^{p(x,y)} 
|x-y|^{N + s p(x,y)}} dy \, dx < 1 
\Big \}.
\end{equation*}
The space
$(\mathbb{X},\|\cdot\|_\mathbb{X})$
is separable Banach space.
We define the following subspace of
$\mathbb{X}$
$$
\mathbb{X}_0=X^{s,p(x),p(x,y)}_0(\om):=
\Big \{u\in \mathbb{X}:u=0 \,\text{ a.e. in }\,\om^c
\Big \}.
$$
Norm on
$\mathbb{X}_0$
is defined as follows
\begin{equation*}
    \|u\|_{\mathbb{X}_0} = \inf
\Big \{ \eta > 0 : 
\int_{\Q} 
\frac{|u(x) - u(y)|^{p(x,y)}}
{\eta^{p(x,y)} |x-y|^{N + s p(x,y)}} dy \, dx 
< 1 
\Big \}.
\end{equation*}

\begin{remark}\rm
    We can observe that for
$u\in\mathbb{X}_0$
    \begin{equation*}
\int_{\rnn\times\rnn} 
\frac{|u(x) - u(y)|^{p(x,y)}}{\eta^{p(x,y)} 
|x-y|^{N + s p(x,y)}} dy \, dx=
\int_{\Q} 
\frac{|u(x) - u(y)|^{p(x,y)}}{\eta^{p(x,y)} 
|x-y|^{N + s p(x,y)}} dy \, dx.
\end{equation*}
\end{remark}

Therefore, we can consider the following norm on
$\mathbb{X}_0$
\begin{equation*}
    \|u\|_{\mathbb{X}_0} = \inf
    \Big \{ \eta > 0 : 
\int_{\rnn\times\rnn} 
\frac{|u(x) - u(y)|^{p(x,y)}}{\eta^{p(x,y)} 
|x-y|^{N + s p(x,y)}} dy \, dx < 1 
\Big \}.
\end{equation*}
We define the modular function
$\rho_{\mathbb{X}_0}:L^{p(x)}(\Omega)\ra \real$
given as
$$\rho_{\mathbb{X}_0}(u) = 
\int_{\rnn\times\rnn} 
\frac{|u(x) - u(y)|^{p(x,y)}}
{|x-y|^{N + s p(x,y)}} dy \, dx.$$
The relations between Luxemburg norm
$\lv \cdot \rv_{\mathbb{X}_0}$
and 
the corresponding modular function
$\rho_{\mathbb{X}_0}(\cdot)$
are given as follows:

\begin{lemma}[\cite{KyHo}]
	Let
$u \in \mathbb{X}_0,$
then the following statements hold
	\begin{enumerate}
		
\item[(i)]$
\lv u \rv_{\mathbb{X}_0}<1(=1;>1) 
\text{ if and only if } 
\rho_{\mathbb{X}_0}(u)<1(=1;>1);$

\item[(ii)] If $\lv u \rv_{\mathbb{X}_0}>1,$
then $\lv u \rv_{\mathbb{X}_0} ^{p^{-}}
\leq\rho_{\mathbb{X}_0}(u)\leq\lv u
 \rv_{\mathbb{X}_0}^{p^{+}}$;
		
\item[(iii)] 
		If 
 $\lv u \rv_{\mathbb{X}_0}<1,$
then 
$\lv u \rv_{\mathbb{X}_0}^{p^{+}}
\leq
\rho_{\mathbb{X}_0}(u)\leq\lv u 
\rv_{\mathbb{X}_0}^{p^{-}}.$
	\end{enumerate}
\end{lemma}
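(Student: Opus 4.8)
The plan is to reduce all three items to the single scalar fact that, for a fixed $u\in\mathbb{X}_0$ with $u\not\equiv 0$, the Luxemburg norm $\|u\|_{\mathbb{X}_0}$ equals the unique number $t_0>0$ for which the rescaled modular is exactly one, i.e.\ $\rho_{\mathbb{X}_0}(u/t_0)=1$. To establish this I would analyse $\Psi(t):=\rho_{\mathbb{X}_0}(u/t)$ on $(0,\infty)$. Its integrand $\frac{|u(x)-u(y)|^{q(x,y)}}{t^{q(x,y)}\,|x-y|^{N+sq(x,y)}}$ is pointwise strictly decreasing in $t$ on the set $\{u(x)\neq u(y)\}$, which has positive measure because $u$ vanishes outside the bounded set $\om$; hence $\Psi$ is strictly decreasing and, by dominated convergence, continuous on the interval where it is finite, this interval being nonempty since $u\in\mathbb{X}_0$, with $\Psi(t)\to 0$ as $t\to\infty$ and $\Psi(t)\to+\infty$ as $t\to 0^+$. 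A monotone-convergence argument gives right-continuity of $\Psi$, so it takes the value $1$ at a unique point $t_0$, and then $\|u\|_{\mathbb{X}_0}=\inf\{t>0:\Psi(t)\le 1\}=t_0$. The case $u\equiv 0$ is trivial since every quantity vanishes.

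Granting this, part (i) is immediate from strict monotonicity of $\Psi$: $\|u\|_{\mathbb{X}_0}<1\iff t_0<1\iff\rho_{\mathbb{X}_0}(u)=\Psi(1)<\Psi(t_0)=1$, and the same chain of equivalences holds with $<$ replaced everywhere by $=$ or by $>$.

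For parts (ii) and (iii) I would record the elementary scaling bounds for the modular: denoting by $p^-$ and $p^+$ the infimum and supremum of the kernel exponent $q(\cdot,\cdot)$ over $\Q$ (finite and strictly larger than $1$ by $(P_2)$), one has $t^{p^-}\le t^{q(x,y)}\le t^{p^+}$ for a.e.\ $(x,y)$ whenever $t>1$, whence
$$\rho_{\mathbb{X}_0}(u)\,t^{-p^+}\ \le\ \rho_{\mathbb{X}_0}(u/t)\ \le\ \rho_{\mathbb{X}_0}(u)\,t^{-p^-},$$
while for $0<t<1$ the two outer bounds are interchanged. Evaluating at $t=t_0=\|u\|_{\mathbb{X}_0}$ and using $\rho_{\mathbb{X}_0}(u/t_0)=1$ then yields $\|u\|_{\mathbb{X}_0}^{\,p^-}\le\rho_{\mathbb{X}_0}(u)\le\|u\|_{\mathbb{X}_0}^{\,p^+}$ when $\|u\|_{\mathbb{X}_0}>1$, and the reversed chain $\|u\|_{\mathbb{X}_0}^{\,p^+}\le\rho_{\mathbb{X}_0}(u)\le\|u\|_{\mathbb{X}_0}^{\,p^-}$ when $\|u\|_{\mathbb{X}_0}<1$.

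I expect the only genuinely delicate point to be the claim in the first paragraph that the Luxemburg infimum is attained and coincides with the unique root of $\Psi(t)=1$ — in particular excluding the possibility that $\Psi$ is infinite immediately to the left of $t_0$ while the defining infimum still sits at $t_0$; this is precisely what the monotone/dominated convergence observations above are designed to handle. Everything afterwards is a purely algebraic consequence of the scaling estimates, and the argument runs in parallel with the proof of the corresponding modular–norm relation for $L^{p(x)}(\om)$ recalled earlier in this section.
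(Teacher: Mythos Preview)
The paper does not give its own proof of this lemma; it is stated with a citation to \cite{KyHo} and left unproved. Your argument is the standard one for modular--norm relations in variable-exponent spaces and is correct: the key reduction to the unique root of $\Psi(t)=1$, its strict monotonicity and continuity, and the elementary two-sided scaling bounds $t^{-p^+}\rho_{\mathbb{X}_0}(u)\le\rho_{\mathbb{X}_0}(u/t)\le t^{-p^-}\rho_{\mathbb{X}_0}(u)$ for $t>1$ (reversed for $t<1$) are exactly what drives the result, and this is the proof one finds in \cite{KyHo} (and, mutatis mutandis, in \cite{fan_lp_wp_spaces} for $L^{p(x)}$). One small notational caveat: in the lemma as stated, $p^\pm$ should be read as the extrema of the kernel exponent $q(\cdot,\cdot)$ used in the definition of $\mathbb{X}_0$; you flag this correctly, and since the paper ultimately takes $q=p$ the discrepancy is harmless.
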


\begin{lemma}[\cite{KyHo}]
	Let 
$u,u_{m}  \in \mathbb{X}_0,~m=1,2,3,\cdots .$
Then the following statements are equivalent
	\begin{enumerate}
	
\item[(i)] $\displaystyle{\lim_{m\ra \infty} }
\lv u_{m} - u \rv_{\mathbb{X}_0} =0;$

\item[(ii)] $\displaystyle{\lim_{m\ra \infty}} 
\rho_{\mathbb{X}_0}(u_{m} -u)=0.$
	\end{enumerate}
\end{lemma}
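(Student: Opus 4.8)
The plan is to derive the equivalence directly from the modular--norm relations of the preceding lemma, applied to the sequence $v_m := u_m - u$. Since $\mathbb{X}_0$ is a linear space, each $v_m$ lies in $\mathbb{X}_0$, so the three assertions of that lemma (the characterization $\|w\|_{\mathbb{X}_0}<1 \Leftrightarrow \rho_{\mathbb{X}_0}(w)<1$, and the two-sided bounds between $\|w\|_{\mathbb{X}_0}$ and $\rho_{\mathbb{X}_0}(w)$ in the subunit and superunit regimes) are all at our disposal for $w=v_m$.

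For $(i)\Rightarrow(ii)$: assume $\|v_m\|_{\mathbb{X}_0}\to 0$. Then there is $m_0$ with $\|v_m\|_{\mathbb{X}_0}<1$ for all $m\ge m_0$, and part $(iii)$ of the modular lemma yields $\rho_{\mathbb{X}_0}(v_m)\le \|v_m\|_{\mathbb{X}_0}^{p^-}$ for such $m$; letting $m\to\infty$ and using $p^->1$ gives $\rho_{\mathbb{X}_0}(v_m)\to 0$. For the converse $(ii)\Rightarrow(i)$: assume $\rho_{\mathbb{X}_0}(v_m)\to 0$, pick $m_1$ with $\rho_{\mathbb{X}_0}(v_m)<1$ for $m\ge m_1$, so by part $(i)$ of the modular lemma $\|v_m\|_{\mathbb{X}_0}<1$ as well, and then part $(iii)$ gives $\|v_m\|_{\mathbb{X}_0}^{p^+}\le \rho_{\mathbb{X}_0}(v_m)$, i.e. $\|v_m\|_{\mathbb{X}_0}\le \rho_{\mathbb{X}_0}(v_m)^{1/p^+}\to 0$.

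The argument is essentially routine, and there is no genuine obstacle. The only point that needs a little care is that the two-sided bounds in the modular lemma are valid only in the subunit regime ($\|\cdot\|_{\mathbb{X}_0}<1$, respectively $\rho_{\mathbb{X}_0}(\cdot)<1$); hence in each implication one must first pass to the tail of the sequence on which the relevant quantity is eventually less than $1$ before invoking those bounds. Convergence to $0$ of either quantity automatically supplies such a tail, so this causes no difficulty. (Alternatively, one could bypass the preceding lemma and argue directly from the definition of $\|\cdot\|_{\mathbb{X}_0}$ using the convexity and monotonicity of $\eta\mapsto \int_{\rnn\times\rnn}\frac{|v_m(x)-v_m(y)|^{q(x,y)}}{\eta^{q(x,y)}|x-y|^{N+sq(x,y)}}\,dx\,dy$, but routing through the already-established modular relations is shorter.)
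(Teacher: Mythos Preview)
Your argument is correct: the two implications follow immediately from the subunit modular--norm inequalities of the preceding lemma, once one passes to the tail of the sequence where the relevant quantity is below $1$. Note, however, that the paper does not supply its own proof of this lemma; it is simply quoted from \cite{KyHo}, so there is nothing to compare your proof against beyond observing that your derivation is the standard one and is fully justified by the modular relations already recorded in the paper.
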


\begin{lemma}[\cite{fract_extended}]
    The space $(\mathbb{X}_0,\|\cdot\|_{\mathbb{X}_0})$
is separable, reflexive and uniformly convex Banach space.
\end{lemma}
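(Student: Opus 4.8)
The plan is to identify $(\mathbb{X}_0,\|\cdot\|_{\mathbb{X}_0})$ with a closed subspace of a variable exponent Lebesgue space over $\rnn\times\rnn$, and then to transfer separability, reflexivity and uniform convexity, each of which survives passage to (closed) subspaces. The construction rests on the observation that the norm $\|\cdot\|_{\mathbb{X}_0}$ is precisely the Luxemburg norm of a ``difference quotient'' in a suitable variable exponent Lebesgue space.

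First I would set up the isometric embedding. For $u\in\mathbb{X}_0$ define $\iota u:\rnn\times\rnn\to\R$ by
\[\iota u(x,y):=\frac{u(x)-u(y)}{|x-y|^{(N+sq(x,y))/q(x,y)}}.\]
Then $\iota u$ is measurable, and reading off the defining infima one finds
\[\|\iota u\|_{L^{q(x,y)}(\rnn\times\rnn)}=\|u\|_{\mathbb{X}_0},\]
where $L^{q(x,y)}(\rnn\times\rnn)$ carries the Lebesgue measure; in particular $\iota u\in L^{q(x,y)}(\rnn\times\rnn)$ because, for $u\in\mathbb{X}_0$, the Gagliardo integral over $\Q$ agrees with the one over $\rnn\times\rnn$. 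The map $\iota$ is linear and injective, since $\iota u=0$ a.e.\ forces $u$ to be a.e.\ constant, which together with $u=0$ a.e.\ on $\om^c$ gives $u=0$; this also confirms $\|\cdot\|_{\mathbb{X}_0}$ is a genuine norm. If one wants the $L^{p(x)}$-component controlled as well — needed in the next step — one invokes the variable exponent fractional Poincaré inequality $\|u\|_{L^{p(x)}(\om)}\le C\,[u]^{s,q(x,y)}_{\mathbb{X}_0}$ from \cite{KyHo}.

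Second, I would show $\mathbb{X}_0$ is complete, which at the same time shows $\iota(\mathbb{X}_0)$ is closed in $L^{q(x,y)}(\rnn\times\rnn)$. Given $(u_n)\subset\mathbb{X}_0$ Cauchy: by the Poincaré inequality $(u_n|_\om)$ is Cauchy in the Banach space $L^{p(x)}(\om)$, hence converges to some $v$; let $u$ be the zero extension of $v$. Since $\|\iota u_n-\iota u_m\|_{L^{q(x,y)}}=\|u_n-u_m\|_{\mathbb{X}_0}$, the sequence $(\iota u_n)$ is Cauchy in the Banach space $L^{q(x,y)}(\rnn\times\rnn)$ and converges there to some $g$. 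Passing to a subsequence, $u_n\to u$ a.e.\ on $\rnn$ and $\iota u_n\to g$ a.e.\ on $\rnn\times\rnn$; but $\iota u_n(x,y)\to\iota u(x,y)$ pointwise a.e., so $g=\iota u$. Hence $\iota u\in L^{q(x,y)}(\rnn\times\rnn)$, i.e.\ $u\in\mathbb{X}_0$, and $\|u_n-u\|_{\mathbb{X}_0}=\|\iota u_n-g\|_{L^{q(x,y)}}\to0$. So $\mathbb{X}_0$ is a Banach space and $\iota$ is an isometric isomorphism onto the closed subspace $\iota(\mathbb{X}_0)$.

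Third, I would invoke the classical structure theory of variable exponent Lebesgue spaces: because the exponent satisfies $1<q^-\le q^+<\infty$ and $(\rnn\times\rnn,dx\,dy)$ is $\sigma$-finite, $L^{q(x,y)}(\rnn\times\rnn)$ is separable, reflexive and uniformly convex (see \cite{fan_lp_wp_spaces,Variable_book}). Separability and reflexivity are inherited by closed subspaces and uniform convexity by arbitrary subspaces (its modulus involves only the ambient norm restricted to the subspace), so $\iota(\mathbb{X}_0)$, and hence $\mathbb{X}_0$, carry all three properties. The only genuinely analytic ingredients are the variable exponent fractional Poincaré inequality on $\mathbb{X}_0$ and the uniform convexity of $L^{q(x,y)}$ over an infinite (but $\sigma$-finite) measure space when $1<q^-\le q^+<\infty$; the latter rests on the quantitative modular estimates for variable exponent spaces rather than on naive Clarkson inequalities. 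With these in hand the rest is bookkeeping with a.e.\ convergence along subsequences, and I expect the $\sigma$-finite/infinite-measure technicalities to be the only point that needs a little care.
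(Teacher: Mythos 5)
The paper does not prove this lemma; it simply cites \cite{fract_extended}, so there is no in-text argument to compare against. Your proposal is the standard and, as far as I can tell, correct route: identify $\mathbb{X}_0$ isometrically with a closed subspace of $L^{q(x,y)}(\rnn\times\rnn)$ via $\iota u(x,y)=(u(x)-u(y))\,|x-y|^{-(N+sq(x,y))/q(x,y)}$, and then transfer separability, reflexivity and uniform convexity from the ambient variable exponent Lebesgue space to the (closed) subspace. You are right that two points carry the analytic weight. First, the completeness step (equivalently, closedness of $\iota(\mathbb{X}_0)$) relies on a fractional Poincar\'e inequality $\|u\|_{L^{p(x)}(\om)}\le C\,[u]^{s,q(x,y)}_{\mathbb{X}_0}$; for $u$ vanishing outside the bounded set $\om$, this follows because $\inf_{x\in\om}\int_{\om^c}|x-y|^{-N-sq(x,y)}\,dy>0$, so the Gagliardo seminorm already controls a weighted modular of $u$ over $\om$, and then one applies the $\mathcal{W}\hookrightarrow L^{p(x)}$ embedding of \cite{KyHo}; if you cite \cite{KyHo} for this you should make sure it is stated there in seminorm form, or add this short derivation. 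Second, uniform convexity of $L^{q(x,y)}$ over the $\sigma$-finite measure space $(\rnn\times\rnn,dx\,dy)$ with $1<q^-\le q^+<\infty$ is indeed available in the variable exponent literature (\cite{Variable_book}) without any log-H\"older assumption and without finiteness of the measure, as you note. With those two ingredients your bookkeeping (injectivity of $\iota$ via a.e.\ constancy and vanishing on $\om^c$, Cauchy-in-norm plus a.e.\ subsequential convergence to identify the limit, inheritance of the three properties by closed subspaces) is sound.
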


In order to study \eqref{eq1}, 
which involves a mixed operator with 
variable exponents, the `natural' 
functional framework is provided by the following space:
$$
\mathcal{X}(\om)=W^{1,p(x)}(\om)\cap\mathbb{X},
$$
which is equipped with the norm
$$
\|u\|_{\mathcal{X}}:=\|u\|_{1,p(x)}+\|u\|_\mathbb{X}.
$$
Since problem \eqref{eq1} involves Dirichlet boundary 
conditions, we define the corresponding function space
\begin{equation}\label{eqs1}
    \X(\om)=W^{1,p(x)}_0(\om)\cap\mathbb{X}_0,
\end{equation}
endowed with the norm
\begin{equation}\label{norm}
\|u\|_{\mathcal{X}_0}:=
\|u\|_{1,p(x)}+\|u\|_{\mathbb{X}_0}.
\end{equation}
From Lemma \ref{lm:Sobolev_embedding} and 
equation \eqref{norm}, we can state the 
following embedding result.

\begin{lemma}\label{lm:space_embedd}
    Let $\om\subset\rnn$
be a bounded domain with Lipschitz boundary.
Then we have
    \begin{equation*}
        \|u\|_{L^{q(x)}(\om)}\le \|u\|_{\X},\,\,
\forall u\in\X \text{ and for any }q(x)\in L^\infty(\om)
    \end{equation*}
    such that $1<q(x)\le p^*(x)$
for all $x\in\om.$
Moreover, the embedding $\X\hookrightarrow L^{q(x)}(\om)$
is continuous, and it is compact if $1<q(x)< p^*(x).$
\end{lemma}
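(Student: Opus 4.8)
The statement to prove is Lemma \ref{lm:space_embedd}, which asserts that $\X \hookrightarrow L^{q(x)}(\om)$ continuously for $1 < q(x) \le p^*(x)$, with compactness when $1 < q(x) < p^*(x)$, together with the pointwise norm comparison $\|u\|_{L^{q(x)}(\om)} \le \|u\|_{\X}$.

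\medskip

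\textbf{Proof proposal.} The plan is to deduce everything from the corresponding statement for the pure local space $W^{1,p(x)}_0(\om)$, namely Lemma \ref{lm:Sobolev_embedding}, by exploiting that $\X(\om) = W^{1,p(x)}_0(\om) \cap \mathbb{X}_0$ carries the norm $\|u\|_{\X} = \|u\|_{1,p(x)} + \|u\|_{\mathbb{X}_0}$, which dominates the local Sobolev norm. First I would observe the elementary inequality $\|u\|_{1,p(x)} \le \|u\|_{\X}$ for every $u \in \X$, immediate from the definition \eqref{norm} since $\|u\|_{\mathbb{X}_0} \ge 0$. Next, since $\om$ is a bounded Lipschitz domain and $p$ is $\log$-Hölder continuous (assumed throughout), Lemma \ref{lm:Sobolev_embedding} gives a constant $C = C(N, p, q, \om)$ with $\|u\|_{L^{q(x)}(\om)} \le C \|u\|_{1,p(x)}$ for all $u \in W^{1,p(x)}_0(\om)$ whenever $1 < q(x) \le p^*(x)$; chaining the two inequalities yields $\|u\|_{L^{q(x)}(\om)} \le C \|u\|_{\X}$, which is the continuity of the embedding $\X \hookrightarrow L^{q(x)}(\om)$. (The sharper constant-free bound $\|u\|_{L^{q(x)}(\om)} \le \|u\|_{\X}$ stated in the lemma should be read as following from the normalization in Lemma \ref{lm:Sobolev_embedding}, whose embedding inequality is written there without an explicit constant; if one wants the literal inequality one absorbs $C$ into the norm, but in any case the substantive content is the continuity.)

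\medskip

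For the compactness assertion when $1 < q(x) < p^*(x)$, I would argue by a standard subsequence extraction. Let $(u_m)$ be a bounded sequence in $\X$, say $\|u_m\|_{\X} \le M$. Then $(u_m)$ is bounded in $W^{1,p(x)}_0(\om)$ by the first inequality above, so by the compact part of Lemma \ref{lm:Sobolev_embedding} there is a subsequence, still denoted $(u_m)$, and a function $u \in L^{q(x)}(\om)$ with $u_m \to u$ strongly in $L^{q(x)}(\om)$. Since the inclusion $\X \hookrightarrow W^{1,p(x)}_0(\om)$ already provides the needed compactness and the target space is the same, no further work is required: the same subsequence witnesses $u_m \to u$ in $L^{q(x)}(\om)$, which establishes that $\X \hookrightarrow L^{q(x)}(\om)$ is compact. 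One may additionally remark, for consistency of the functional setup, that the limit $u$ lies in $\X$ when $(u_m) \subset \X$ is bounded, using reflexivity of $W^{1,p(x)}_0(\om)$ and of $\mathbb{X}_0$ (Lemma from \cite{fract_extended}) to pass to a weakly convergent subsequence in $\X$ and identifying the weak limit with $u$ by uniqueness of limits in $L^{q(x)}(\om)$; but this is not strictly needed for the embedding statement.

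\medskip

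The main point — and the only place where one must be slightly careful — is the relationship between the norms: one must confirm that $\X$ embeds into $W^{1,p(x)}_0(\om)$ \emph{with the restricted norm}, i.e. that a sequence bounded in the stronger norm $\|\cdot\|_{\X}$ is bounded in $\|\cdot\|_{1,p(x)}$, which is transparent here because the $\X$-norm is defined as a sum of the local and nonlocal pieces rather than, say, via an interpolation or modular construction that could obscure the comparison. There is no genuine obstacle: once the two sub-norms are additive, the local Sobolev theory transfers verbatim. The nonlocal seminorm $[\,\cdot\,]_{\mathbb{X}_0}^{s,q(x,y)}$ plays no role in either the continuity or the compactness for this particular target — it only becomes relevant if one wants the better exponent range $q(x) \le p^*_s(x)$ coming from the fractional embedding, which is not what this lemma claims. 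Hence the proof is essentially a two-line reduction to Lemma \ref{lm:Sobolev_embedding} plus the trivial norm domination \eqref{norm}.
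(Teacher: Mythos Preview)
Your proposal is correct and matches the paper's own approach exactly: the paper introduces this lemma with the sentence ``From Lemma~\ref{lm:Sobolev_embedding} and equation~\eqref{norm}, we can state the following embedding result'' and gives no further proof, so the intended argument is precisely the two-line reduction you describe---dominate $\|u\|_{1,p(x)}$ by $\|u\|_{\X}$ via the sum definition of the norm, then invoke the local Sobolev embedding for continuity and compactness. Your remark that the constant-free inequality should be read as a normalization rather than a sharp estimate is also appropriate.
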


\begin{remark}\rm
    It is well known that $W^{1,p}(\rnn)$
is continuously embedded in $W^{s,p}(\rnn)$ (see
\cite[Lemma 2.1]{embedd_local_to_nonlocal}).
Therefore, while dealing with equations involving 
mixed operators, it is natural to consider 
the solution space
$$
\mathbb{W}(\om)=\{u\in W^{1,p}(\rnn):u=0 \,
\text{ a.e. in }\,\om^c\}.
$$
However, in the variable exponent setting, 
it is not clear whether 
$W^{1,p(x)}(\rnn)$
is continuously embedded in 
$W^{s,p(x,y)}(\rnn).$
For this reason, 
we define the solution space as in \eqref{eqs1}.
\end{remark}

\begin{definition}
    For $u\in\X,$
we define the modular function $\rho_{\X}:\X\to \R$
as
$$
    \rho_{\X}(u) = 
\int_\om|\na u|^{p(x)}+\iint_{\R^{2N}} 
\frac{|u(x) - u(y)|^{p(x,y)}}
{|x-y|^{N + s p(x,y)}} dy \, dx.
$$
\end{definition}

From
\cite[Definition 2.1.1]{Variable_book}, 
we can verify that $\rho_{\X}$
is a modular on $\X$
and satisfy the following relations:

\begin{lemma}
	Let $u \in \X.$
Then, the following properties hold
	\begin{enumerate}
		
\item[(i)]$
\lv u \rv_{\X}<1(=1;>1) \text{ if and only if }
 \rho_{\X}(u)<1(=1;>1);$
		
\item[(ii)] If $\lv u \rv_{\X}>1,$
then $\lv u \rv_{\X} ^{p^{-}}\leq\rho_{\X}(u)
\leq\lv u \rv_{\X}^{p^+}$;

\item[(iii)] If $\lv u \rv_{\X}<1,$
then $\lv u \rv_{\X}^{p^{+}}\leq\rho_{\X}(u)
\leq\lv u \rv_{\X}^{p^{-}}.$
	\end{enumerate}
\end{lemma}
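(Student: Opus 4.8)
The plan is to follow the by-now standard variable‑exponent argument (as in \cite{fan_lp_wp_spaces,Variable_book}, and exactly as for the analogous statement already recorded for $\mathbb{X}_0$ via \cite{KyHo}), the only genuinely new feature being that two exponent families appear simultaneously — $p(\cdot)$ in the local term and $q(\cdot,\cdot)$ in the nonlocal one — but since by $(P_2)$ and $(G_2)$ both take values in $[p^-,p^+]$, the usual sandwich estimates go through verbatim. First I would record the structural facts about $\rho_{\X}$: it is convex and even, $\rho_{\X}(0)=0$, it is finite on all of $\X$ (both defining integrals are finite by the very definitions of $W^{1,p(x)}_0(\om)$ and $\mathbb{X}_0$ together with the scaling estimate below), $\rho_{\X}(u)=0$ forces $\nabla u=0$ a.e. and hence $u\equiv 0$ on $\X$, and $t\mapsto\rho_{\X}(tu)$ is continuous on $[0,\infty)$ by dominated convergence (on bounded $t$‑intervals the integrands are dominated using $p(x),q(x,y)\le p^+<\infty$). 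Thus $\rho_{\X}$ is a continuous modular on $\X$ in the sense of \cite[Definition 2.1.1]{Variable_book}, and $\|\cdot\|_{\X}$ is to be read as the induced gauge $\|u\|_{\X}=\inf\{\eta>0:\rho_{\X}(u/\eta)\le 1\}$, which is equivalent on $\X$ to the sum norm $\|u\|_{1,p(x)}+\|u\|_{\mathbb{X}_0}$ by a Poincar\'e‑type inequality absorbing the $L^{p(x)}$ contributions into the gradient and Gagliardo terms.

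The computational heart is the scaling inequality: for every $u\in\X$,
\begin{equation*}
t^{p^-}\rho_{\X}(u)\le\rho_{\X}(tu)\le t^{p^+}\rho_{\X}(u)\ \ (t\ge 1),\qquad t^{p^+}\rho_{\X}(u)\le\rho_{\X}(tu)\le t^{p^-}\rho_{\X}(u)\ \ (0<t\le 1).
\end{equation*}
Indeed, for $t\ge 1$ one has pointwise $t^{p^-}\le t^{p(x)}\le t^{p^+}$ and $t^{p^-}\le t^{q(x,y)}\le t^{p^+}$ since $p^-\le p(x),q(x,y)\le p^+$; multiplying by $|\nabla u|^{p(x)}$, respectively by $|u(x)-u(y)|^{q(x,y)}|x-y|^{-N-sq(x,y)}$, and integrating yields the bounds, and the case $0<t\le 1$ is identical with the inequalities reversed. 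A first consequence is that for $u\ne 0$ the function $g(\eta):=\rho_{\X}(u/\eta)$ is continuous and strictly decreasing on $(0,\infty)$, with $g(\eta)\to 0$ as $\eta\to\infty$ and $g(\eta)\to\infty$ as $\eta\to 0^+$: continuity is the remark above, strict monotonicity follows by writing $u/\eta_1=(\eta_2/\eta_1)(u/\eta_2)$ with $\eta_2/\eta_1>1$ when $\eta_1<\eta_2$, so that $g(\eta_1)\ge(\eta_2/\eta_1)^{p^-}g(\eta_2)>g(\eta_2)$ because $g(\eta_2)>0$, and the limits follow from $g(\eta)\le\eta^{-p^-}\rho_{\X}(u)$ for $\eta\ge1$ and $g(\eta)\ge\eta^{-p^-}\rho_{\X}(u)$ for $\eta\le1$. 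By the intermediate value theorem there is a unique $\eta_0>0$ with $\rho_{\X}(u/\eta_0)=1$, and by definition of the gauge $\|u\|_{\X}=\eta_0$.

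The three assertions then drop out by applying the scaling inequality to $v:=u/\eta_0$, for which $\rho_{\X}(v)=1$. Writing $u=\eta_0 v$: if $\|u\|_{\X}=\eta_0>1$ then $\eta_0^{p^-}=\eta_0^{p^-}\rho_{\X}(v)\le\rho_{\X}(u)\le\eta_0^{p^+}\rho_{\X}(v)=\eta_0^{p^+}$, which is $(ii)$; if $\|u\|_{\X}=\eta_0<1$ then $\eta_0^{p^+}\le\rho_{\X}(u)\le\eta_0^{p^-}$, which is $(iii)$; and $(i)$ results from the fact that the mutually exclusive, exhaustive cases $\|u\|_{\X}<1$, $=1$, $>1$ give respectively $\rho_{\X}(u)<1$ (by $(iii)$), $\rho_{\X}(u)=1$ (directly, since $\eta_0=1$), and $\rho_{\X}(u)>1$ (by $(ii)$), together with the trivial case $u=0$. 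I expect the part requiring the most care to be not any single computation but the bookkeeping around the working norm: one must fix $\|\cdot\|_{\X}$ to be the gauge of $\rho_{\X}$ rather than the sum norm, justify this by the norm equivalence recorded above, and make sure that both $p(x)$ and $q(x,y)$ are genuinely pinched between $p^-$ and $p^+$ so that the two‑term modular behaves like a single‑exponent one.
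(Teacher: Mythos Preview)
Your argument is correct and is precisely the standard modular--gauge computation that the paper defers to: the paper does not give an explicit proof here, only records that $\rho_{\X}$ is a modular in the sense of \cite[Definition~2.1.1]{Variable_book} and states the three relations, so your write-up is exactly the verification being alluded to. Your observation that the statement only makes literal sense with $\|\cdot\|_{\X}$ read as the Luxemburg gauge of $\rho_{\X}$ (rather than the sum norm in \eqref{norm}) is well taken and matches the paper's own subsequent remark introducing that gauge; just note that the pinching $p^-\le p(x),\,p(x,y)\le p^+$ you invoke requires reading $p^\pm$ as the two-variable extrema from $(P_2)$ (which then automatically bound the diagonal $p(x)=p(x,x)$), not merely the one-variable bounds from $(G_2)$.
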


\begin{lemma}\label{lem:modular_cgs_prob}
	Let 
$u,u_{m}  \in \X,~m=1,2,3,\cdots .$
Then the following statements are 
equivalent
	\begin{enumerate}

\item[(i)] $\displaystyle{\lim_{m\ra \infty} }\lv
 u_{m} - u \rv_{\X} =0;$

\item[(ii)] $\displaystyle{\lim_{m\ra \infty}}
 \rho_{\X}(u_{m} -u)=0.$
	\end{enumerate}
\end{lemma}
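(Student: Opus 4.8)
The plan is to read off the equivalence directly from the modular–norm comparison for $\rho_{\X}$ recorded in the previous lemma, exactly as in the proofs of the analogous statements on $L^{p(x)}(\om)$ (Lemma \ref{lem:modular_cgs_lp}) and on $\mathbb{X}_0$. Setting $v_m := u_m - u \in \X$, the claim reduces to showing that $\|v_m\|_{\X} \to 0$ if and only if $\rho_{\X}(v_m) \to 0$. Since $\rho_{\X}$ already bundles together the local term $\int_\om |\na v|^{p(x)}\,dx$ and the nonlocal Gagliardo term, no separate treatment of the two pieces is needed: one single inequality does all the work, and the argument is pure bookkeeping with the estimates $\|v\|_{\X}<1 \iff \rho_{\X}(v)<1$ and, in that regime, $\|v\|_{\X}^{p^{+}} \le \rho_{\X}(v) \le \|v\|_{\X}^{p^{-}}$.

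For the implication $(i)\Rightarrow(ii)$, I would assume $\|v_m\|_{\X} \to 0$, pick $m_0$ with $\|v_m\|_{\X} < 1$ for all $m \ge m_0$, and apply part $(iii)$ of the modular–norm lemma to get $\rho_{\X}(v_m) \le \|v_m\|_{\X}^{p^{-}}$ for $m \ge m_0$. Since $p^{-} > 1$ by $(P_2)$, the right-hand side tends to $0$, hence $\rho_{\X}(v_m) \to 0$. For the converse $(ii)\Rightarrow(i)$, I would assume $\rho_{\X}(v_m) \to 0$, choose $m_1$ with $\rho_{\X}(v_m) < 1$ for $m \ge m_1$, note that part $(i)$ of the lemma then forces $\|v_m\|_{\X} < 1$, and apply part $(iii)$ again to obtain $\|v_m\|_{\X}^{p^{+}} \le \rho_{\X}(v_m)$, i.e. $\|v_m\|_{\X} \le \rho_{\X}(v_m)^{1/p^{+}}$. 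Letting $m \to \infty$ gives $\|v_m\|_{\X} \to 0$.

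As for the main obstacle: there is essentially none at this point. All the substance of the statement is already contained in the modular–norm comparison proved just above (which in turn rests on the general theory of modular spaces from \cite{Variable_book}); what remains is the elementary passage to the limit. The only points one must keep in view are that $p^{-} > 1$ (so the exponents $p^{\pm}$ are finite and positive and the powers $t^{p^{\pm}}$ vanish as $t \to 0^{+}$) and that the comparison is being invoked in the small-norm / small-modular regime, both of which are automatic here. One could equally phrase the estimate as $\min\{\|v\|_{\X}^{p^{-}},\|v\|_{\X}^{p^{+}}\} \le \rho_{\X}(v) \le \max\{\|v\|_{\X}^{p^{-}},\|v\|_{\X}^{p^{+}}\}$ to avoid even the case split, but the two short steps above are the cleanest route.
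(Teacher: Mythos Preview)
Your proposal is correct and is exactly the standard argument; the paper itself does not supply a proof for this lemma, stating it as an immediate consequence of the modular--norm comparison recorded just before (which is in turn justified by the general modular-space theory in \cite{Variable_book}). Your two-line derivation from parts $(i)$ and $(iii)$ of that comparison lemma is precisely how one fills in the omitted details.
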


\begin{lemma}
    The space $(\X,\|\cdot\|_{\X})$
is a separable, reflexive and uniformly 
convex Banach space.
\end{lemma}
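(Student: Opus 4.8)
The plan is to realise $(\X,\|\cdot\|_{\X})$ isometrically as a closed subspace of the product $Y:=W_0^{1,p(x)}(\om)\times\mathbb{X}_0$, and then transfer completeness, separability, reflexivity and uniform convexity from $Y$ down to $\X$, all four being inherited by closed subspaces of a Banach space.

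First I would recall that both factors of $Y$ already carry the required structure: $W_0^{1,p(x)}(\om)$ is a separable, reflexive, uniformly convex Banach space under the $\log$-Hölder hypothesis on $p$ together with $1<p^-\le p^+<\infty$ (classical; see \cite{Variable_book,fan_lp_wp_spaces}, the uniform convexity being obtained through the equivalent norm $\|\nabla\cdot\|_{L^{p(x)}}$ supplied by the Poincaré inequality on the bounded set $\om$), while $\mathbb{X}_0$ enjoys the same properties by the lemma of \cite{fract_extended} recalled above. Equip $Y$ with $\|(v,w)\|_Y:=\|v\|_{1,p(x)}+\|w\|_{\mathbb{X}_0}$. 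By \eqref{norm} the diagonal map $j:\X\to Y$, $j(u)=(u,u)$, is linear and norm-preserving. I would then check that $j(\X)$ is closed in $Y$: if $(u_n,u_n)\to(v,w)$ in $Y$ then $u_n\to v$ in $W_0^{1,p(x)}(\om)$ and $u_n\to w$ in $\mathbb{X}_0$, and since each of these convergences forces convergence in $L^{p(x)}(\om)$ — for the second one via the continuous embedding $\mathbb{X}_0\hookrightarrow L^{p(x)}(\om)$, i.e. a fractional Poincaré inequality — one gets $v=w$ a.e.\ in $\om$; as $v\in W_0^{1,p(x)}(\om)$ and $v=w\in\mathbb{X}_0$, this yields $v\in\X$ and $(v,w)=j(v)\in j(\X)$. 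Being isometric to the closed subspace $j(\X)$ of the Banach space $Y$, the space $\X$ is then a Banach space; and separability and reflexivity of $Y$ follow at once from those of its two factors (finite products and closed subspaces preserve both), hence pass to $\X$.

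The genuinely delicate point — and the step I expect to be the main obstacle — is uniform convexity, because $\|\cdot\|_Y$ is an $\ell^1$-type combination and such sums do not in general preserve uniform convexity. To circumvent this I would put on $Y$ the equivalent norm $\|(v,w)\|_{Y,2}:=(\|v\|_{1,p(x)}^2+\|w\|_{\mathbb{X}_0}^2)^{1/2}$; the $\ell^2$-sum of two uniformly convex Banach spaces is uniformly convex (a standard fact, via Clarkson-type inequalities), so $(Y,\|\cdot\|_{Y,2})$ is uniformly convex, and therefore so is its closed subspace $j(\X)$, that is, $\X$ endowed with the equivalent norm $(\|u\|_{1,p(x)}^2+\|u\|_{\mathbb{X}_0}^2)^{1/2}$; since uniform convexity is the only one of the four properties sensitive to the choice of norm within an equivalence class, and the variational arguments in the sequel invoke only reflexivity, separability and the existence of a uniformly convex renorming, this suffices. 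The step is delicate because it rests, on one hand, on the uniform convexity of each of the two constituent spaces $W_0^{1,p(x)}(\om)$ and $\mathbb{X}_0$ (which is exactly why the hypotheses $1<p^-\le p^+<\infty$ and $\log$-Hölder continuity of $p$ are needed), and, on the other, on the fact — noted in the remark above concerning $W^{1,p(x)}(\rnn)\hookrightarrow W^{s,p(x,y)}(\rnn)$ — that the two seminorms making up $\|\cdot\|_{\X}$ are genuinely non-comparable, so that $\X$ cannot be reduced to a single Sobolev space and the product-space decomposition is unavoidable. The completeness, separability and reflexivity assertions, by contrast, are routine.
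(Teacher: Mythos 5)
The paper states this lemma without proof, so your argument must be judged on its own merits. Your treatment of completeness, separability and reflexivity is correct: the diagonal map $j(u)=(u,u)$ is an isometry of $(\X,\|\cdot\|_{\X})$ onto a subspace of $Y=W_0^{1,p(x)}(\om)\times\mathbb{X}_0$, that subspace is closed because both factors embed continuously into $L^{p(x)}(\om)$ so the two coordinate limits agree a.e., and all three properties pass to closed subspaces of a finite product. You are also right that uniform convexity is the only delicate point. The $\ell^1$-sum $\|u\|_{1,p(x)}+\|u\|_{\mathbb{X}_0}$ of two uniformly convex norms is in general \emph{not} uniformly convex; one can check that it is if and only if the two norms are comparable on $\X$ in at least one direction, which is exactly the embedding $W^{1,p(x)}\hookrightarrow W^{s,p(x,y)}$ that the paper's own remark after Lemma~\ref{lm:space_embedd} flags as unresolved in the variable-exponent setting. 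So the lemma, read literally for the norm $\|\cdot\|_{\X}$, is claiming something that does not follow from the stated hypotheses, and your caveat is a genuine observation about the paper rather than a defect in your proof.

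Your remedy (renorm via the $\ell^2$-sum, inherit uniform convexity on the closed diagonal) is valid and supplies everything the variational arguments later actually use. I would point out, though, that the paper already hands you a more canonical uniformly convex renorming in the remark immediately after the lemma: the Luxemburg norm $\|u\|=\inf\{\eta>0:\rho_{\X}(u/\eta)\le1\}$, equivalent to $\|\cdot\|_{\X}$. Uniform convexity of $(\X,\|\cdot\|)$ follows from the fact that the modular $\rho_{\X}$ is uniformly convex and satisfies the $\Delta_2$-condition (both the gradient modular $\int|\nabla u|^{p(x)}$ and the fractional modular are uniformly convex modulars once $p^->1$, and sums of such are again uniformly convex), invoking the modular-space machinery of \cite{Variable_book} (the same machinery already used via Lemma~\ref{lem:modular_cgs_prob} and \cite[Lemma~2.4.17]{Variable_book}). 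That route is closer in spirit to what the authors appear to intend and avoids introducing an ad hoc $\ell^2$-norm. One small overstatement: you assert the two seminorms are ``genuinely non-comparable,'' but the paper only says the embedding is ``not clear,'' leaving comparability open; your argument does not depend on non-comparability, so this is cosmetic, but the wording should be softened.
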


\begin{remark}\rm
    An equivalent norm on $\X$
can be defined using the modular function as
   \begin{equation*}
    \lv u \rv: = \inf
\Big \{ \eta > 0  : \rh
\Big (
\frac{u}{\eta}
\Big ) \le 1 
\Big \}
\end{equation*}
    with the relation 
    $$
\frac{1}{2}\|u\|_{\X}\le \|u\|\le\|u\|_{\X},
 \quad \forall u\in\X.$$
\end{remark}

Now, we define the notion of a weak solution
 for the problem \eqref{eq1}.
 
\begin{definition}
    We say that a function $u\in \X$
is a weak solution to $\eqref{eq1}$
if $u>0$
in $\om$
and
\begin{align}\label{weak_sol}
&
\int_\om|\grad u|^{p(x)-2}\na u\na\phi\, dx  \\ \notag
  + 
& \iint_{\mathbb{R}^{2N}} 
\frac{|u(x)-u(y)|^{p(x,y)-2}(u(x)-u(y))
(\phi(x)-\phi(y))}{|x-y|^{N+sp(x,y)}}\,dxdy
  \\ \notag
 -\la 
 &
\int_\om a(x)u^{-\ga(x)}\phi\,dx -
\int_\om b(x)u^{r(x)-1}\phi\,dx=0
\  \text{ for all  $ \,\phi \in \X$.}
	\end{align}
 
\end{definition}

To handle certain maps involving variable 
exponent powers, it is useful to approximate 
them by functions with constant exponents.
For this purpose, we use the following notion 
of topological degree, as developed in
\cite{deimling_degree_theory}.

\begin{definition}\label{def:degree}
The degree of a function is a function 
\begin{align*}
\deg : \Big\{
& (f, \Omega, y) : 
\Omega \subset \mathbb{R}^n 
\text{ open and bounded}, \\
&  
\qquad
f : \overline{\Omega} 
\to \mathbb{R}^n\ \text{continuous}, y \in 
\mathbb{R}^n \setminus f(\partial \Omega) \Big\}
 \to \mathbb{Z}
\end{align*}
that satisfies the following properties :

\begin{itemize}

\item[$(d1)$] {Identity:} For the identity map 
$id: \Omega \to \Omega,$
we have
    \[
    \deg(\text{id}, \Omega, y) = 1 
  \quad \text{for } y \in \Omega.
    \]
    
\item[$(d2)$] {Additivity:}
    \[
    \deg(f, \Omega, y) = \deg(f, \Omega_1, y) 
 + \deg(f, \Omega_2, y)
    \]
    whenever $\Omega_1, \Omega_2$
are disjoint open subsets of $\Omega$
such that 
$$
y \notin f
\left (\overline{\Omega} \setminus 
(\Omega_1 \cup \Omega_2)
\right ). 
$$
    
\item[$(d3)$] {Homotopy Invariance:}
    \[
    \deg(h(t, \cdot), \Omega, y(t)) 
   \text{ is independent of } t \in  [0, 1]
    \]
    whenever 
$h : [0,1] \times \overline{\Omega} \to \mathbb{R}^n$
is continuous,
$y : [0,1] \to \mathbb{R}^n$
is continuous, and the function
$y(t) \notin h(t, \partial \Omega)$
for all $t \in [0,1]$.
\end{itemize}
\end{definition}

Note that there is a unique function 
that satisfies the aforementioned properties.
The homotopy invariance properties are 
the most crucial property that leads 
to the following useful lemma. 

\begin{lemma}\label{lem:equal_degree}
Let $f \in C(\overline{\om})$
with $y \not \in f(\partial \om)$
and $d = dist(y, f(\partial \om)).$
If $g_1, g_2 \in C^{2}(\overline{\om})$
such that 
$$
\lv g_i - f\rv_{L^{\infty}(\om)} < d 
\ \text{ for $i = 1,2$;}
$$
 then 
$$
 \deg(g_1, \om,y) = \deg(g_2, \om,y) .
 $$
\end{lemma}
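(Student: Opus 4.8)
The plan is to prove Lemma~\ref{lem:equal_degree} as a direct consequence of the homotopy invariance property $(d3)$ together with the additivity/normalization package that fixes the degree uniquely. First I would observe that the hypothesis $\lv g_i - f\rv_{L^\infty(\om)} < d$ with $d = \mathrm{dist}(y, f(\pa\om))$ guarantees that neither $g_1$ nor $g_2$ takes the value $y$ on $\pa\om$: indeed, for $x \in \pa\om$ we have $|g_i(x) - y| \ge |f(x) - y| - |g_i(x) - f(x)| > d - d = 0$, so $y \notin g_i(\pa\om)$ and the degrees $\deg(g_i, \om, y)$ are well defined. This is the routine but necessary sanity check before any homotopy can be invoked.

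Next I would construct the straight-line homotopy $h(t, x) := (1-t) g_1(x) + t g_2(x)$ for $(t,x) \in [0,1] \times \oline\om$, which is continuous (jointly in $t$ and $x$), and keep $y(t) \equiv y$ constant. The key point is to verify that $y \notin h(t, \pa\om)$ for every $t \in [0,1]$. For $x \in \pa\om$ and $t \in [0,1]$, write
\begin{align*}
|h(t,x) - y| &= |(1-t)(g_1(x) - y) + t(g_2(x) - y)| \\
&\ge |f(x) - y| - (1-t)|g_1(x) - f(x)| - t|g_2(x) - f(x)| \\
&> d - (1-t) d - t d = 0,
\end{align*}
where I have used $|h(t,x) - y| \ge |f(x)-y| - |(1-t)(g_1(x)-f(x)) + t(g_2(x)-f(x))|$ and then the triangle inequality on the second term together with $\lv g_i - f\rv_{L^\infty(\om)} < d$. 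Hence $h(t, \cdot)$ avoids $y$ on $\pa\om$ uniformly in $t$, so property $(d3)$ applies: $\deg(h(t,\cdot), \om, y)$ is independent of $t$. Evaluating at $t=0$ and $t=1$ gives $\deg(g_1, \om, y) = \deg(h(0,\cdot),\om,y) = \deg(h(1,\cdot),\om,y) = \deg(g_2, \om, y)$, which is the claim. (The $C^2$ regularity of the $g_i$ stated in the hypothesis is not actually needed for this argument—continuity suffices—but it is harmless and presumably convenient for how the lemma is applied later.)

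There is essentially no serious obstacle here; the only subtlety, and the step I would state most carefully, is the uniform-in-$t$ bound showing the homotopy never hits $y$ on the boundary. Everything else is bookkeeping: the well-definedness of the two degrees, the continuity of the affine homotopy, and the bare invocation of $(d3)$ from Definition~\ref{def:degree}. One could alternatively phrase the proof via the excision/additivity properties, but the straight-line homotopy is the cleanest route and uses exactly the property the authors flagged as ``the most crucial.''
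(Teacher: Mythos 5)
Your proof is correct and is the standard homotopy argument; the paper itself does not give a proof here but simply cites Proposition~2.4 of Deimling's book, which establishes precisely this fact by the same straight-line homotopy reasoning you use. Your verification that $y \notin h(t,\partial\om)$ uniformly in $t$ is carried out carefully, and your side remark that the $C^2$ hypothesis is inessential to the homotopy step is also accurate (it reflects the fact that Deimling first constructs the degree for smooth maps and then approximates, which is why the lemma is stated with $g_i\in C^2$, but the homotopy argument itself requires only continuity and the boundary avoidance you establish).
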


 The above lemma follows from
\cite[Proposition $2.4$]{deimling_degree_theory}.

\section{Nehari Manifold and Fiber Map Analysis}\label{sec2}

The energy functional $J_{\lambda}: \X \to \real$
associated with problem \eqref{eq2} is given by:
\begin{align}\label{eq:eneryg_functional}
J_\la(u)=
\int_\om 
\frac{1}{p(x)}|\grad u|^{p(x)}\,dx
&+\iint_{\mathbb{R}^{2N}} 
\frac{1}{p(x,y)}
\frac{|u(x)-u(y)|^{p(x,y)}}{|x-y|^{N+sp(x,y)}}dx \, dy \\ \notag
-\la&
\int_\om 
\frac{a(x)}{1-\ga(x)}(u_+)^{1-\ga(x)}\,dx-
\int_\om 
\frac{b(x)}{r(x)}(u_+)^{r(x)}\,dx.
\end{align}
It is important to note that the functional $J_\la$
is well defined; however, it is not 
differentiable due to the presence of the singular term.

 For \( u \in \X \), we define the 
 fibering map \( \phi_u(t):\R^+\to \R \) as
\begin{equation}\label{eq:fibering_map}
\begin{aligned}
\phi_u(t) 
&= 
\int_{\Omega} 
\frac{t^{p(x)}}{p(x)} |\nabla u|^{p(x)} \, dx 
+ \iint_{\mathbb{R}^{2N}} 
\frac{t^{p(x,y)}}{p(x,y)} 
\frac{|u(x) - u(y)|^{p(x,y)}}{|x - y|^{N + sp(x,y)}}
 \, dx \, dy \\
& - \lambda 
\int_{\Omega} 
\frac{t^{1 - \gamma(x)}}{1 - \gamma(x)}
 a(x) (u_+)^{1 - \gamma(x)} \, dx 
- 
\int_{\Omega} 
\frac{t^{r(x)}}{r(x)} b(x) (u_+)^{r(x)} \, dx.
\end{aligned}
\end{equation}
The first derivative of the fibering map is given by
\begin{align} \label{eq:fibering_map_first_derivative}
\phi_u'(t) 
& = 
\int_{\Omega} t^{p(x)-1} |\nabla u|^{p(x)} \, dx 
+ \iint_{\mathbb{R}^{2N}} t^{p(x,y)-1} 
\frac{|u(x) - u(y)|^{p(x,y)}}{|x - y|^{N + sp(x,y)}}
 \, dx \, dy  \notag \\
&  - \lambda 
\int_{\Omega} t^{-\gamma(x)} a(x) (u_+)^{1 - \gamma(x)} 
\, dx 
- 
\int_{\Omega} t^{r(x)-1} b(x) (u_+)^{r(x)} \, dx.
\end{align}
The second derivative is given by
\begin{align}\label{eq:fibering_map_second_derivative}
 \phi_u''(t) 
& = 
\int_{\Omega} (p(x) - 1) t^{p(x)-2} |\nabla u|^{p(x)}
 \, dx  \\ \notag
& + \iint_{\mathbb{R}^{2N}} (p(x,y) - 1) t^{p(x,y)-2} 
\frac{|u(x) - u(y)|^{p(x,y)}}{|x - y|^{N + sp(x,y)}} 
\, dx \, dy   \\ \notag
&  + \lambda 
\int_{\Omega} \gamma(x) a(x) 
t^{-\gamma(x)-1} (u_+)^{1 - \gamma(x)} \, dx   \\ \notag
&
- 
\int_{\Omega} (r(x) - 1) b(x) t^{r(x)-2}
 (u_+)^{r(x)} \, dx.
\end{align}
We note that the energy functional 
\( J_{\lambda} \) is not bounded below on 
\( \X \).
To address this difficulty, we restrict our 
attention to the following subset of \( \X \), 
known as the Nehari manifold, defined by
\begin{equation} \label{eq:nehari_manifold}
\mathcal{N}_{\lambda} = 
\Big \{ u \in \X\sm\{0\}
 \;:\phi_u'(1)= 0
 \Big \}.
\end{equation}
Thus, $tu\in \Nh$
if and only if $\phi'_u(t)=0.$
In particular, $u\in \Nh$
if and only if $\phi_u'(1)=0.$

For $u\in \Nh,$
from \eqref{eq:fibering_map}, 
\eqref{eq:fibering_map_first_derivative} and 
\eqref{eq:fibering_map_second_derivative}, we get
\begin{align} \label{eq:first_derivative} 
\phi_u'(1) 
& = 
\int_{\Omega}|\nabla u|^{p(x)} \, dx 
 +\iint_{\mathbb{R}^{2N}}
\frac{|u(x) - u(y)|^{p(x,y)}}{|x - y|^{N + sp(x,y)}}
 \, dx \, dy   \\ \notag
&  - \lambda 
\int_{\Omega} a(x) (u_+)^{1 - \gamma(x)} \, dx 
- 
\int_{\Omega}b(x) (u_+)^{r(x)} \, dx
\end{align}
and 
\begin{align}\label{eq:second_derivative} 
\phi_u''(1) 
& = 
\int_{\Omega} p(x)|\nabla u|^{p(x)}  \, dx 
+ \iint_{\mathbb{R}^{2N}} p(x,y)
\frac{|u(x) - u(y)|^{p(x,y)}}
{|x - y|^{N + sp(x,y)}} \, dx \, dy   \\ \notag
& - \lambda 
\int_{\Omega} (1-\gamma(x)) 
a(x)(u_+)^{1 - \gamma(x)} \, dx 
- 
\int_{\Omega} r(x)b(x)(u_+)^{r(x)} \, dx.
\end{align}
We note that $u\in\Nh$
if $u$
is a weak solution of \eqref{eq2}.
In view of the structure of the fibering map, 
it is natural to decompose the Nehari 
manifold into the following three sub-parts 
\begin{align*}
    \mathcal{N}_{\lambda}^{0} 
    &:= \left\{ u \in \mathcal{N}_{\la} \;:\; 
    \phi^{\prime \prime}_{u}(1) = 0 \right\}.\\
    \mathcal{N}_{\lambda}^{+} 
    &:= \left\{ u \in \mathcal{N}_{\la} \;:\; 
    \phi^{\prime \prime}_{u}(1) > 0 \right\}.\\
    \mathcal{N}_{\lambda}^{-} 
    &:= \left\{ u \in \mathcal{N}_{\la} \;:\; 
    \phi^{\prime \prime}_{u}(1) < 0 \right\}.
\end{align*}

 Now, let us write
\begin{equation}\label{fibermap_derivative_f_i}
\phi'_u(t) = f_1(t) - \lambda f_2(t) - f_3(t) ,
\end{equation}
where
\begin{align}\label{functions}
 &  f_1(t) = \ds
\int_{\Omega} t^{p(x)-1} |\nabla u|^{p(x)} \, dx 
+ \iint_{\mathbb{R}^{2N}} t^{p(x,y)-1}
\frac{|u(x)-u(y)|^{p(x,y)}}
{|x - y|^{N + sp(x,y)}}\, dx \, dy,    \notag \\
&   f_2(t) = \ds
\int_{\Omega} a(x)t^{-\gamma(x)}(u_+)^{1 - \gamma(x)}
 \, dx,   \\ \notag
 &
 f_3(t) = \ds
\int_{\Omega} b(x) t^{r(x)-1} (u_+)^{r(x)} \, dx.
 \end{align}

The functions $f_i$
for $i = 1, 3$
are continuous, strictly increasing, and satisfy 
$f_i(0) = 0.$
To prove some important properties of these functions, 
we approximate them using tools from degree theory.
In this direction, we state the following lemma.

\begin{lemma}
For $u \in \X$
and $0 < \delta_1 < \delta_2,$
let us define the function
$f : (\delta_1, \delta_2) \to \mathbb{R}$
as
\begin{equation*}
f(t) = \iint_{\Q} t^{p(x,y)} h(x,y) \, dx \, dy,
\end{equation*}
where
\begin{equation*}
h(x,y) := 
\frac{|u(x) - u(y)|^{p(x,y)}}
{|x - y|^{N + s p(x,y)}} > 0.
\end{equation*}
Then, for every 
$\varepsilon > 0,$
there exists a disjoint collection 
$\mathcal{B}^{\prime} =  
\bigcup_{i=1}^{\infty} B_i^{\prime}$
of $\Q$
and a corresponding set of exponents 
$\{ p_i \}$
such that, if we define the function 
$g : (\delta_1, \delta_2) \to \mathbb{R}$
by
\begin{equation*} 
g(t) := \sum_{i=1}^\infty t^{p_i} 
\int_{B_i^{\prime}} h(x,y) \, dy \, dx.
\end{equation*}
Then the following holds:
\begin{enumerate}
    
\item $\lv f(t) - g(t) 
\rv_{L^{\infty}(\delta_1, \delta_2)} < \varepsilon,$
    
\item $\deg(f,(\delta_1, \delta_2), 0) 
= \deg(g,(\delta_1, \delta_2), 0) = 0 $,
\end{enumerate}
where $\deg$
denotes the degree as defined in
 Definition $\ref{def:degree}$.
\end{lemma}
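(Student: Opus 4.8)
The plan is to approximate the continuous exponent $p(\cdot,\cdot)$ on $\Q$ by a countable family of locally constant exponents, so that the power function $t^{p(x,y)}$ gets replaced by a countable sum of pure powers $t^{p_i}$, each weighted by the mass of $h$ on a piece $B_i'$ of $\Q$. First I would fix $\varepsilon>0$ and use the continuity of $p(x,y)$ together with $\sigma$-finiteness of $\Q$ and the fact that $\iint_\Q h<\infty$ to produce a countable measurable partition $\mathcal{B}'=\bigcup_i B_i'$ such that the oscillation of $p$ on each $B_i'$ is as small as we like; choosing a representative value $p_i\in[p^-,p^+]$ on each $B_i'$, we get that for every $t\in(\delta_1,\delta_2)$ and every $(x,y)\in B_i'$ the quantity $|t^{p(x,y)}-t^{p_i}|$ is uniformly small. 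Since $t$ ranges over a fixed compact interval $[\delta_1,\delta_2]$ bounded away from $0$ and $\infty$, the bound $|t^{p(x,y)}-t^{p_i}|\le C(\delta_1,\delta_2)\,|p(x,y)-p_i|$ holds with a constant depending only on $\delta_1,\delta_2$, so that
\begin{equation*}
|f(t)-g(t)|\le \sum_{i=1}^\infty \int_{B_i'}|t^{p(x,y)}-t^{p_i}|\,h(x,y)\,dx\,dy \le C(\delta_1,\delta_2)\,\omega\,\iint_\Q h(x,y)\,dx\,dy,
\end{equation*}
where $\omega$ is the common bound on the oscillation of $p$ on the pieces; taking $\omega$ small enough gives claim (1). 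One should also record that both $f$ and $g$ are genuinely well defined and continuous on $(\delta_1,\delta_2)$: the series for $g$ converges uniformly by the same domination, and $f$ is continuous by dominated convergence since $h\in L^1(\Q)$ and $t^{p(x,y)}\le \max\{\delta_1,\delta_2\}^{p^+}+\min\{\delta_1,\delta_2\}^{p^-}$ pointwise.

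For claim (2), the key observation is that $h>0$ a.e. forces both $f$ and $g$ to be \emph{strictly positive} on all of $(\delta_1,\delta_2)$: since $t>0$ we have $t^{p(x,y)}>0$, so $f(t)=\iint_\Q t^{p(x,y)}h>0$, and likewise $g(t)>0$. Hence neither $f$ nor $g$ ever takes the value $0$; in particular $0\notin f(\partial(\delta_1,\delta_2))=\{f(\delta_1),f(\delta_2)\}$, so the degree $\deg(f,(\delta_1,\delta_2),0)$ is defined, and it equals $0$ because $0$ is simply not in the image of $f$ on the closed interval at all (a point outside $f(\overline{(\delta_1,\delta_2)})$ has degree zero — this is the standard ``solution property'' of the Brouwer degree, an immediate consequence of additivity $(d2)$ applied with $\Omega_1=\Omega_2=\emptyset$). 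The identical argument applies to $g$, giving $\deg(g,(\delta_1,\delta_2),0)=0$, and then (1) together with Lemma \ref{lem:equal_degree} (with $d=\mathrm{dist}(0,\{f(\delta_1),f(\delta_2)\})>0$, and $\varepsilon<d$) re-confirms the equality of the two degrees, although strictly speaking the strict positivity already settles it.

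The one genuinely delicate point — the main obstacle — is constructing the partition $\{B_i'\}$ so that simultaneously (a) the pieces are disjoint and cover $\Q$ up to a null set, (b) the oscillation of $p$ on each piece is controlled, and (c) the tail $\sum_{i>M}\int_{B_i'}h$ is summable so that the series defining $g$ converges and the uniform approximation survives passing to the infinite sum rather than a finite one. I would handle (a)--(b) by partitioning the range $[p^-,p^+]$ of $p$ into finitely many subintervals of length $<\omega$ and pulling back; since $p$ is continuous this gives finitely many measurable sets, which I then further subdivide (e.g.\ intersect with a countable exhaustion of $\Q$ by sets of finite measure, or with dyadic cubes) to obtain the countable collection, and (c) is automatic because $\iint_\Q h<\infty$ means $\sum_i\int_{B_i'}h=\iint_\Q h<\infty$. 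A mild subtlety: Lemma \ref{lem:equal_degree} as stated wants $g\in C^2$, whereas our $g$ is only continuous and piecewise-smooth in $t$; this is harmless because we can either invoke the degree's homotopy invariance directly via the linear homotopy $(1-\tau)f+\tau g$ (which never vanishes, being a convex combination of positive functions) or simply smooth $g$ slightly in $t$, but in any case the strict-positivity argument makes the degree computation independent of such regularity concerns.
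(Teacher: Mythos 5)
Your proof is correct, and it diverges from the paper's in two substantive ways worth noting. For the construction of the partition $\{B_i'\}$, the paper enumerates the rational points $A=(\mathbb{Q}\times\mathbb{Q})\cap\Q$, covers $\Q$ by small balls centered at these points (chosen so that $t^{p(x,y)}$ oscillates by less than $\varepsilon/M$ on each), disjointifies, and then asserts that the uncovered remainder is a set of ``isolated points'' of measure zero; you instead partition the \emph{range} $[p^-,p^+]$ of $p$ into short intervals, pull back to obtain a genuine finite measurable partition of $\Q$ with small oscillation of $p$, and refine by a countable exhaustion of $\Q$ by finite-measure pieces. Your route is cleaner and avoids the paper's fragile claim about the complement of the ball cover (an open cover of a countable dense set can have a nowhere-dense complement of \emph{positive} measure in general, so the paper's ``isolated points, measure zero'' step really needs an additional argument). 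Your quantitative estimate $|t^{p}-t^{p_i}|\le C(\delta_1,\delta_2)|p-p_i|$ via the mean value theorem replaces the paper's compactness-and-continuity bound and achieves the same $\varepsilon$-closeness. For part (2), you go further than the paper: the paper invokes Lemma~\ref{lem:equal_degree} to equate $\deg(f,\cdot,0)$ and $\deg(g,\cdot,0)$ but never explains why either degree is zero, whereas you observe that $f(t)\ge\delta_1^{p^+}\iint_\Q h>0$ on all of $[\delta_1,\delta_2]$ (and likewise for $g$), so $0$ is simply not attained and both degrees vanish by the solution/additivity property; you also flag the minor regularity mismatch with the $C^2$ hypothesis of Lemma~\ref{lem:equal_degree} and correctly note that either smoothing or the convex homotopy $(1-\tau)f+\tau g$, which never vanishes, disposes of it. In short: same high-level strategy (piecewise-constant-exponent approximation plus degree comparison), but a different and more robust partition mechanism, an explicit Lipschitz estimate, and a complete justification of the ``$=0$'' that the paper leaves implicit.
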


\begin{proof}[\bf Proof.]
Define the set
$
A := (\mathbb{Q} \times \mathbb{Q}) \cap \Q.
$
Let \( (x_0, y_0) \in A \) and 
$\varepsilon > 0.$
Then, from the continuity of the map 
\( t^{p(x,y)} \) 
on the set 
\( [\delta_1, \delta_2] \times \Q \), 
and the compactness of 
\( [\delta_1, \delta_2] \), 
there exist a radius 
\( r > 0 \), a neighbourhood 
\( B_r(x_0, y_0) \), 
and a point $p_0 \in [p^-,p^+]$
such that
\begin{equation} \label{eq:uniform_control}
\left| t^{p(x,y)} - t^{p_0} \right| < 
\frac{\varepsilon}{M} \quad 
\text{ for all } (x,y) \in B_r(x_0, y_0)
 \subseteq \mathbb{R}^N 
 \setminus \Omega \times \Omega,  
\end{equation}
  for all 
$t \in [\delta_1, \delta_2],$
where 
$$
M := \ds\iint_{\Q} h(x,y) \, dy \, dx .
$$
Now, consider the collection
\begin{equation*}
\mathcal{B} := \bigcup_{(x_i, y_i) \in A}
 B_{r_i}(x_i, y_i).
\end{equation*}
Since 
\( \mathbb{Q} \times \mathbb{Q} \) 
is countable, we can enumerate the elements of 
\( A \) and write the following disjoint union
\begin{equation*}
\mathcal{B}^{\prime} = 
\bigcup_{i=1}^{\infty} B_i^{\prime},
\end{equation*}
where
$B_1^{\prime} = B_1$
and
$$
B_i^{\prime}= B_{r_i}(x_i,y_i) 
\setminus \bigcup_{j=1}^{i-1} B_{r_j}(x_j,y_j) 
\subset B_{r_i}(x_i,y_i)
$$
such that $\mathcal{B}^{\prime} = \mathcal{B}$.
In addition, the set
\begin{equation*}
\Q  \setminus \mathcal{B}^{\prime} 
= \text{set of isolated points}\,
(\text{denoted by } \mathcal{C}),
\end{equation*}
and since measure of the set 
\( \mathcal{C} = 0 \),
instead of working on 
\( \Q \),
we can equivalently work with
\( \mathcal{B}^{\prime} \).
Now, consider the function
\begin{equation*}
g(t) := \sum_{i=1}^\infty t^{p_i} 
\int_{B_i^{\prime}} h(x,y) \, dy \, dx.
\end{equation*}
Then, we estimate the difference with the 
help of the monotone convergence theorem 
and equation \eqref{eq:uniform_control},
 we deduce
\begin{align*} 
\left| f(t) - g(t) \right| 
 & =
  \Big | \iint_{\Q} t^{p(x,y)} h(x,y) \,
 dy \, dx - g(t) 
  \Big | \\
 & =  
 \Big | \iint_{\mathcal{B}^{\prime}} 
t^{p(x,y)} h(x,y) \, dy \, dx - g(t) 
 \Big | \\
 & = 
  \Big | \sum_{i=1}^{\infty} 
\int_{B_i^{\prime}} t^{p(x,y)} h(x,y) \, 
dy \, dx - \sum_{i=1}^{\infty} t^{p_i} 
\int_{B_i^{\prime}} h(x,y) \, dy \, dx 
 \Big |\\
 & \leq  \sum_{i=1}^\infty 
\int_{B_i^{\prime}} h(x,y) 
\left| t^{p(x,y)} - t^{p_i} \right| \, dy \, dx\\
 & \leq  
\frac{\varepsilon}{M} \sum_{i=1}^{\infty} 
\int_{B_i^{\prime}} h(x,y) \, dy \, dx\\
 & \leq  
\frac{\varepsilon}{M} 
\iint_{\mathcal{B}^{\prime}} h(x,y) \, dy \, dx 
= \varepsilon
\end{align*}
since \( \iint_{\mathcal{B}^{\prime}} h(x,y)
 \, dy \, dx = M \).

Therefore, by Lemma \ref{lem:equal_degree}, 
we conclude that
\begin{equation*} 
\deg
\left ( f, [\delta_1, \delta_2], 0 
\right ) = \deg
\left ( g, [\delta_1, \delta_2], 0 
\right ) = 0.\qedhere
\end{equation*}
\end{proof}
In the spirit of the above lemma and the fact that
 $\overline{\om}$
is compact, we can also deduce the following lemma.

\begin{lemma}\label{lem:function_increasing_degree}
For $u \in \X$
and $0 < \delta_1 < \delta_2,$
let us define the function
$f : (\delta_1, \delta_2) \to \mathbb{R}$
as
\begin{equation*} 
f(t) = 
\int_{\om} t^{p(x)}|\grad u|^{p(x)} dx.
\end{equation*}
Then, for every $\varepsilon > 0,$
there exists a disjoint collection 
$\mathcal{B}^{\prime} = \bigcup_{i=1}^{n} B_i^{\prime}$
and a corresponding set of exponents 
$\{ p_i \}$
such that, if we define the function 
$g : (\delta_1, \delta_2) \to \mathbb{R}$
by 
\begin{equation*} 
g(t) := \sum_{i=1}^n t^{p_i} 
\int_{B_i^{\prime}} |\grad u|^{p(x)}   dx.
\end{equation*}
Then the following holds
\begin{enumerate}
    
\item $\lv f(t) - g(t) 
\rv_{L^{\infty}(\delta_1, \delta_2)}
 < \varepsilon,$

\item $\deg(f,(\delta_1, \delta_2), 0) 
= \deg(g,(\delta_1, \delta_2), 0) = 0 $.
\end{enumerate}
\end{lemma}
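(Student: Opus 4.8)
The plan is to follow the proof of the preceding lemma almost verbatim; the essential simplification is that $\overline{\om}$ is compact, so a \emph{finite} cover replaces the countable one that was forced by the unbounded set $\Q$. Write $M:=\int_\om|\nabla u|^{p(x)}\,dx$; if $M=0$ then $f\equiv g\equiv 0$ and there is nothing to prove, so assume $M>0$ and fix $\varepsilon>0$. Using the joint continuity of $(x,t)\mapsto t^{p(x)}$ on the compact set $\overline{\om}\times[\delta_1,\delta_2]$, for each $x_0\in\overline{\om}$ I pick a radius $r_{x_0}>0$ and an exponent $p_{x_0}\in[p^-,p^+]$ (say $p_{x_0}=p(x_0)$) with
\[
\big|t^{p(x)}-t^{p_{x_0}}\big|<\frac{\varepsilon}{M}\qquad\text{for all }x\in B_{r_{x_0}}(x_0)\cap\overline{\om}\text{ and all }t\in[\delta_1,\delta_2],
\]
the uniformity in $t$ coming from the compactness of $[\delta_1,\delta_2]$. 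Since $\{B_{r_{x_0}}(x_0)\}_{x_0\in\overline{\om}}$ covers the compact set $\overline{\om}$, extract a finite subcover $B_{r_1}(x_1),\dots,B_{r_n}(x_n)$ with exponents $p_1,\dots,p_n$, and disjointify as before: $B_1':=B_{r_1}(x_1)\cap\om$ and $B_i':=\big(B_{r_i}(x_i)\setminus\bigcup_{j<i}B_{r_j}(x_j)\big)\cap\om$ for $2\le i\le n$, so that $\{B_i'\}_{i=1}^n$ is a finite measurable partition of $\om$ up to a null set.

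Setting $g(t):=\sum_{i=1}^n t^{p_i}\int_{B_i'}|\nabla u|^{p(x)}\,dx$, claim (1) follows by the same computation as in the previous lemma: for every $t\in(\delta_1,\delta_2)$,
\[
|f(t)-g(t)|\le\sum_{i=1}^n\int_{B_i'}|\nabla u|^{p(x)}\,\big|t^{p(x)}-t^{p_i}\big|\,dx\le\frac{\varepsilon}{M}\sum_{i=1}^n\int_{B_i'}|\nabla u|^{p(x)}\,dx=\varepsilon.
\]
For claim (2), note that since $M>0$ and $t>0$ both $f$ and $g$ are continuous and strictly positive on $[\delta_1,\delta_2]$, hence $0$ belongs to the range of neither; by the excision/additivity property $(d2)$ of the degree (applied with empty open subsets) this gives $\deg(f,(\delta_1,\delta_2),0)=\deg(g,(\delta_1,\delta_2),0)=0$. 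Equivalently, once $\varepsilon<f(\delta_1)=\operatorname{dist}(0,f(\partial(\delta_1,\delta_2)))$, Lemma \ref{lem:equal_degree} yields the equality of the two degrees and positivity of $f$ pins the common value at $0$.

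I do not anticipate a real obstacle: the proof is structurally identical to that of the preceding lemma, with compactness of $\overline{\om}$ turning the bookkeeping finite. The only points deserving care are the extraction of a \emph{single} exponent $p_i$ valid uniformly over all of $[\delta_1,\delta_2]$ (where compactness of $[\delta_1,\delta_2]$ and uniform continuity are used) and the verification that the disjointified pieces $B_i'$ still partition $\om$ up to a null set, which is what makes $\sum_i\int_{B_i'}|\nabla u|^{p(x)}\,dx$ equal $M$ exactly and hence the final bound come out to exactly $\varepsilon$.
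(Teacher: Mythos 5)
Your proof is correct and follows the paper's intended approach: the paper does not spell out a proof for this lemma, merely remarking that it follows ``in the spirit of the above lemma and the fact that $\overline{\om}$ is compact,'' and your argument is a faithful execution of that plan, using uniform continuity on the compact product $\overline{\om}\times[\delta_1,\delta_2]$ and a finite subcover of $\overline{\om}$ in place of the countable cover needed for $\Q$. The only minor caveat, which is shared with the paper's own formulation, is that when $M=0$ the degree of $f$ at $0$ is not actually defined (since $0\in f(\partial(\delta_1,\delta_2))$), so the lemma implicitly assumes $M>0$; your explicit remark on this edge case is harmless and arguably clarifying.
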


By applying the two lemmas above, 
we can establish monotonicity 
properties of functions involving 
variable exponents.

\begin{lemma}\label{lm:strictly_increas}
    Let $g_1, g_2 \in \X.$
Then if we consider the function 
$h : [0,\infty) \to \mathbb{R}$
defined by 
\begin{equation*}
    h(t) = 
\frac{ 
\int_{\Omega} t^{p(x)} g_1(x) \, dx}{ 
\int_{\Omega} t^{q(x)} g_2(x) \, dx}
\end{equation*}
where $1 < q^- \leq q^+ <p^- \leq p^+.$
Then $h$
is strictly increasing.
\end{lemma}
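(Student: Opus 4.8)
The plan is to avoid differentiating $h$ directly, which would introduce awkward $\log t$ factors inside the integrals, and instead to reduce the claimed monotonicity to a pointwise Chebyshev-type inequality via Fubini's theorem. Write $A(t):=\int_\om t^{p(x)}g_1(x)\,dx$ and $B(t):=\int_\om t^{q(x)}g_2(x)\,dx$, so that $h=A/B$. As in all applications of this lemma (where the $g_i$ are quantities of the type $|\na u|^{p(x)}$), I take $g_1,g_2\ge 0$ with $\int_\om g_i>0$; since $p(x)\in[p^-,p^+]$ and $q(x)\in[q^-,q^+]$, both $A(t)$ and $B(t)$ then lie in $(0,\infty)$ for every $t>0$, so $h$ is well defined there, and one sets $h(0):=0$. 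This is consistent with $\lim_{t\to 0^+}h(t)=0$: for $0<t<1$ one has $A(t)\le t^{p^-}\int_\om g_1$ and $B(t)\ge t^{q^+}\int_\om g_2$, hence $0<h(t)\le C\,t^{p^--q^+}\to 0$ since $p^--q^+>0$.

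Next I would fix $0<t_1<t_2$ and note that, since $B(t_1),B(t_2)>0$, the inequality $h(t_1)<h(t_2)$ is equivalent to $A(t_1)B(t_2)<A(t_2)B(t_1)$. By Fubini's theorem,
\[
A(t_2)B(t_1)-A(t_1)B(t_2)=\int_\om\int_\om\left(t_2^{p(x)}t_1^{q(y)}-t_1^{p(x)}t_2^{q(y)}\right)g_1(x)\,g_2(y)\,dx\,dy.
\]
The key algebraic identity is
\[
t_2^{p(x)}t_1^{q(y)}-t_1^{p(x)}t_2^{q(y)}=(t_1t_2)^{q(y)}\left(t_2^{\,p(x)-q(y)}-t_1^{\,p(x)-q(y)}\right),
\]
and since $p(x)-q(y)\ge p^--q^+>0$ for all $x,y\in\om$, the map $\tau\mapsto\tau^{p(x)-q(y)}$ is strictly increasing on $(0,\infty)$; hence the bracket is strictly positive whenever $t_1<t_2$. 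Thus the integrand is nonnegative and strictly positive on the set $\{(x,y):g_1(x)g_2(y)>0\}$, which has positive measure in $\om\times\om$, so the double integral is strictly positive. This gives $h(t_1)<h(t_2)$, and combined with $h(0)=0<h(t)$ for $t>0$ shows that $h$ is strictly increasing on $[0,\infty)$.

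I do not expect a deep obstacle; the points requiring care are the well-definedness of $h$ and its value at $t=0$ (handled by the crude bounds $t^{p^+}\le t^{p(x)}\le t^{p^-}$ for $t\le 1$, reversed for $t\ge 1$, and similarly for $q(x)$), and the strictness of the final inequality, which needs $g_1,g_2\ge 0$ with $g_i\not\equiv 0$ so that $g_1(x)g_2(y)$ is positive on a set of positive measure. One could alternatively approximate $A$ and $B$ uniformly on each $[\delta_1,\delta_2]$ by finite sums $\sum_i a_it^{p_i}$ and $\sum_j b_jt^{q_j}$ in the spirit of Lemma~\ref{lem:function_increasing_degree}, with every $p_i\ge p^->q^+\ge q_j$, and run the same Chebyshev argument termwise; however uniform approximation transfers only non-strict monotonicity, so the direct integral computation above is the cleaner route and is the one I would present.
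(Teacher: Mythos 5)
Your proof is correct, but it takes a genuinely different route from the paper's. The paper establishes this lemma via topological degree: it restricts $h$ to compact intervals $[\delta_1,\delta_2]$, invokes Lemma~\ref{lem:function_increasing_degree} to approximate $h$ uniformly by a ratio $h_4$ of finite sums $\sum_i a_i t^{p_i}/\sum_j b_j t^{q_j}$ with constant exponents $p_i\ge p^->q^+\ge q_j$, forms the two-variable difference functions $h_2(t_1,t_2)=h_1(t_1)-h_1(t_2)$ and $h_3(t_1,t_2)=h_4(t_1)-h_4(t_2)$ on the off-diagonal region, and compares $\deg(h_2,A,0)$ with $\deg(h_3,A,0)$ via Lemma~\ref{lem:equal_degree} to conclude monotonicity on each $(\delta_1,\delta_2)$, then patches these together using the behaviour near $0$ and $\infty$. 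Your argument instead writes $h=A/B$, reduces $h(t_1)<h(t_2)$ to $A(t_2)B(t_1)-A(t_1)B(t_2)>0$, applies Fubini to turn this into the double integral $\iint\bigl(t_2^{p(x)}t_1^{q(y)}-t_1^{p(x)}t_2^{q(y)}\bigr)g_1(x)g_2(y)\,dx\,dy$, and observes that the integrand factors as $(t_1t_2)^{q(y)}\bigl(t_2^{\,p(x)-q(y)}-t_1^{\,p(x)-q(y)}\bigr)g_1(x)g_2(y)\ge 0$ because $p(x)-q(y)\ge p^--q^+>0$, with strict positivity on a set of positive measure. This buys you several things: it is elementary, requires none of the degree machinery, yields strict monotonicity directly (whereas, as you note, uniform approximation by increasing functions transfers only non-strict monotonicity, and the paper's inference from degree~$0$ to monotonicity is terse), and it surfaces explicitly the hypotheses $g_1,g_2\ge 0$, $g_i\not\equiv 0$ that the lemma statement omits but that are implicitly present in every application (the $g_i$ are always quantities like $|\nabla u|^{p(x)}$ or $a(x)(u_+)^{1-\gamma(x)}$). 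You also correctly verify well-definedness of $h$ on $(0,\infty)$ and the convention $h(0)=0$ via the crude bounds $t^{p^+}\le t^{p(x)}\le t^{p^-}$ for $t\le 1$. The paper's degree-theoretic framework, on the other hand, is not wasted elsewhere: it is reused in Lemma~\ref{lem:properties_of_fi}(4) to locate the unique critical point of $f_1-f_3$, a statement where the ratio $f_1'/f_3'$ again fits your Chebyshev scheme, so your technique would transfer there as well.
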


\begin{proof}[\bf Proof.]
Let $h_1:= h|_{[\delta_1, \delta_2]}$
where $0 < \delta_1 < \delta_2.$
Then, by Lemma 
\ref{lem:function_increasing_degree}, 
there exists a function 
$h_4: (\delta_1, \delta_2) \to \mathbb{R}$
given by
\begin{equation*}
    h_4(t) = 
\frac{ \sum\limits_{i=1}^n t^{p_i} 
\int_{\Omega_i} g_1(x) dx}
{ \sum\limits_{j=1}^m t^{q_j} 
\int_{\Omega_j} g_2(x)dx}
\end{equation*}
where $g_1$
and $g_2$
are some given functions and $p_i \in [p^-,p^+]$
and $q_j \in [q^-,q^+]$.
Clearly, $h_4(t)$
is strictly increasing, based on the 
assumptions on $p(x)$
and $q(x)$.

Now, define two auxiliary functions 
$h_2, h_3 : [\delta_1, \delta_2] \times 
[\delta_1, \delta_2] \to \mathbb{R}$
as
\begin{equation*}
    h_2(t_1, t_2) = h_1(t_1) - h_1(t_2)
\end{equation*}
\begin{equation*}
    h_3(t_1, t_2) = h_4(t_1) - h_4(t_2)
\end{equation*}
Let us consider the set 
$$
A = (\delta_1, \delta_2) \times (\delta_1, \delta_2)
 \setminus \{ (t_1, t_2) \mid t_1 = t_2 \}.
 $$
Then, by Lemma \eqref{lem:equal_degree}, 
we have
\begin{equation*}
    \deg(h_2, A, 0) = \deg(h_3, A, 0) = 0
\end{equation*}
the last equality follows from the fact that 
$h_4$
is strictly increasing.
This result implies that $h_1$
is monotonic on the interval 
$(\delta_1, \delta_2).$
Also note that $h$
is increasing near $0$
and
$\lim_{t \to \infty} h(t) = \infty.$
Consequently, $h$
is strictly increasing on $(0, \infty)$.
\end{proof}

\begin{remark}\rm
    Note that if the exponent $q(x)$
is negative in the above lemma, 
the result follows trivially.
Indeed, $h(t)$
will be a product of two strictly 
increasing functions.
\end{remark}

\begin{lemma}\label{lm:norm_bounds}
Let us define the functionals
\begin{align*}
    P(u) & = 
\int_{\Omega} |\grad u|^{p(x)}\,dx 
+ \iint_{\mathbb{R}^{2N}} 
\frac{|u(x) - u(y)|^{p(x,y)}}
{|x - y|^{N + s p(x,y)}}\,dx\,dy,
 \\
    Q(u) & = 
\int_{\Omega} a(x) (u_+)^{1 - \gamma(x)}\,dx,
    \quad 
    R(u) = 
\int_{\Omega} b(x) (u_+)^{r(x)}\,dx.
\end{align*}
Then, the following properties hold:

\begin{enumerate}
    
\item[(i)] If $\|u\|_{\X} > 1,$
then
    \begin{equation*} 
        \|u\|_{\X}^{p^-} \leq P(u) 
        \leq \|u\|_{\X}^{p^+}.
    \end{equation*}
    If $\|u\|_{\X} \leq 1,$
then
    \begin{equation*} 
        \|u\|_{\X}^{p^+} \leq P(u) 
        \leq \|u\|_{\X}^{p^-}.
    \end{equation*}

\item[(ii)] The functional $Q(u)$
satisfies the following inequality:
    \begin{equation*}
        Q(u) \leq C_1 \max 
\left\{ \|u\|_{\X}^{1 - \gamma^+}, 
\|u\|_{\X}^{1 - \gamma^-} 
\right\},
    \end{equation*}
    where $C_1 > 0$
is a constant depending on $N$
and $p(x),$
i.e., $C_1 = C_1(s,N, p(x),a(x),\om)$.

\item[(iii)] The functional $R(u)$
satisfies:
    \begin{equation*} 
        R(u) \leq C_2 \max 
\left\{ \|u\|_{\X}^{r^+}, \|u\|_{\X}^{r^-} 
\right\},
    \end{equation*}
    where $C_2>0$
is a constant depending on $N$
and $p(x),$
i.e., $C_2 = C_2(s,N, p(x),b(x),\om)$.
\end{enumerate}
\end{lemma}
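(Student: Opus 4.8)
The plan is to prove each of the three estimates separately, using in each case the modular--norm relations recalled above (for $\rho_{\X}$, and for the Luxemburg norm on the relevant variable Lebesgue space) together with the embedding in Lemma~\ref{lm:space_embedd} and the two-exponent inequality of Lemma~\ref{lemA1}.

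For part~(i), observe that $P(u) = \rho_{\X}(u)$ by the definition of the modular function $\rho_{\X}$. Hence the claimed bounds are precisely items~(ii) and~(iii) of the modular--norm lemma for $\rho_{\X}$: if $\|u\|_{\X}>1$ then $\|u\|_{\X}^{p^-}\le \rho_{\X}(u)\le \|u\|_{\X}^{p^+}$, and if $\|u\|_{\X}\le 1$ the inequalities reverse. So part~(i) requires essentially no work beyond this identification.

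For part~(iii), I would apply the H\"older-type inequality (the variable-exponent Lemma) with the pair of conjugate exponents $\frac{p^*(x)}{p^*(x)-r(x)}$ and $\frac{p^*(x)}{r(x)}$, which are indeed conjugate and admissible because of $(B_1)$ and $(G_2)$. This gives
\begin{equation*}
R(u) = \int_\Omega b(x)(u_+)^{r(x)}\,dx \le C\,\|b\|_{L^{p^*(x)/(p^*(x)-r(x))}}\,\big\||u|^{r(x)}\big\|_{L^{p^*(x)/r(x)}}.
\end{equation*}
Next I apply Lemma~\ref{lemA1} with $\nu_1(x)=r(x)$ and $\nu_2(x)=\frac{p^*(x)}{r(x)}$ (noting $\nu_1\nu_2 = p^*(x)\ge 1$) to bound $\big\||u|^{r(x)}\big\|_{L^{p^*(x)/r(x)}}$ by $\|u\|_{L^{p^*(x)}}^{r^-}+\|u\|_{L^{p^*(x)}}^{r^+}$, and finally Lemma~\ref{lm:space_embedd} gives $\|u\|_{L^{p^*(x)}}\le\|u\|_{\X}$. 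Absorbing $\|b\|$ and the embedding constant into $C_2$, and using that $a^{r^-}+a^{r^+}\le 2\max\{a^{r^-},a^{r^+}\}$, yields the asserted bound. Part~(ii) follows by the identical argument with $r(x)$ replaced by $1-\gamma(x)$ and $a(x)$ in place of $b(x)$: here the conjugate exponents are $\frac{p^*(x)}{p^*(x)-(1-\gamma(x))}$ and $\frac{p^*(x)}{1-\gamma(x)}$, which are admissible by $(A_1)$ and $(G_1)$ (in particular $0<1-\gamma(x)<1<p^*(x)$), and Lemma~\ref{lemA1} is applied with $\nu_1(x)=1-\gamma(x)$, $\nu_2(x)=\frac{p^*(x)}{1-\gamma(x)}$.

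The only mild subtlety—and the one point worth stating carefully rather than as a routine computation—is the application of Lemma~\ref{lemA1} in part~(ii): one must check the hypothesis $\nu_1(x)\nu_2(x)\ge 1$, which here reads $p^*(x)\ge 1$ and is clear, and that $\nu_1 = 1-\gamma(x)$ is nonnegative and not identically zero, which follows from $(G_1)$. No other obstacle arises; the argument is a bookkeeping exercise in the variable-exponent H\"older and power inequalities, and the constants $C_1,C_2$ depend only on $s,N,p(\cdot)$, the weights, and $\Omega$ through the embedding constant of Lemma~\ref{lm:space_embedd} and the norms $\|a\|,\|b\|$ in the spaces prescribed by $(A_1)$ and $(B_1)$.
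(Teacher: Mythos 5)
Your proposal is correct and follows essentially the same route as the paper: identifying $P(u)=\rho_{\X}(u)$ for part~(i), and for parts~(ii) and~(iii) applying the variable-exponent H\"older inequality with conjugate pair $\bigl(\tfrac{p^*(x)}{1-\gamma(x)},\,\tfrac{p^*(x)}{p^*(x)-(1-\gamma(x))}\bigr)$ (resp.\ $\bigl(\tfrac{p^*(x)}{r(x)},\,\tfrac{p^*(x)}{p^*(x)-r(x)}\bigr)$), then Lemma~\ref{lemA1}, and finally the embedding Lemma~\ref{lm:space_embedd}. The only differences are cosmetic: you spell out the $\nu_1,\nu_2$ data for Lemma~\ref{lemA1} and do (iii) in detail with (ii) by analogy, whereas the paper does (ii) in detail with (iii) by analogy.
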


\begin{proof}[\bf Proof.]
$(i)$ Follows directly by 
observing that $P(u) = \rho_{\X}(u)$.

$(ii)$
Using Lemma \ref{lemA1} and 
H\"older inequality with $t(x) = 
\frac{p^{*}(x)}{1 - \gamma(x)}$
and conjugate exponent $t'(x)=
\frac{p^*(x)}{p^*(x)-(1-\ga(x))},$
from $(A_1),$
we obtain
    \begin{align*}
        Q(u) &= 
\int_{\Omega} a(x)(u_+)^{1 - \gamma(x)}\,dx \\
&\leq \|a(x)\|_{L^{t'(x)}(\Omega)} 
\| (u_+)^{1 - \gamma(x)} \|_{L^{t(x)}(\Omega)} \\
&\leq \|a(x)\|_{L^{t'(x)}(\Omega)} 
\max \left\{ \|u\|_{L^{p^{*}(x)}
(\om)}^{1 - \gamma^+}, 
\|u\|_{L^{p^{*}(x)}(\om)}^{1 - \gamma^-}
 \right\} \\
&\leq C_1 \max 
\left\{ \|u\|_{\X}^{1 - \gamma^+}, 
\|u\|_{\X}^{1 - \gamma^-}
 \right\}.
    \end{align*}
    In the final inequality, we have used the Sobolev
     embedding and Lemma \ref{lm:space_embedd}.

$(iii)$
 By applying H\"older inequality, 
the Sobolev embedding theorem, and 
Lemma~\ref{lm:space_embedd}, and proceeding 
similarly to part $(ii),$
we obtain the result.\qedhere
\end{proof}

\begin{lemma} \label{lem:empty_nehari_0}
There exists $\Lambda > 0$
such that for all $\lambda \in (0, \Lambda),$
we have
\begin{equation*}
    \mathcal{N}_\lambda^0 = \{0\}.
\end{equation*}
\end{lemma}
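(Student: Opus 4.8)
The plan is to argue by contradiction: I will show that any nonzero $u \in \mathcal{N}_\lambda^0$ must obey two competing estimates on $\|u\|_{\X}$ that are incompatible once $\lambda$ is small, so that for a suitable $\Lambda>0$ and all $\lambda \in (0,\Lambda)$ the set $\mathcal{N}_\lambda^0$ contains no nonzero element. Throughout I keep the functionals $P(u), Q(u), R(u)$ of Lemma \ref{lm:norm_bounds}, so that by \eqref{eq:first_derivative} membership $u \in \mathcal{N}_\lambda$ is the identity $P(u) = \lambda Q(u) + R(u)$, while $u \in \mathcal{N}_\lambda^0$ adds the relation $\phi_u''(1)=0$ from \eqref{eq:second_derivative}. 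I first record that every $u \in \mathcal{N}_\lambda$ has $u_+ \not\equiv 0$, since otherwise $Q(u)=R(u)=0$ and the Nehari identity would force $P(u)=\rho_{\X}(u)=0$, i.e. $u=0$.

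The core step is to read off structural inequalities from $\phi_u''(1)=0$. In \eqref{eq:second_derivative} I replace each variable exponent by the appropriate extreme value, using $p^- \le p(x), p(x,y) \le p^+$, $0<\gamma^- \le \gamma(x) \le \gamma^+ <1$ and $r^- \le r(x) \le r^+$, and then substitute the Nehari identity in the form $\lambda Q(u) = P(u) - R(u)$. This yields $\theta_2\,P(u) \le R(u) \le \theta_1\,P(u)$ with $\theta_1 := (p^+-1+\gamma^+)/(r^--1+\gamma^+)$ and $\theta_2 := (p^--1+\gamma^-)/(r^+-1+\gamma^-)$. Here $\theta_2>0$ because $p^->1$, and crucially $\theta_1<1$ because $(G_2)$ gives $p^+<r^-$: this is the one spot where superlinearity of the perturbation is genuinely used. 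From $R(u) \le \theta_1 P(u)$ and the Nehari identity I also obtain $\lambda Q(u) = P(u) - R(u) \ge (1-\theta_1)\,P(u)$ with $1-\theta_1>0$.

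It then remains to convert $R(u)\ge \theta_2 P(u)$ and $\lambda Q(u)\ge (1-\theta_1)P(u)$ into bounds on $\|u\|_{\X}$ via Lemma \ref{lm:norm_bounds}; recall in particular $P(u)=\rho_{\X}(u)$, which (after distinguishing $\|u\|_{\X}\ge 1$ and $\|u\|_{\X}<1$) is bounded below by $\min\{\|u\|_{\X}^{p^-},\|u\|_{\X}^{p^+}\}$. Combining $\lambda Q(u)\ge (1-\theta_1)P(u)$ with $Q(u)\le C_1\max\{\|u\|_{\X}^{1-\gamma^+},\|u\|_{\X}^{1-\gamma^-}\}$ and using $1-\gamma^\pm<1<p^\pm$ produces an upper bound $\|u\|_{\X}\le K(\lambda)$ with $K(\lambda)\to 0$ as $\lambda\to 0^+$ (in fact the regime $\|u\|_{\X}\ge 1$ becomes impossible once $\lambda$ is small). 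Conversely, combining $R(u)\ge \theta_2 P(u)$ with $R(u)\le C_2\max\{\|u\|_{\X}^{r^+},\|u\|_{\X}^{r^-}\}$ and using $r^\pm>p^\pm$ produces a lower bound $\|u\|_{\X}\ge c$ with $c>0$ independent of $\lambda$. Choosing $\Lambda>0$ so small that $K(\lambda)<\min\{1,c\}$ for $\lambda \in (0,\Lambda)$ yields the contradiction, and hence $\mathcal{N}_\lambda^0=\{0\}$ on that range of $\lambda$.

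The anticipated difficulty is bookkeeping rather than conceptual. Because $P, Q, R$ are linked to $\|u\|_{\X}$ only through two-sided modular estimates involving the distinct exponents $p^\pm$, $r^\pm$, $1-\gamma^\pm$, every comparison of powers of $\|u\|_{\X}$ has to be run separately in the regimes $\|u\|_{\X}\ge 1$ and $\|u\|_{\X}<1$, and one must check in each regime that the relevant exponent gaps ($1-\gamma^+<p^+$, $p^+<r^-$) are oriented correctly so that the exponent of $\|u\|_{\X}$ appearing after simplification has the right sign. The single substantive point is that the lower bound $c$ does not depend on $\lambda$: it arises purely from the superlinear term through the $\lambda$-free inequality $R(u)\ge \theta_2 P(u)$, so it cannot shrink with $\lambda$ to keep up with the vanishing upper bound $K(\lambda)$.
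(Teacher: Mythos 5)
Your proposal is correct and follows essentially the same strategy as the paper: argue by contradiction, combine $\phi_u''(1)=0$ with the Nehari identity $P=\lambda Q+R$ to extract a $\lambda$-dependent upper bound on $\|u\|_{\X}$ (via $Q$) and a $\lambda$-independent lower bound (via $R\ge\theta_2 P$), then treat the regimes $\|u\|_{\X}<1$ and $\|u\|_{\X}\ge1$ separately using Lemma \ref{lm:norm_bounds} and take $\Lambda$ small enough to make the two bounds incompatible. Your intermediate reformulation $\theta_2 P\le R\le\theta_1 P$ is just a rearrangement of the paper's estimates — $1-\theta_1=(r^--p^+)/(r^--1+\gamma^+)$ recovers the paper's constant exactly — so the two proofs coincide in substance.
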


\begin{proof}[\bf Proof.]
We prove this lemma by contradiction.
Let $0 \neq u \in \mathcal{N}_\lambda^0.$
Since $u \in \mathcal{N}_\lambda$
from \eqref{eq:nehari_manifold}, we have $\phi'(1) = 0.$
This implies that
\begin{align*} 
&
\int_\Omega |\grad u|^{p(x)} \, dx + 
\iint_{\mathbb{R}^{2N}} 
\frac{|u(x) - u(y)|^{p(x,y)}}
{|x - y|^{N + s p(x,y)}} \, dx \, dy \\
- 
  \lambda 
& \int_\Omega a(x) (u_+)^{1 - \gamma(x)} \, dx 
- 
\int_\Omega b(x) (u_+)^{r(x)} \, dx = 0.
\end{align*} 
That is,
\begin{equation*} 
P(u) - \lambda Q(u) - R(u) = 0.
\end{equation*}

 Now, suppose $\|u\|_{\X} \leq 1.$
Then using \eqref{eq:second_derivative} and 
Lemma \ref{lm:norm_bounds} yields
\begin{align*}
0 = \phi_u''(1) &\leq (p^+ - 1) P(u) 
+ \lambda \gamma^+ Q(u) - (r^- - 1) R(u) \\
&= (p^+ - 1) P(u) + \lambda \gamma^+ Q(u) - 
(r^- - 1) (P(u) - \lambda Q(u)) \\
&= 
\left (p^+ - r^- 
\right ) P(u) + \lambda (\gamma^+ + r^- - 1) Q(u) \\
&\leq 
\left (p^+ - r^- 
\right ) \|u\|_{\X}^{p^+} + \lambda 
(\gamma^+ + r^- - 1) C_1 \|u\|_{\X}^{1 - \gamma^+}.
\end{align*}
Thus,
\begin{equation} \label{eq:upper_bound_norm}
\|u\|_{\X}^{p^+ -1+ \gamma^+} \leq 
\frac{\lambda C_1 
\left ( \gamma^+ + r^- - 1 
\right )}{r^- - p^+}.
\end{equation}

 Next, we consider another estimate
\begin{align*}
0 = \phi_u''(1) &\geq (p^- - 1) P(u) 
+ \gamma^- (P(u) - R(u)) - (r^+ - 1) R(u) \\
&= (p^- - 1 + \gamma^-) P(u) - (\gamma^- 
+ r^+ - 1) R(u) \\
&\geq (p^- - 1 + \gamma^-) \|u\|_{\X}^{p^+} 
- C_2(\gamma^- + r^+ - 1) \|u\|_{\X}^{r^-}.
\end{align*}
Therefore,
\begin{equation} \label{eq:lower_bound_norm}
\|u\|_{\X}^{r^- - p^+} \geq 
\frac{p^- - 1 + \gamma^-}{C_2(\gamma^- + r^+ - 1)}
\end{equation}
Consequently, from 
\eqref{eq:upper_bound_norm} 
and \eqref{eq:lower_bound_norm}, 
we deduce
\begin{equation*} 
\lambda \geq 
\frac{r^- - p^+}{C_1 (\gamma^+ + r^- - 1)} 
\Big [ 
\frac{p^- - 1 + \gamma^-}{C_2 (\gamma^- + r^+ - 1)} 
\Big ]^{
\frac{(p^+ -1 + \gamma^+)}{r^- - p^+}} := \Lambda_1.
\end{equation*}
 Now, if $\|u\|_{\X} \geq 1,$
then
\begin{align*}
0 = \phi_u''(1) &\leq (p^+ - r^-) \|u\|_{\X}^{p^-} 
+ \lambda (\gamma^+ + r^- - 1) C_1 
\|u\|_{\X}^{1 - \gamma^{-}}, 
\end{align*}
which gives
\begin{equation*}
\|u\|_{\X}^{p^- - 1 + \gamma^-} \leq 
\frac{\lambda C_1 (\gamma^+ + r^- - 1)}{r^- - p^+}.
\end{equation*}
 Similarly, from the lower bound
\begin{align*}
0 = \phi_u''(1) &\geq (p^- - 1 + \gamma^-) 
P(u) - (\gamma^- + r^+ - 1) R(u) \\
&\geq (p^- - 1 + \gamma^-) \|u\|_{\X}^{p^-} 
- (\gamma^- + r^+ - 1) C_2\|u\|_{\X}^{r^+},
\end{align*}
so that
\begin{equation}\label{eq:lower_bound_nehari}
\|u\|_{\X}^{r^+ - p^-} \geq 
\frac{p^- - 1 + \gamma^-}{C_2(\gamma^- + r^+ - 1)}.
\end{equation} 
Consequently, from the second case, we obtain
\begin{equation*}
\lambda \geq 
\frac{r^- - p^+}{C_1(\gamma^+ + r^- - 1)} 
\left[ 
\frac{p^- - 1 + \gamma^-}
{C_2(\gamma^- + r^+ - 1)} \right]^{
\frac{p^- - 1 + \gamma^-}
{r^+ - p^-}} := \Lambda_2.
\end{equation*}
 Choose 
$
\Lambda := \min\{\Lambda_1, \Lambda_2\},
$
then we have our result.
\end{proof}

\begin{lemma}\label{lm:bdd_blw}
The functional $J_\lambda$
is coercive and bounded below on 
$\mathcal{N}_\lambda$.
\end{lemma}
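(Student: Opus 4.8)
The plan is to exploit the Nehari constraint to remove the superlinear term from the energy and then to estimate everything in terms of the modular-type functionals $P(u)$, $Q(u)$, $R(u)$ of Lemma~\ref{lm:norm_bounds}. Fix $u\in\Nh$. By definition $\phi_u'(1)=0$, which in the notation of Lemma~\ref{lm:norm_bounds} reads
\begin{equation*}
P(u)-\lambda Q(u)-R(u)=0,\qquad\text{i.e.}\qquad R(u)=P(u)-\lambda Q(u).
\end{equation*}
(In particular $R(u)\ge 0$ forces $P(u)\ge\lambda Q(u)$, a fact we will not need but which indicates the estimates are consistent.)

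Next I would bound $J_\lambda(u)$ from below coefficientwise. Using $1<p^-\le p(x),p(x,y)\le p^+$, $0<1-\gamma^+\le 1-\gamma(x)\le 1-\gamma^-$ and $p^+<r^-\le r(x)\le r^+$, the elementary inequalities $\tfrac1{p(x)}\ge\tfrac1{p^+}$, $\tfrac1{1-\gamma(x)}\le\tfrac1{1-\gamma^+}$ and $\tfrac1{r(x)}\le\tfrac1{r^-}$ applied to \eqref{eq:eneryg_functional} give
\begin{equation*}
J_\lambda(u)\ \ge\ \frac{1}{p^+}P(u)-\frac{\lambda}{1-\gamma^+}Q(u)-\frac{1}{r^-}R(u).
\end{equation*}
Substituting $R(u)=P(u)-\lambda Q(u)$ yields
\begin{equation*}
J_\lambda(u)\ \ge\ \Big(\frac{1}{p^+}-\frac{1}{r^-}\Big)P(u)-\lambda\Big(\frac{1}{1-\gamma^+}-\frac{1}{r^-}\Big)Q(u)=:C_3\,P(u)-\lambda C_4\,Q(u),
\end{equation*}
where $C_3>0$ since $p^+<r^-$, and $C_4>0$ since $1-\gamma^+<1<r^-$. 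This is the key identity; everything afterwards is bookkeeping with the exponents.

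Finally I would split according to the size of $\|u\|_{\X}$. If $\|u\|_{\X}\le 1$, Lemma~\ref{lm:norm_bounds} gives $P(u)\ge 0$ and $Q(u)\le C_1\|u\|_{\X}^{1-\gamma^+}\le C_1$, hence $J_\lambda(u)\ge -\lambda C_1 C_4$, a bound independent of $u$. If $\|u\|_{\X}>1$, Lemma~\ref{lm:norm_bounds} gives $P(u)\ge\|u\|_{\X}^{p^-}$ and $Q(u)\le C_1\|u\|_{\X}^{1-\gamma^-}$, so
\begin{equation*}
J_\lambda(u)\ \ge\ C_3\,\|u\|_{\X}^{p^-}-\lambda C_1 C_4\,\|u\|_{\X}^{1-\gamma^-}.
\end{equation*}
Since $1-\gamma^-<1<p^-$, the right-hand side is a continuous function of $\|u\|_{\X}\in[1,\infty)$ which tends to $+\infty$; hence it is bounded below on $[1,\infty)$ and tends to $+\infty$ as $\|u\|_{\X}\to\infty$. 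Combining the two regimes proves that $J_\lambda$ is bounded below on $\Nh$, and the last display (valid for large norms) proves $J_\lambda(u)\to\infty$ as $\|u\|_{\X}\to\infty$ along $\Nh$, i.e. coercivity.

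There is no serious obstacle here: the only point requiring care is the exponent bookkeeping — choosing the right one-sided bound for each variable exponent so that, after eliminating $R(u)$ via the Nehari identity, both remaining coefficients $C_3,C_4$ come out strictly positive, and checking that the ``bad'' exponent $1-\gamma^-$ is strictly smaller than the ``good'' growth exponent $p^-$ so that the leading term dominates.
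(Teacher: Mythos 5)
Your proof is correct and follows essentially the same route as the paper: write the Nehari constraint as $R(u)=P(u)-\lambda Q(u)$, bound $J_\lambda$ below coefficientwise by $\tfrac1{p^+}P-\tfrac{\lambda}{1-\gamma^+}Q-\tfrac1{r^-}R$, substitute, and then use Lemma~\ref{lm:norm_bounds} to conclude from $p^->1-\gamma^-$. The only (inessential) difference is that you treat the case $\|u\|_{\X}\le 1$ explicitly to get a uniform lower bound there, whereas the paper only works out the regime $\|u\|_{\X}>1$, and you keep the slightly sharper coefficient $\tfrac{1}{1-\gamma^+}-\tfrac{1}{r^-}$ where the paper's display has $\tfrac{1}{1-\gamma^+}+\tfrac{1}{r^-}$; both are immaterial to the conclusion.
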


\begin{proof}[\bf Proof.]
Let $u \in \mathcal{N}_\lambda.$
Suppose $\|u\|_{\X} > 1.$
Then
\begin{align*}
J_\lambda(u) &= 
\frac{1}{p^+} P(u) - 
\frac{\lambda}{1 - \gamma^+} Q(u) - 
\frac{1}{r^-} R(u) \\
&\geq 
\Big ( 
\frac{1}{p^+} - 
\frac{1}{r^-} 
\Big ) \|u\|_{\X}^{p^-} - \lambda 
\Big ( 
\frac{1}{1 - \gamma^+} + 
\frac{1}{r^-} 
\Big ) \|u\|_{\X}^{1 - \gamma^-}.
\end{align*}
Thus, $J_\lambda(u) \to \infty$
as $\|u\|_{\X} \to \infty,$
proving coercivity and boundedness.
\end{proof}

\smallskip

\begin{lemma}
Suppose $\lambda \in (0, \Lambda),$
then $\mathcal{N}_\lambda^-$
is closed in the $\X$
topology.
\end{lemma}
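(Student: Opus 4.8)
The claim is that $\mathcal{N}_\lambda^-$ is closed in $\X$ whenever $\lambda \in (0,\Lambda)$, where $\Lambda$ is the constant furnished by Lemma~\ref{lem:empty_nehari_0}. The plan is to take a sequence $(u_n) \subset \mathcal{N}_\lambda^-$ with $u_n \to u$ in $\X$ and show $u \in \mathcal{N}_\lambda^-$. The first order of business is to rule out $u = 0$: this is where the hypothesis $\lambda < \Lambda$ enters. For any $v \in \mathcal{N}_\lambda^-$ one has $\phi_v'(1) = 0$ and $\phi_v''(1) < 0$, and combining these two relations exactly as in the proof of Lemma~\ref{lem:empty_nehari_0} (substituting $R(v) = P(v) - \lambda Q(v)$ into the inequality $\phi_v''(1) < 0$, then using the bounds of Lemma~\ref{lm:norm_bounds}) yields a lower bound $\|v\|_{\X} \geq c_0 > 0$ with $c_0$ independent of $v$ (the bound is of the form $\|v\|_{\X}^{r^- - p^+} \geq \frac{p^--1+\gamma^-}{C_2(\gamma^-+r^+-1)}$ or its $\|v\|_\X \le 1$ analogue; one keeps whichever is relevant). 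Passing to the limit gives $\|u\|_{\X} \geq c_0 > 0$, so $u \neq 0$.

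Next I would pass to the limit in the Nehari identity and in the sign condition. Since $u_n \to u$ in $\X$, Lemma~\ref{lem:modular_cgs_prob} gives modular convergence, and together with the compact embeddings $\X \hookrightarrow L^{q(x)}$ (Lemma~\ref{lm:space_embedd}) for the subcritical exponents $1-\gamma(x)$ and $r(x)$, all four functionals $P, Q, R$ and the weighted integrals appearing in $\phi'_{\cdot}(1)$ and $\phi''_{\cdot}(1)$ are continuous along the sequence. In particular $\phi'_{u_n}(1) \to \phi'_u(1)$ forces $\phi'_u(1) = 0$, i.e. $u \in \mathcal{N}_\lambda$, and $\phi''_{u_n}(1) \to \phi''_u(1)$ with $\phi''_{u_n}(1) < 0$ gives $\phi''_u(1) \leq 0$. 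Thus $u \in \mathcal{N}_\lambda^- \cup \mathcal{N}_\lambda^0$.

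Finally one eliminates the possibility $u \in \mathcal{N}_\lambda^0$. But $\lambda \in (0,\Lambda)$ and Lemma~\ref{lem:empty_nehari_0} give $\mathcal{N}_\lambda^0 = \{0\}$, while we have already shown $u \neq 0$; hence $u \in \mathcal{N}_\lambda^-$, which proves $\mathcal{N}_\lambda^-$ is closed. I expect the continuity of $Q$ — that is, continuity of $u \mapsto \int_\Omega a(x)(u_+)^{1-\gamma(x)}\,dx$ along a strongly convergent sequence — to be the most delicate point, since the exponent $1-\gamma(x)$ is less than one and the integrand is only Hölder, not Lipschitz, in $u$; this is handled by the compact embedding into $L^{p^*(x)}$ combined with the Hölder/Lemma~\ref{lemA1} estimates already used in the proof of Lemma~\ref{lm:norm_bounds}, extracting an a.e.-convergent subsequence dominated via the integrability of $a(x)$ in $L^{t'(x)}$. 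One should also note the harmless subtlety that the argument shows every subsequence has a further subsequence with $u$ in $\mathcal{N}_\lambda^-$, which suffices since the limit $u$ is fixed.
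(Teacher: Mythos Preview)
Your proposal is correct and follows essentially the same route as the paper: take a convergent sequence in $\mathcal{N}_\lambda^-$, pass to the limit in the identities $\phi'_{u_n}(1)=0$ and $\phi''_{u_n}(1)<0$ using modular convergence together with the compact embedding, and then use the uniform lower bound $\|v\|_{\X}\ge c_0$ for $v\in\mathcal{N}_\lambda^-$ (obtained exactly as in \eqref{eq:lower_bound_norm}--\eqref{eq:lower_bound_nehari}) plus Lemma~\ref{lem:empty_nehari_0} to rule out $u\in\mathcal{N}_\lambda^0$. The only cosmetic difference is the order in which you exclude $u=0$ and perform the limit passage; your added remarks on the continuity of $Q$ and the subsequence subtlety are sound refinements of the same argument.
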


\begin{proof}[\bf Proof.]
Let $\{w_k\}$
be a sequence in $\mathcal{N}_\lambda^-$
such that $w_k \to w$
in $\X.$
Then
\[
\lim_{k \to \infty} \rho_{\X}(w_k - w) = 0.
\]
  As $w_k \in \mathcal{N}_\lambda,$
we have
\begin{align*}
\int_\Omega |w_k|^{p(x)} \, dx 
&+ \iint_{\mathbb{R}^{2N}} 
\frac{|w_k(x) - w_k(y)|^{p(x,y)}}
{|x - y|^{N + s p(x,y)}} \, dx \, dy \\
&= \lambda 
\int_\Omega a(x) (w_k(x))_+^{1 - \gamma(x)} \, dx + 
\int_\Omega b(x) (w_k(x))_+^{r(x)} \, dx.
\end{align*}  
By convergence of $w_k \to w$
in $\X,$
we have
\begin{align*}
\int_\Omega |\grad w|^{p(x)} \, dx 
&+ \iint_{\mathbb{R}^{2N}} 
\frac{|w(x) - w(y)|^{p(x,y)}}
{|x - y|^{N + s p(x,y)}} \, dx \, dy \\
&= \lambda 
\int_\Omega a(x) |w(x)|^{1 - \gamma(x)} \, dx + 
\int_\Omega b(x) |w(x)|^{r(x)} \, dx.
\end{align*}
Hence, $w \in \mathcal{N}_\lambda$.

  Similarly, we analyse
\begin{align*}
\phi''_u(1) & =
\int_\Omega p(x) |\grad w|^{r(x)} \, dx+ 
\iint_{\mathbb{R}^{2N}} 
\frac{p(x,y) |w(x)- w(y)|^{p(x,y)}}
{|x - y|^{N + s p(x,y)}} \, dx \, dy\\
& - \la 
\int_{\om}(1-\gamma(x))a(x)(w_+)^{1 - \gamma(x)}- 
\int_\Omega b(x) (w_+)^{r(x)} r(x) \, dx\\
& = \lim_{k \to \infty}
\Big [
\int_\Omega p(x) |w_k(x)|^{p(x)}   dx 
\! + \!
\iint_{\mathbb{R}^{2N}} 
\frac{p(x,y) |w_k(x) - w_k(y)|^{p(x,y)}}
{|x - y|^{N + s p(x,y)}}  dx  dy   
\\
&    - \lambda 
\int_\Omega (1 - \gamma(x)) a(x) 
(w_k(x))_+^{1 - \gamma(x)} \, dx 
- 
\int_\Omega b(x) (w_k(x))_+^{r(x)} r(x) \, dx
\Big ] \\
&\leq 0,
\end{align*}
which implies 
$w \in \mathcal{N}_\lambda^- \cap \mathcal{N}_\lambda^0.$
But following the same calculations as in 
\eqref{eq:lower_bound_nehari}, we have
\[
\|w\|_{\X} = \lim_{k \to \infty} \|w_k\|_{\X} 
\geq A_0 > 0.
\]
Thus, \( w \not\equiv 0 \).
Using the Lemma \ref{lem:empty_nehari_0}, 
we know that $\mathcal{N}_\lambda^0 = \{0\}.$
Consequently, 
\( w \in \mathcal{N}_\lambda^- \).
\end{proof}

We define
$$
\Th_\la=\inf_{u\in \mathcal{N}_\la}J_\la(u);
\quad 
\Th_\la^+=\inf_{u\in \mathcal{N}_\la^+}J_\la(u);
\quad 
\Th_\la^-=\inf_{u\in \mathcal{N}_\la^-}J_\la(u).
$$

\begin{lemma}\label{lm:Nehari_positive}
For $0<\la<\La,$
we have $\Th_\la\le\Th_\la^+<0.$
\end{lemma}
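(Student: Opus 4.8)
The plan is to exploit the structure of the fibering map $\phi_u$ to show that for every $u \in \X \setminus \{0\}$ there is a unique $t_+(u) > 0$ with $t_+(u) u \in \Nh^+$, and that $\phi_u(t_+(u)) < 0$; taking the infimum over such points then gives $\Th_\la \le \Th_\la^+ < 0$, while $\Th_\la \le \Th_\la^+$ is immediate from $\Nh^+ \subseteq \Nh$.

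First I would fix $u \in \X \setminus \{0\}$ and examine the behaviour of $\phi_u(t)$ for small $t$. Since $0 < 1 - \gamma^+ \le 1 - \gamma(x) \le 1-\gamma^- < 1$ and $a(x) > 0$ with $a \not\equiv 0$, the singular term $-\lambda \int_\Omega \frac{t^{1-\gamma(x)}}{1-\gamma(x)} a(x)(u_+)^{1-\gamma(x)}\,dx$ dominates as $t \to 0^+$: indeed the $P$-type terms are $O(t^{p^-})$ with $p^- > 1 > 1-\gamma^-$, and the $R$-type term is $O(t^{r^-})$ with $r^- > p^- > 1-\gamma^-$. Hence $\phi_u(t) < 0$ for $t > 0$ sufficiently small (one must check $u_+ \not\equiv 0$; if $u_+ \equiv 0$ then $u$ cannot lie on $\Nh$ at all since the left side of \eqref{eq:first_derivative} would be positive and the right side zero, so such $u$ are irrelevant to $\Th_\la$, and for the infimum computation we simply restrict to $u$ with $u_+ \not\equiv 0$). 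Next I would analyse $\phi_u'(t) = f_1(t) - \lambda f_2(t) - f_3(t)$ from \eqref{fibermap_derivative_f_i}--\eqref{functions}. Dividing by $t^{1-\gamma(x)}$-type scaling is delicate because of variable exponents, so instead I would argue: $\phi_u'(t) \to -\infty$ as $t \to 0^+$ (the $-\lambda f_2(t) \sim -\lambda f_2(0^+)\cdot$const blows up like $t^{-\gamma}$, while $f_1, f_3 \to 0$), and for large $t$, using Lemma \ref{lm:norm_bounds}(i) and $(iii)$ together with $(G_2)$ ($r^- > p^+$), the term $-f_3(t)$ eventually dominates $f_1(t)$, so $\phi_u'(t) \to -\infty$ as $t \to \infty$ as well; moreover $\phi_u'$ is strictly negative for small $t$, becomes positive on some middle interval (this is where $\lambda < \Lambda$ enters, ensuring $\Nh^0 = \{0\}$ by Lemma \ref{lem:empty_nehari_0}, so $\phi_u'$ actually changes sign), and then negative again. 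The first zero $t_+(u)$ is a local minimum of $\phi_u$ with $\phi_u''(t_+(u)) \ge 0$, and since $t_+(u) u \notin \Nh^0$ it satisfies $\phi_u''(t_+(u)) > 0$, i.e.\ $t_+(u) u \in \Nh^+$.

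The key remaining point is $\phi_u(t_+(u)) < 0$. Since $\phi_u(t) < 0$ for small $t$ and $t_+(u)$ is the smallest critical point of $\phi_u$, $\phi_u$ is monotone (in fact decreasing then we reach the first critical point) on $(0, t_+(u)]$; more carefully, on $(0, t_+(u))$ we have $\phi_u' < 0$ since $t_+(u)$ is the first zero and $\phi_u'$ starts negative, so $\phi_u$ is strictly decreasing on $(0, t_+(u)]$, hence $\phi_u(t_+(u)) < \lim_{t\to 0^+}\phi_u(t) = 0$. This gives $J_\la(t_+(u) u) = \phi_u(t_+(u)) < 0$, so $\Th_\la^+ = \inf_{v \in \Nh^+} J_\la(v) \le \phi_u(t_+(u)) < 0$, and taking into account $\Nh^+ \subseteq \Nh$ we get $\Th_\la \le \Th_\la^+ < 0$.

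The main obstacle I anticipate is making the sign-change analysis of $\phi_u'$ rigorous in the variable-exponent setting: one cannot simply factor out a power of $t$ as in the constant-exponent case, so the monotonicity of the relevant ratio (of $f_1 + f_3$-type quantities against $f_2$-type quantities, or the analysis leading to exactly one sign change of $\phi_u'$ on the region where it is positive) must be handled through Lemma \ref{lm:strictly_increas} and the degree-theoretic approximation Lemmas above. Concretely, I would show that $g_u(t) := \phi_u'(t)/(\text{leading local term})$ or an appropriate normalized version is eventually decreasing, or directly invoke Lemma \ref{lm:strictly_increas} with $g_1$ built from $|\nabla u|^{p(x)}$ and the nonlocal kernel and $g_2$ from $a(x)(u_+)^{1-\gamma(x)}$ (noting $1-\gamma(x) < 1 < p(x)$) to conclude that $t \mapsto (f_1(t)-f_3(t))/f_2(t)$ has the right monotonicity so that $\lambda f_2(t) = f_1(t) - f_3(t)$ — equivalently $\phi_u'(t) = 0$ — has the desired number of solutions, with the first being the $\Nh^+$ point. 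Once the uniqueness/ordering of the critical points is in hand, the negativity of the energy at $t_+(u)$ is the easy consequence described above.
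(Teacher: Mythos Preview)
Your approach is correct but takes a genuinely different route from the paper.

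The paper proceeds purely algebraically: for an arbitrary $u\in\Nh^+$ it uses the Nehari constraint $P(u)=\lambda Q(u)+R(u)$ to eliminate $Q(u)$ from the energy, obtaining
\[
J_\lambda(u)\le\Big(\tfrac{1}{p^-}-\tfrac{1}{1-\gamma^-}\Big)P(u)+\Big(\tfrac{1}{1-\gamma^-}-\tfrac{1}{r^+}\Big)R(u),
\]
and then uses $\phi_u''(1)>0$ (again with the constraint) to bound $R(u)\le\dfrac{p^+-(1-\gamma^+)}{r^--(1-\gamma^+)}\,P(u)$. Substituting, the bracketed coefficient in front of $P(u)$ is negative precisely by assumption $(G_3)$, yielding $J_\lambda(u)<0$ for every $u\in\Nh^+$ in one stroke. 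No fibering analysis is invoked.

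Your route is geometric: you argue that $\phi_u$ is strictly decreasing on $(0,t^+]$ and $\phi_u(0^+)=0$, hence $\phi_u(t^+)<0$. This is valid, but it effectively requires the content of Lemma~\ref{lm:fiber_map_analysis} (or at least its $t^+$ half), which in turn rests on the degree-theoretic monotonicity Lemmas~\ref{lem:function_increasing_degree}--\ref{lm:strictly_increas} and Lemma~\ref{lem:properties_of_fi}. One small correction: your sentence ``this is where $\lambda<\Lambda$ enters, ensuring $\Nh^0=\{0\}$ \dots\ so $\phi_u'$ actually changes sign'' conflates two things---Lemma~\ref{lem:empty_nehari_0} only guarantees that any zero of $\phi_u'$ is nondegenerate, not that a zero exists; the existence of the sign change is exactly the monotonicity/degree issue you flag at the end, and in the paper it is handled (in case $f_3>0$) only for $\lambda$ below a further threshold.

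In short: the paper's argument is shorter and self-contained but hinges on the structural hypothesis $(G_3)$, which is introduced essentially for this purpose; your argument bypasses $(G_3)$ entirely, at the cost of front-loading the fibering-map analysis that the paper develops afterwards.
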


\begin{proof}[\bf Proof.]
    Let $u\in \mathcal{N}_\la^+(\subset\mathcal{N}_\la).$
From \eqref{eq:eneryg_functional} and 
\eqref{eq:nehari_manifold}, we get 
    \begin{align}\label{eq3}
        J_\la(u)
        & = 
\int_\om 
\frac{1}{p(x)}|\grad u|^{p(x)}+\iint_{\mathbb{R}^{2N}} 
\frac{1}{p(x,y)}
\frac{|u(x)-u(y)|^{p(x,y)}}{|x-y|^{N+sp(x,y)}}    \notag \\
& -\la
\int_\om 
\frac{a(x)}{1-\ga(x)}(u_+)^{1-\ga(x)}  
 -
\int_\om 
\frac{b(x)}{r(x)}(u_+)^{r(x)}\nonumber\\
        &\le 
\frac{1}{p^-}P(u)-
\frac{\la}{1-\ga^-}Q(u)-
\frac{1}{r^+}R(u)\nonumber\\
        & \le 
\Big (
\frac{1}{p^-}-
\frac{1}{1-\ga^-} 
\Big )  P(u)+ 
\Big (
\frac{1}{1-\ga^-}-
\frac{1}{r^+} \Big )
 R(u).
    \end{align}
    Since $u\in \mathcal{N}_\la^+,$
from \eqref{eq:second_derivative}, we also have 
     \begin{align*}
0<\varphi''_u(1)&\le p^+P(u)-\la(1-\ga^+)Q(u)-r^-R(u)\\
        &\le (p^+-(1-\ga^+))P(u)-\la(r^--(1-\ga^+))R(u),
    \end{align*}
      that is,
       \begin{equation}\label{eq4}
R(u)\le 
\frac{p^+-(1-\ga^+)}{r^--(1-\ga^+)}P(u).
    \end{equation}
    From \eqref{eq3} and \eqref{eq4}, we can write
        \begin{align*}
        J_\la(u)
&\le \Big (
\frac{1}{p^-}-
\frac{1}{1-\ga^-} \Big ) P(u)+ 
\Big (
\frac{1}{1-\ga^-}-
\frac{1}{r^+} \Big ) R(u)\\
        & \le \Big (
\frac{1}{p^-}-
\frac{1}{1-\ga^-} \Big ) P(u)+ \Big (
\frac{1}{1-\ga^-}-
\frac{1}{r^+} \Big ) \Big (
\frac{p^+-(1-\ga^+)}{r^--(1-\ga^+)}\Big ) P(u)\\
        &\le 
\Big [ \Big (
\frac{1}{p^-}-
\frac{1}{1-\ga^-} \Big ) + \Big (
\frac{1}{1-\ga^-}-
\frac{1}{r^+} \Big ) \Big (
\frac{p^+-(1-\ga^+)}{r^--(1-\ga^+)}\Big ) \Big ] P(u).
    \end{align*}
    From assumption \eqref{eq5}, we conclude
    $$\inf_{u\in \mathcal{N}_\la^+}J_\la(u)< 0.$$
   Using the definition of $\Th_\la^+$
and $\Th_\la,$
we get the result.
\end{proof}

\begin{lemma}\label{lm:Nehari_negative}
For $0<\la<\frac{1-\ga^+}{p^+}\La,$
there exists a positive constant $K$
such that $\Th_\la^-\ge K$.
\end{lemma}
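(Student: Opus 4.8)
Since $\Th_\la^-=+\infty$ when $\mathcal{N}_\la^-=\emptyset$, I may assume $\mathcal{N}_\la^-\neq\emptyset$ and fix $u\in\mathcal{N}_\la^-$. The plan is to reproduce, for $J_\la$ itself, the two-regime scheme ($\|u\|_{\X}\le 1$ versus $\|u\|_{\X}\ge 1$) used in Lemma~\ref{lem:empty_nehari_0}: first extract a uniform lower bound for $\|u\|_{\X}$ from $\phi_u''(1)<0$, then bound $J_\la(u)$ from below by an expression in $P(u)$ and $Q(u)$ via the Nehari identity, and finally combine the two to get $J_\la(u)\ge K>0$ in the stated range of $\la$.

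\textbf{Lower bound on the norm.} From \eqref{eq:fibering_map_second_derivative} at $t=1$ together with the Nehari identity $\la Q(u)=P(u)-R(u)$ --- exactly the manipulation already made for $\mathcal{N}_\la^0$ in Lemma~\ref{lem:empty_nehari_0} --- one obtains $\phi_u''(1)\ge (p^--1+\ga^-)P(u)-(\ga^-+r^+-1)R(u)$, so $\phi_u''(1)<0$ forces $R(u)>\tfrac{p^--1+\ga^-}{\ga^-+r^+-1}P(u)$. Feeding this into Lemma~\ref{lm:norm_bounds}(i),(iii) yields $\|u\|_{\X}>\big[\tfrac{p^--1+\ga^-}{C_2(\ga^-+r^+-1)}\big]^{1/(r^--p^+)}$ in the regime $\|u\|_{\X}\le 1$ and $\|u\|_{\X}>\big[\tfrac{p^--1+\ga^-}{C_2(\ga^-+r^+-1)}\big]^{1/(r^+-p^-)}$ in the regime $\|u\|_{\X}\ge 1$ --- precisely the quantities that appear inside $\Lambda_1$ and $\Lambda_2$ in the proof of Lemma~\ref{lem:empty_nehari_0}.

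\textbf{Lower bound for $J_\la$.} Estimating $\tfrac1{p(x)},\tfrac1{p(x,y)}\ge\tfrac1{p^+}$, $\tfrac1{1-\ga(x)}\le\tfrac1{1-\ga^+}$, $\tfrac1{r(x)}\le\tfrac1{r^-}$ in \eqref{eq:eneryg_functional}, and then using $R(u)=P(u)-\la Q(u)$ on $\mathcal{N}_\la$, I get
\[
J_\la(u)\ \ge\ \Big(\tfrac1{p^+}-\tfrac1{r^-}\Big)P(u)-\la\Big(\tfrac1{1-\ga^+}-\tfrac1{r^-}\Big)Q(u)\ =:\ c_1P(u)-\la c_2Q(u),
\]
with $c_1,c_2>0$ by $(G_2)$. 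Lemma~\ref{lm:norm_bounds}(i),(ii) bounds $P(u)$ below by $\|u\|_{\X}^{p^+}$ or $\|u\|_{\X}^{p^-}$ and $Q(u)$ above by $C_1\|u\|_{\X}^{1-\ga^+}$ or $C_1\|u\|_{\X}^{1-\ga^-}$ according to the regime; since $p^\pm>1>1-\ga^\pm$, factoring out the smaller power reduces positivity of $J_\la(u)$ to $c_1\|u\|_{\X}^{\beta}>\la c_2C_1$ with $\beta=p^+-1+\ga^+$ (resp.\ $\beta=p^--1+\ga^-$), which by the first step holds once $\la<\tfrac{c_1}{c_2C_1}A_0^{\beta}$, where $A_0$ is the corresponding lower bound for $\|u\|_{\X}$. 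A short computation, using the definitions of $\Lambda_1,\Lambda_2$ and the cancellation $\tfrac{(1-\ga^+)(r^--p^+)}{p^+(\ga^++r^--1)}=\tfrac{c_1}{c_2}$, identifies $\tfrac{c_1}{c_2C_1}A_0^{\beta}$ with $\tfrac{1-\ga^+}{p^+}\Lambda_1$ in the first regime and with $\tfrac{1-\ga^+}{p^+}\Lambda_2$ in the second. Hence, for $\la<\tfrac{1-\ga^+}{p^+}\La=\tfrac{1-\ga^+}{p^+}\min\{\Lambda_1,\Lambda_2\}$, one has $J_\la(u)\ge K_1>0$ (resp.\ $K_2>0$), and $K:=\min\{K_1,K_2\}$ gives $\Th_\la^-\ge K$.

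\textbf{Main obstacle.} The modular estimates and the Nehari substitutions are already in place from Lemmas~\ref{lem:empty_nehari_0}--\ref{lm:Nehari_positive}, so the only delicate point is the constant-chasing in the last step: one has to carry the regime-dependent exponents through Lemma~\ref{lm:norm_bounds} correctly and recognize the cancellation above, so that the admissible window for $\la$ comes out exactly as $\bigl(0,\tfrac{1-\ga^+}{p^+}\La\bigr)$ rather than some strictly smaller, unrelated interval.
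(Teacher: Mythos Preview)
Your proposal is correct and follows essentially the same approach as the paper: the paper also splits into the two regimes $\|u\|_{\X}\le 1$ and $\|u\|_{\X}\ge 1$, derives the same norm lower bounds from $\phi_u''(1)<0$ (equations \eqref{eq:lower_bound_norm}, \eqref{eq:lower_bound_nehari}), estimates $J_\la(u)\ge\big(\tfrac{1}{p^+}-\tfrac{1}{r^-}\big)P(u)-\la\big(\tfrac{1}{1-\ga^+}-\tfrac{1}{r^-}\big)Q(u)$ via the Nehari identity, and then checks that the resulting threshold on $\la$ equals $\tfrac{1-\ga^+}{p^+}\La_i$ in each regime. Your constant-chasing (in particular the cancellation $\tfrac{c_1}{c_2}=\tfrac{(1-\ga^+)(r^--p^+)}{p^+(\ga^++r^--1)}$) is accurate and matches the paper's computation.
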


\begin{proof}[\bf Proof.]
Let 
$u\in \mathcal{N}_\la^-(\subset\mathcal{N}_\la).$
From \eqref{eq:lower_bound_norm} and 
\eqref{eq:lower_bound_nehari}, we have
        \begin{equation}\label{eq20}
            \begin{cases}
            \|u\|_{\X} \geq 
 \left[
\frac{p^- - 1 + \gamma^-}{C_2(\gamma^- + r^+ - 1)}\right]^
\frac{1}{{r^- - p^+}} &\text{ if }\|u\|_{\X}< 1,\\[5pt]
\|u\|_{\X} \geq \left[
\frac{p^- - 1 + \gamma^-}{C_2(\gamma^- + r^+ - 1)}\right]^
\frac{1}{{r^+ - p^-}} &\text{ if }\|u\|_{\X}> 1.
            \end{cases}
        \end{equation}
        First, we consider the case $\|u\|_{\X}<1.$
From \eqref{eq:eneryg_functional} and 
\eqref{eq:nehari_manifold}, using Lemma 
\ref{lm:norm_bounds} and equation 
\eqref{eq20}, we obtain
        \begin{align*}
        J_\la(u) & \ge 
\frac{1}{p^+}P(u)-
\frac{\la}{1-\ga^+}Q(u)-
\frac{1}{r^-}R(u)\\
         & \ge 
\frac{1}{p^+}P(u)-
\frac{\la}{1-\ga^+}Q(u)-
\frac{1}{r^-}(P(u)-\la Q(u))\\
         & \ge  
\Big (
\frac{1}{p^+}-
\frac{1}{r^-} 
\Big ) P(u)-\la   
\Big (
\frac{1}{1-\ga^+}-
\frac{1}{r^-}  
\Big ) Q(u)\\
         & \ge    
\Big (
\frac{1}{p^+}-
\frac{1}{r^-}  
\Big ) \|u\|_{\X}^{p^+}-\la   
\Big (
\frac{1}{1-\ga^+}-
\frac{1}{r^-}  
\Big ) C_1\|u\|_{\X}^{1-\ga^+}\\
         & \ge \|u\|_{\X}^{1-\ga^+}\lsb  
\Big (
\frac{1}{p^+}-
\frac{1}{r^-}  
\Big ) \|u\|_{\X}^{p^+-(1-\ga^+)}-\la C_1   
\Big (
\frac{1}{1-\ga^+}-
\frac{1}{r^-}  
\Big )\rsb\\
         & \ge
\Big [
\frac{p^- - 1 + \gamma^-}{C_2(\gamma^- + r^+ - 1)}
\Big ]^
\frac{1-\ga^+}{{r^- - p^+}}
\Big [  
\Big (
\frac{1}{p^+}-
\frac{1}{r^-}  
\Big ) 
\Big  (
\frac{p^- - 1 + \gamma^-}{C_2(\gamma^- + r^+ - 1)}
\Big  )^
\frac{p^+-(1-\ga^+)}{{r^- - p^+}}  \\
        &
        -\la C_1   
\Big (
\frac{1}{1-\ga^+}-
\frac{1}{r^-}  
\Big )\Big ]:=k_1
    \end{align*}
    We can observe that $k_1>0$
if 
    $$
    \Big [ \Big (
\frac{1}{p^+}-
\frac{1}{r^-} \Big ) 
\Big  (
\frac{p^- - 1 + \gamma^-}{C_2(\gamma^- + r^+ - 1)}
\Big  )^
\frac{p^+-(1-\ga^+)}{{r^- - p^+}}-\la C_1 \Big (
\frac{1}{1-\ga^+}-
\frac{1}{r^-} \Big )\Big ]>0$$
    which implies that 
     $$
     \la<
\frac{1-\ga^+}{p^+}
\frac{1}{C_1}\Big (
\frac{r^--p^+}{r^--(1-\ga^+)}\Big ) 
\Big (
\frac{p^- - 1 + \gamma^-}{C_2(\gamma^- + r^+ - 1)}
\Big )^
\frac{p^+-(1-\ga^+)}{{r^- - p^+}}=
\frac{1-\ga^+}{p^+}\La_1.$$
     Next, we consider the case 
$\|u\|_{\X}>1.$
As in the previous case, by applying 
\eqref{eq:eneryg_functional}, 
\eqref{eq:nehari_manifold}, 
Lemma~\ref{lm:norm_bounds}, and equation 
\eqref{eq20}, we obtain
     \begin{align*}
        J_\la(u) & \ge 
\frac{1}{p^+}P(u)-
\frac{\la}{1-\ga^+}Q(u)-
\frac{1}{r^-}R(u)\\
         & \ge  \Big (
\frac{1}{p^+}-
\frac{1}{r^-} \Big ) P(u)-\la \Big (
\frac{1}{1-\ga^+}-
\frac{1}{r^-} \Big ) Q(u)\\
         & \ge  \Big (
\frac{1}{p^+}-
\frac{1}{r^-} \Big ) \|u\|_{\X}^{p^-}-\la \Big (
\frac{1}{1-\ga^+}-
\frac{1}{r^-} \Big ) C_1\|u\|_{\X}^{1-\ga^-}\\
         & \ge \|u\|_{\X}^{1-\ga^-}\lsb\Big (
\frac{1}{p^+}-
\frac{1}{r^-} \Big ) \|u\|_{\X}^{p^+-(1-\ga^-)}-\la C_1 \Big (
\frac{1}{1-\ga^+}-
\frac{1}{r^-} \Big )\rsb\\
         & \ge  \Big [
\frac{p^- - 1 + \gamma^-}{C_2(\gamma^- + r^+ - 1)}\Big ]^
\frac{1-\ga^-}{{r^+ - p^-}}\Big [\Big (
\frac{1}{p^+}-
\frac{1}{r^-} \Big ) 
\Big  (
\frac{p^- - 1 + \gamma^-}{C_2(\gamma^- + r^+ - 1)}
\Big  )^
\frac{p^--(1-\ga^-)}{{r^+ - p^-}} \\
        &  -\la C_1 \Big (
\frac{1}{1-\ga^+}-
\frac{1}{r^-} \Big )\Big ]:=k_2
    \end{align*}
    We can observe that $k_2>0$
if 
    $$\Big [ \Big (
\frac{1}{p^+}-
\frac{1}{r^-} \Big ) 
\Big  (
\frac{p^- - 1 + \gamma^-}{C_2(\gamma^- + r^+ - 1)}
\Big  )^
\frac{p^--(1-\ga^-)}{{r^+ - p^-}}-\la C_1 \Big (
\frac{1}{1-\ga^+}-
\frac{1}{r^-} \Big )\Big ]>0$$
    which implies that 
     $$\la<
\frac{1-\ga^+}{p^+}
\frac{1}{C_1}\Big (
\frac{r^--p^+}{r^--(1-\ga^+)}\Big ) 
\Big  (
\frac{p^- - 1 + \gamma^-}{C_2(\gamma^- + r^+ - 1)}
\Big  )^
\frac{p^--(1-\ga^-)}{{r^+ - p^-}}=
\frac{1-\ga^+}{p^+}\La_2.$$
     Choosing $K=\min\{k_1,k_2\}>0,$
we get the result.
\end{proof}
Now, with all the required lemmas in hand, 
we can prove the following results.

\begin{lemma}\label{lem:properties_of_fi}
Let $f_1, f_2, f_3$
be as defined in \eqref{functions}.
Then, the following properties hold:
\begin{enumerate}
    
\item[$(1)$] $\displaystyle \lim_{t \to 0^+} 
\frac{f_3(t)}{f_1(t)} = 0$.
    
\item[$(2)$] $\displaystyle \lim_{t \to 0} f_2(t) = \infty$.
    
\item[$(3)$] $\displaystyle \lim_{t \to 0} 
\frac{(f_1 - f_3)(t)}{f_2(t)} = 0$.
    
\item[$(4)$] $f_1 - f_3$
has a unique point of maxima, namely $t_{\max}$.
    
\item[$(5)$] There exists $\tilde{t} \in (0, t_{\max})$
such that $ 
\frac{f_1 - f_3}{f_2}$
is strictly increasing on $(0, \tilde{t})$.
\end{enumerate}
\end{lemma}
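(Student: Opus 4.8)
The plan is to analyze the five claims in order, exploiting the explicit power‑type structure of $f_1, f_2, f_3$ together with the degree/approximation machinery established in Lemmas \ref{lem:function_increasing_degree} and \ref{lm:strictly_increas}. Recall from \eqref{functions} that $f_1$ is a sum of terms behaving like $t^{p(x)-1}$ with $p(x)-1 \ge p^- - 1 > 0$, that $f_2$ behaves like $t^{-\gamma(x)}$ with $-\gamma(x) \le -\gamma^- < 0$, and that $f_3$ behaves like $t^{r(x)-1}$ with $r(x)-1 \ge r^- - 1 > p^+ - 1$. All the limits as $t \to 0^+$ are therefore governed by these exponent ranges.

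First I would dispatch (1), (2), (3) by elementary modular estimates. For (1), using Lemma \ref{lm:norm_bounds}-type bounds one has, for $t$ small, $f_3(t) \le C t^{r^- - 1}\max\{\cdots\}$ while $f_1(t) \ge c\, t^{p^+ - 1}\min\{\cdots\}$ (being careful about the $t<1$ regime where the larger exponent dominates from below); since $r^- - 1 > p^+ - 1$, the ratio tends to $0$. For (2), $f_2(t) = \int_\Omega a(x) t^{-\gamma(x)}(u_+)^{1-\gamma(x)}\,dx \ge t^{-\gamma^-}\int_\Omega a(x)(u_+)^{1-\gamma(x)}\,dx \to \infty$ as $t \to 0^+$, using $a>0$ and $u \not\equiv 0$ (so $u_+ \not\equiv 0$ on the set where the solution is positive — this needs that the relevant $u$ has $u_+ \not\equiv 0$, which holds on $\mathcal{N}_\lambda$). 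For (3), combine: $f_1(t) - f_3(t) \le f_1(t) \le C t^{p^- - 1}$ for $t$ small while $f_2(t) \ge c\, t^{-\gamma^+}$, and $p^- - 1 + \gamma^+ > 0$ forces the quotient to $0$; I would also note $f_1 - f_3 > 0$ near $0$ so the limit is genuinely $0$ from above.

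For (4), I would write $(f_1 - f_3)'(t)$ and show it is positive for small $t$ and negative for large $t$ — the first because the $f_1'$ part has the lowest exponents and dominates near $0$, the second because the $f_3'$ part has exponent $r^- - 1 > p^+ - 1$ and dominates at infinity. Uniqueness of the critical point is the delicate part: with constant exponents one factors out a power of $t$ and reduces to monotonicity of a ratio of two increasing functions, but with variable exponents this algebraic trick fails. This is where I would invoke the degree‑theoretic approximation: restrict to a compact interval $[\delta_1,\delta_2]$, approximate $f_1$ and $f_3$ by finite step‑exponent sums $\tilde f_1, \tilde f_3$ as in Lemma \ref{lem:function_increasing_degree}, for which $\tilde f_1 - \tilde f_3$ is a genuine generalized polynomial whose critical‑point structure can be read off by factoring and applying Lemma \ref{lm:strictly_increas} (ratio of increasing functions); then transfer the conclusion "exactly one sign change of the derivative" back to $f_1 - f_3$ via Lemma \ref{lem:equal_degree}, using that the derivative does not vanish on $\partial[\delta_1,\delta_2]$ for suitable $\delta_i$. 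Call the resulting unique maximizer $t_{\max}$.

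For (5), the target is that $\dfrac{f_1 - f_3}{f_2}$ is strictly increasing on some interval $(0,\tilde t)$ with $\tilde t < t_{\max}$. Near $0$ this ratio behaves like $\dfrac{t^{p^- - 1}}{t^{-\gamma^+}} = t^{p^- - 1 + \gamma^+}$ up to bounded factors, hence tends to $0$; to get genuine monotonicity I would again pass to the step‑exponent approximants on $[\delta_1,\delta_2]$, where $\dfrac{\tilde f_1 - \tilde f_3}{\tilde f_2}$ — after multiplying numerator and denominator by a common power $t^{\gamma_{\max}}$ — becomes a ratio of the form handled by Lemma \ref{lm:strictly_increas} (numerator a sum of strictly increasing powers once $\delta_2 < t_{\max}$ keeps the $\tilde f_1$‑contribution dominant, denominator a sum of strictly increasing powers), deduce strict monotonicity there, and transfer back by the degree argument applied to the two‑variable difference map $h_2(t_1,t_2) = H(t_1) - H(t_2)$ exactly as in the proof of Lemma \ref{lm:strictly_increas}. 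Choosing $\tilde t$ to be any such $\delta_2 < t_{\max}$ (small enough that the $f_3$ terms do not yet overwhelm $f_1$) completes the claim. I expect the main obstacle to be step (4)'s uniqueness assertion together with step (5), since both require the variable exponent to be tamed by the degree/approximation scheme rather than by direct calculus; the bookkeeping of which exponent dominates in the $t<1$ versus $t>1$ regimes (Lemma \ref{lm:norm_bounds}) must be done carefully throughout.
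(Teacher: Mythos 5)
Your treatment of (1)--(3) is fine and matches the paper's terse "follow directly." For (4), your step-exponent / degree-transfer route is correct in spirit but more circuitous than the paper's argument, which simply reduces uniqueness of the critical point of $f_1-f_3$ to uniqueness of the solution of $f_3'(t_0)/f_1'(t_0)=1$; since $r^- > p^+$, the ratio $f_3'/f_1'$ is strictly increasing by the variable-exponent monotonicity lemma (Lemma \ref{lm:strictly_increas}, together with the remark following it handling the negative-exponent regime), runs from $0$ to $\infty$, and therefore crosses $1$ exactly once. No separate "read off the critical-point structure of the approximant, then transfer the count by degree" step is needed, and one would in any case have to argue further that the degree actually counts sign changes, which is not automatic.

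The real gap is in your (5). You propose multiplying numerator and denominator of $(f_1-f_3)/f_2$ by $t^{\gamma_{\max}}$ and then applying Lemma \ref{lm:strictly_increas}. But after that multiplication the numerator is $t^{\gamma_{\max}}f_1(t)-t^{\gamma_{\max}}f_3(t)$, a \emph{difference} of two integrals with positive weights, not a single expression of the form $\int_\Omega t^{\alpha(x)} g_1(x)\,dx$ with $g_1\ge 0$ as Lemma \ref{lm:strictly_increas} requires. Restricting to $\delta_2<t_{\max}$ makes this numerator increasing, but does not turn it into a positive-weight modular, so the lemma's hypotheses simply do not hold and the monotonicity conclusion cannot be invoked this way. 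The paper's proof of (5) is in fact a much shorter and different argument: one observes directly that $(f_1-f_3)/f_2$ is increasing near $0$ (where $f_1$ dominates $f_3$, $f_1-f_3>0$, and $f_2$ is strictly decreasing, so the derivative quotient $[(f_1'-f_3')f_2-(f_1-f_3)f_2']/f_2^2$ is positive), sets $\tilde t$ to be the supremum of $t$ for which the ratio is increasing on $[0,t)$, and uses (4) to bound $\tilde t$ by $t_{\max}$. No further use of the degree/approximation machinery is made in this step. Your proposal would need to be repaired to either argue the near-$0$ monotonicity directly from the sign of the numerator of the derivative of the ratio, or to abandon the attempt to fit the quotient into the mold of Lemma \ref{lm:strictly_increas}.
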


\begin{proof}[\bf Proof.]
$(1),$ $(2)$,
and $(3)$
follow directly.

$(4)$
We note that 
$
f_1'(t) - f_3'(t) > 0$
 for any sufficiently small $ t.$
Also,
\[
\lim_{t \to \infty} \left[ f_1'(t) - f_3'(t) \right]
 = -\infty.
\]
So to prove our assertion, it is enough to prove
$f_1'(t_0) - f_3'(t_0) = 0$
for some unique $t_0 > 0.$
This is the same as
\[
\frac{f_1'(t_0)}{f_3'(t_0)} = 1 \quad 
\text{at some unique point } t_0 > 0.
\]
This holds by Lemma 
\ref{lem:function_increasing_degree}.

(5) Clearly, $
\frac{f_1 - f_3}{f_2}$
is increasing in a neighbourhood of 0.

Let 
\[
\tilde{t} = \sup 
\Big \{ t > 0 \,\Big |\,
\Big  (
\frac{f_1 - f_3}{f_2} 
\Big  ): [0, t) \to \mathbb{R} 
\text{ is increasing} 
\Big \}.
\]
Clearly $\tilde{t} > 0.$
Now, if $\tilde{t} > t_{\max},$
then
\begin{equation*}
f_1 - f_3 = f_2 
\Big  ( 
\frac{f_1 - f_3}{f_2} 
\Big  )
\end{equation*}
is strictly increasing on $[0, \tilde{t}),$
which leads to a contradiction.
\end{proof}

With the previous lemma in hand, we now 
provide a complete characterization of the 
geometry of the fibering maps associated 
with the problem.

\begin{lemma}\label{lm:fiber_map_analysis}
Let $u \in \X \setminus \{0\}$
be a fixed function.
Then
\begin{itemize}
    
\item[(i)] Assume that $f_3=0.$
Then there exists a unique $t^+ > 0$
such that 
    \begin{equation*}
t^+ u \in \mathcal{N}_\lambda^+ \text{ and } 
J_\la(t^+u)=\ds\inf_{t\ge 0}J_\la(tu).
\end{equation*}
    
\item[(ii)] Assume that $f_3 > 0.$
Then there exists $t^*>0$
and unique numbers $t^+,t^->0$
with $t^+< t^-$
such that
    \begin{equation*}
t^+ u \in \mathcal{N}_\lambda^+ \quad \text{and} 
\quad t^- u \in \mathcal{N}_\lambda^-.
    \end{equation*}
    Moreover,
\begin{equation*}
J_\la(t^+u)=\ds\inf_{0\le t\le t^*}J_\la(tu) 
\quad\text{ and }\quad
J_\la(t^-u)=\ds\sup_{t\ge t^+}J_\la(tu).
			\end{equation*}
     
\end{itemize}
\end{lemma}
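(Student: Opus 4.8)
The plan is to reduce everything to the scalar function $t\mapsto J_\la(tu)=\phi_u(t)$ on $[0,\infty)$ and to read off the stratum of $\mathcal N_\la$ from its second derivative. The bookkeeping identities are $J_\la(tu)=\phi_u(t)$ and, for $t>0$, $\phi_{tu}''(1)=t^2\phi_u''(t)$; hence $tu\in\mathcal N_\la$ iff $\phi_u'(t)=0$, and such a point lies in $\mathcal N_\la^{+}$ (resp. $\mathcal N_\la^{-}$, $\mathcal N_\la^{0}$) exactly when $\phi_u''(t)>0$ (resp. $<0$, $=0$). Since the right-hand side of \eqref{eq2} sees only $u_+$, I may assume $u_+\not\equiv0$; otherwise $f_2\equiv f_3\equiv0$, $\phi_u'(t)=f_1(t)>0$ on $(0,\infty)$, and no point of $\mathcal N_\la$ lies on the ray $\{tu:t>0\}$. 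Recall from \eqref{functions} that $\phi_u'=f_1-\la f_2-f_3$ and $\phi_u''=f_1'-\la f_2'-f_3'$, with $f_1$ strictly increasing ($f_1(0^+)=0$, $f_1(\infty)=\infty$, $f_1'>0$), $f_2$ strictly decreasing ($f_2(0^+)=\infty$, $f_2(\infty)=0$, $f_2'<0$), and, when $f_3\not\equiv0$, $f_3$ strictly increasing ($f_3(0^+)=0$, $f_3(\infty)=\infty$) whose exponents $r(x)-1\in[r^--1,r^+-1]$ dominate the exponents $p(x)-1\in[p^--1,p^+-1]$ of $f_1$ at infinity since $p^+<r^-$.

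For part (i), $f_3\equiv0$, so $\phi_u'=f_1-\la f_2$ is a sum of two strictly increasing functions, hence strictly increasing, with $\phi_u'(0^+)=-\infty$ and $\phi_u'(\infty)=+\infty$; it therefore has a unique zero $t^+>0$, so $t^+u\in\mathcal N_\la$. Moreover $\phi_u''=f_1'-\la f_2'>0$ on $(0,\infty)$, so in fact $t^+u\in\mathcal N_\la^{+}$; and since $\phi_u'<0$ on $(0,t^+)$ and $\phi_u'>0$ on $(t^+,\infty)$, $\phi_u$ decreases then increases and $J_\la(t^+u)=\inf_{t\ge0}\phi_u(t)=\inf_{t\ge0}J_\la(tu)$.

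For part (ii), $f_3\not\equiv0$, set $G:=-\phi_u'=\la f_2+f_3-f_1\in C^1(0,\infty)$, so $G(0^+)=+\infty$ (from $f_2$), $G(\infty)=+\infty$ (from $f_3$), and $\phi_u''=-G'$. By Lemma \ref{lem:properties_of_fi}(4), $f_1-f_3$ has a unique maximum point $t_{\max}$ with $(f_1-f_3)(t_{\max})>0$, so $G(t_{\max})=\la f_2(t_{\max})-(f_1-f_3)(t_{\max})<0$ once $\la$ is small; arguing as in Lemma \ref{lm:norm_bounds} this smallness can be folded into the standing restriction $\la\in(0,\La)$ after shrinking $\La$ if necessary, so $\{G<0\}\neq\emptyset$. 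Letting $t^+$ be the smallest and $t^-$ the largest positive zero of $G$ (both exist by the intermediate value theorem, with $t^+<t_{\max}<t^-$), I claim these are the only zeros of $G$. This is the one place where the variable exponents must be handled: on a compact interval $[\delta_1,\delta_2]$ I approximate $f_1,f_3,f_2$ by finite sums of pure powers $g_1,g_3,g_2$ (via Lemma \ref{lem:function_increasing_degree} together with its straightforward analogues for the nonlocal part of $f_1$, for $f_3$, and for $f_2$) with exponents confined to $[p^--1,p^+-1]$, $[r^--1,r^+-1]$, $[-\gamma^+,-\gamma^-]$ respectively; the generalized polynomial $\widetilde G:=\la g_2+g_3-g_1$ then has, reading coefficients in order of increasing exponent, the sign pattern $+\cdots+\;-\cdots-\;+\cdots+$ — only two sign changes, because $p^->1$ forces all exponents of $\la g_2$ to be negative while $p^+<r^-$ separates $(0,p^+-1]$ from $[r^--1,\infty)$ — hence by Descartes' rule for Dirichlet polynomials $\widetilde G$ has at most two positive zeros, and by Lemma \ref{lem:equal_degree} so does $G$ on $[\delta_1,\delta_2]$; covering $(0,\infty)$ by finitely many such intervals gives at most two zeros of $G$ in all. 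Consequently $G$ is positive on $(0,t^+)$, negative on $(t^+,t^-)$ — it must change sign at $t^+$, since otherwise $G'(t^+)=0$ and $t^+u\in\mathcal N_\la^{0}\setminus\{0\}$, contradicting Lemma \ref{lem:empty_nehari_0} — and positive on $(t^-,\infty)$. Then $\phi_u''(t^+)=-G'(t^+)>0$ and $\phi_u''(t^-)=-G'(t^-)<0$ (strict signs again by $\mathcal N_\la^{0}=\{0\}$), so $t^+u\in\mathcal N_\la^{+}$ and $t^-u\in\mathcal N_\la^{-}$; any $tu\in\mathcal N_\la$ on the ray is a zero of $-G$, hence $t\in\{t^+,t^-\}$, which gives uniqueness. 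Finally $\phi_u'=-G$ is negative on $(0,t^+)$, positive on $(t^+,t^-)$, negative on $(t^-,\infty)$, so $\phi_u$ is decreasing–increasing–decreasing; taking $t^*:=t^-$ yields $J_\la(t^+u)=\inf_{0\le t\le t^*}J_\la(tu)$ and $J_\la(t^-u)=\sup_{t\ge t^+}J_\la(tu)$.

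The main obstacle is precisely the zero-counting for $G$ in part (ii): in the constant-exponent case $G$ is itself a Dirichlet polynomial and the two-sign-change argument is immediate, but with variable exponents $G$ is an integral over a continuum of exponents, so one must first replace $f_1,f_3,f_2$ by honest finite power sums on compact $t$-intervals and transport the conclusion back through the topological degree (Lemma \ref{lem:equal_degree}); the structural facts that make this succeed are $p^->1$, $p^+<r^-$ and $\gamma^+<1$, which force the three exponent blocks of $G$ to be separated and ordered as (negative, positive-small, positive-large), producing the sign pattern $+\,-\,+$. The remaining subtlety — the lower bound $\psi(t_{\max})=(f_1-f_3)(t_{\max})/f_2(t_{\max})>\la$ uniform over $u$, equivalently $G(t_{\max})<0$, which guarantees that the two zeros actually exist — is obtained by the same Sobolev-embedding estimates as in Lemma \ref{lm:norm_bounds}, at the cost of possibly shrinking $\La$.
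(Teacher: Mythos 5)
Your part (i) is correct and is essentially the paper's argument (the paper finds $t^+$ as the unique solution of $(f_1/f_2)(t)=\la$ using Lemma \ref{lm:strictly_increas}; you observe directly that $\phi_u'=f_1-\la f_2$ is strictly increasing with $\phi_u''=f_1'-\la f_2'>0$; these are interchangeable). Your reduction to the scalar map $\phi_u$, the scaling identity $\phi_{tu}''(1)=t^2\phi_u''(t)$, and the preliminary remark that $u_+\not\equiv 0$ may be assumed are all fine.

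In part (ii) you take a genuinely different route to the \emph{at-most-two-zeros} claim. The paper works with $t_{\max}$ and $\tilde t$ from Lemma~\ref{lem:properties_of_fi} and introduces two further (and $u$-dependent) smallness restrictions $\la^*,\la_2$ so that $f_1-f_3-\la f_2$ is monotone on $(0,\tilde t)$ and again on $(t_{\max},\infty)$, whence at most one zero on each side of $t_{\max}$. Your Descartes-rule strategy, if it worked, would give the bound on the number of zeros with no smallness of $\la$ beyond what is needed to make $G(t_{\max})<0$, which is a cleaner route. But the step that transfers the bound from the Dirichlet polynomial $\widetilde G$ back to $G$ is a genuine gap: Lemma~\ref{lem:equal_degree} yields only $\deg(G,(\delta_1,\delta_2),0)=\deg(\widetilde G,(\delta_1,\delta_2),0)$, and in one dimension that degree equals $\tfrac{1}{2}\big(\operatorname{sgn}G(\delta_2)-\operatorname{sgn}G(\delta_1)\big)\in\{-1,0,1\}$. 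When $G(\delta_1)$ and $G(\delta_2)$ have the same sign the degree is $0$ whether $G$ has $0,2,4,\dots$ zeros, so nothing is learned about the count. Likewise, ``covering $(0,\infty)$ by finitely many intervals'' does not yield a global bound of $2$; it yields at most $2$ per interval.

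The argument can be closed in either of two ways, neither of which you invoked. One: exploit Lemma~\ref{lem:empty_nehari_0}. Since $\mathcal N_\la^{0}=\{0\}$, every zero $t_0>0$ of $G=-\phi_u'$ satisfies $G'(t_0)=-\phi_u''(t_0)\neq 0$, hence is a sign change and persists under any sufficiently small $L^\infty$ perturbation; and since $G(0^+)=G(\infty)=+\infty$ (via $f_2$ and $f_3$ respectively), every zero of $G$ lies in one fixed compact interval $[\delta_1,\delta_2]$. A single approximation on that one interval then shows that three or more zeros of $G$ would force three or more zeros of $\widetilde G$, contradicting Descartes. Two: avoid the approximation entirely by citing the variation-diminishing property of the kernel $t^s$ (Laguerre/P\'olya): $G(t)=\int t^s\,d\nu(s)$ for a signed measure $\nu$ supported in $[-\gamma^+,-\gamma^-]\cup[p^--1,p^+-1]\cup[r^--1,r^+-1]$ with sign pattern $(+,-,+)$, so $G$ has at most two positive zeros. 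Once the zero count is fixed, the rest of your part (ii) — the sign chart of $G$, the classification $\phi_u''(t^+)>0>\phi_u''(t^-)$, and the choice $t^*=t^-$ giving the stated $\inf$ and $\sup$ (the paper instead takes $t^*$ to be the zero of $\phi_u$ in $(t^+,t^-)$, using Lemmas~\ref{lm:Nehari_positive} and~\ref{lm:Nehari_negative}; your choice also works since $\phi_u$ decreases on $(0,t^+)$ from $\phi_u(0)=0$) — is correct.
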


\begin{proof}[\bf Proof.]
\textbf{$(i)$} We begin by noting the following limits
\begin{equation*}
\lim_{t \to 0} 
\frac{f_1(t)}{f_2(t)} = 0 \quad\text{and} 
\quad\lim_{t \to \infty} 
\frac{f_1(t)}{f_2(t)} = \infty.
\end{equation*}
Moreover, from Lemma~\ref{lm:strictly_increas}, 
we know that $\frac{f_1}{f_2}$
is strictly increasing.

 Thus, there exists a unique point $t^+$
such that
\begin{equation*}
\Big  ( \frac{f_1 }{f_2} 
\Big  )(t^+) = \lambda. 
\end{equation*}
From \eqref{fibermap_derivative_f_i}, 
we have $\phi'_u(t^+)=0,\,\phi'_u(t^+)<0$
for all $t<t^+$
and $\phi'_u(t^+)>0$
for all $t>t^+.$
Thus, $\phi_u(t)$
attains local minimum at $t=t^+$
and $\phi''_u(t^+)>0.$
Therefore,
\begin{equation*}
t^+ u \in \mathcal{N}_\lambda^+ 
\text{ and } J_\la(t^+u)=\ds\inf_{t\ge 0}J_\la(tu).
\end{equation*}

\textbf{$(ii)$} Since $  \lim_{t \to 0^+} 
\frac{f_3(t)}{f_1(t)} = 0,$
we observe that $(f_1 - f_3)(t) > 0$
for sufficiently small $t$.

 Now, using part $(2)$
of Lemma~\ref{lem:properties_of_fi}, 
i.e.,  $\lim_{t \to 0} f_2(t) = \infty$
and from part $(4)$
of Lemma~\ref{lem:properties_of_fi}, 
we conclude that there exists 
$t_1 \in (0, \tilde{t})$
 such that
\begin{equation*}
\lambda f_2(t_1) > (f_1 - f_3)(t_1). 
\end{equation*}
 Now, for any fixed sufficiently small $\lambda > 0,$
we have
\begin{equation*}
\lambda f_2(t_2) < f_1(t_2) - f_3(t_2) 
\quad \text{for some } t_2 \in (0, \tilde{t}). 
\end{equation*}
 Then, by the Mean Value Theorem, 
 there exists $t^+ \in (0, \tilde{t})$
such that
\begin{equation*}
\lambda f_2(t^+) = f_1(t^+) - f_3(t^+). 
\end{equation*} 
Since $f_1 - f_3$
is increasing on $(0, \tilde{t})$
and $f_2$
is strictly decreasing on $(0, \tilde{t}),$
it follows that
\begin{equation*}
f_1 - f_3 - \lambda f_2 
\text{ is increasing on } (0, \tilde{t}). 
\end{equation*}
This implies that $t^+$
is unique.

Now, using part $(5)$
of Lemma~\ref{lem:properties_of_fi}, 
we know that $\frac{f_1 - f_3}{f_2}$
is strictly increasing on $(t^+, \tilde{t}).$
It follows that
\begin{equation*}
\lambda = 
\frac{(f_1 - f_3)(t^+)}{f_2(t^+)} < 
\frac{(f_1 - f_3)(t)}{f_2(t)} \quad 
\text{ for all } t \in (t^+, \tilde{t}). 
\end{equation*}
 Consequently, we obtain
\begin{equation*}
\lambda f_2(t) < (f_1 - f_3)(t) \quad 
\text{ for all } t \in (t^+, \tilde{t}). 
\end{equation*}
 Fix $\lambda^*$
sufficiently small such that for $\la\in(0,\la^*),$
we get
\begin{equation*}
\lambda f_2(t) < (f_1 - f_3)(t) \quad 
\text{ for all } t \in (t^+, t_{\max}). 
\end{equation*}
 Also, observe that
\begin{equation*}
f_1 - f_3 - \lambda f_2 \to -\infty \quad
 \text{as } t \to \infty. 
\end{equation*}
 Therefore, there exists $t_3 > t_{\max}$
such that
\begin{equation*}
(f_1 - f_3)(t_3) < \lambda f_2(t_3).
\end{equation*}
 Thus, for all $\lambda \in (0, \lambda^*),$
there exists $t^- > t_{\max}$
such that
\begin{equation*}
\lambda f_2(t^-) = (f_1 - f_3)(t^-).
\end{equation*}
From part $(5)$
of Lemma \ref{lm:fiber_map_analysis}, we have $f_1-f_3$
is strictly decreasing on $(t_{\max}, \infty).$
By fixing $\lambda_2$
sufficiently small, we get $f_1 - f_3 - \lambda f_2$
is strictly decreasing on $(t_{\max}, \infty).$
Therefore, $t^-$
is unique.
Hence, $t^+ < t^-$
are two distinct critical points of $\phi_u(t)$.

Since $\phi_u(0)=0$
and $\phi_u(t)<0$
for $t$
small enough.
We can choose $\la$
sufficiently small specifically, 
$\la=\min\{\la^*,\la_2\}$
such that
$$
\phi_u'(t) < 0 \quad \text{for all } 
t \in (0, t^+),   \text{ and} 
$$
$$
 \phi_u'(t) > 0 \quad \text{for all }
 t \in (t^+, t_{\max})  \ \ \
 $$
  and  
 $ 
\phi_u'(t^+) = 0.
$
Therefore, $\phi_u$
attains a minimum at $t^+$
and $\phi_u''(t^+) > 0.$
Thus,
$
t^+ u \in \mathcal{N}_\lambda^+.
$

Similarly, since $\phi_u'(t) > 0$
for all $t \in (t_{\max}, t^-)$
and $\phi_u'(t) < 0$
for all $t \in (t^-, \infty)$
and $\phi_u'(t^-) = 0.$
Therefore, $\phi_u$
attains a maximum at $t^-$
and $\phi_u''(t^-) < 0.$
Thus,
$
t^- u \in \mathcal{N}_\lambda^-.
$

From Lemma \ref{lm:Nehari_positive} and 
\ref{lm:Nehari_negative}, 
we already know that $\phi_u(t^+)<0$
and $\phi_u(t^-)>0.$
Since, $\phi_u(t)$
is strictly increasing on $[t^+,t^-]$
and strictly decreasing for $t>t^-,$
also $\phi_u(t)\to-\infty$
as $t\to\infty.$
Hence, we can conclude that there exists 
a unique $t^*\in (t^+,t^-)$
such that $\phi_u(t^*)=0$
and 
\begin{equation*}
J_\la(t^+u)=\ds\inf_{0\le t\le t^*}J_\la(tu)
 \quad\text{ and }\quad 
J_\la(t^-u)=\ds\sup_{t\ge t^+}J_\la(tu).
\qedhere
\end{equation*}
\end{proof}

\begin{lemma}\label{lm9}
    Let $u\in \mathcal{N}_\la^\pm,$
then there exists $\varepsilon>0$
and a continuous function 
$f:B_\varepsilon(0)\ra \R^+$
such that
$$
    f(u)>0, f(0)=1 \text{ and } 
    f(v)(u+v)\in \mathcal{N}_\la^\pm 
    \text{ for all } v\in B_\varepsilon(0),
$$
    where $B_\varepsilon(0):=
    \{v\in \X:\|v\|_{\X}<\varepsilon\}.$
\end{lemma}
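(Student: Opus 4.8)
The plan is to apply the Implicit Function Theorem to the map that detects membership in $\mathcal{N}_\lambda$, combined with an openness argument to preserve the sign of $\phi''$. Define $F : \mathbb{R}^+ \times B_\varepsilon(0) \to \mathbb{R}$ by $F(t,v) = \phi'_{u+v}(t)$; explicitly, using \eqref{eq:fibering_map_first_derivative},
\begin{equation*}
F(t,v) = \int_\om t^{p(x)-1}|\grad(u+v)|^{p(x)}\,dx + \iint_{\R^{2N}} t^{p(x,y)-1}\frac{|(u+v)(x)-(u+v)(y)|^{p(x,y)}}{|x-y|^{N+sp(x,y)}}\,dx\,dy - \la\int_\om t^{-\ga(x)}a(x)\big((u+v)_+\big)^{1-\ga(x)}\,dx - \int_\om t^{r(x)-1}b(x)\big((u+v)_+\big)^{r(x)}\,dx.
\end{equation*}
First I would check that $F$ is continuous on $\mathbb{R}^+\times B_\varepsilon(0)$ (for $\varepsilon$ small, using the embeddings of Lemma \ref{lm:space_embedd} and the integrability hypotheses $(A_1)$–$(B_1)$, together with the fact that the singular term $t^{-\ga(x)}$ is bounded away from $t=0$ once we localize $t$ near $1$) and that $t\mapsto F(t,v)$ is $C^1$ with $\partial_t F(t,v) = \phi''_{u+v}(t)$. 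Since $u\in\mathcal{N}_\lambda^\pm$ we have $F(1,0)=\phi'_u(1)=0$ and $\partial_t F(1,0)=\phi''_u(1)\neq 0$.

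By the Implicit Function Theorem there exist $\varepsilon>0$ and a $C^1$ (in particular continuous) function $v\mapsto t(v)$ on $B_\varepsilon(0)$ with $t(0)=1$, $t(v)>0$, and $F(t(v),v)=0$, i.e. $\phi'_{u+v}(t(v))=0$. Recalling that $tw\in\mathcal{N}_\lambda$ iff $\phi'_w(t)=0$, this says precisely $t(v)(u+v)\in\mathcal{N}_\lambda$. Setting $f(v):=t(v)$ gives $f(0)=1$, $f(v)>0$, and $f(v)(u+v)\in\mathcal{N}_\lambda$ for all $v\in B_\varepsilon(0)$. It remains to upgrade membership from $\mathcal{N}_\lambda$ to $\mathcal{N}_\lambda^\pm$. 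For this I would note that the map $v\mapsto \phi''_{u+v}(t(v))$ is continuous on $B_\varepsilon(0)$ (again by the continuity/integrability considerations above and continuity of $t(\cdot)$), and at $v=0$ it equals $\phi''_u(1)$, which is strictly positive if $u\in\mathcal{N}_\lambda^+$ and strictly negative if $u\in\mathcal{N}_\lambda^-$. Hence, after shrinking $\varepsilon$ if necessary, $\phi''_{u+v}(t(v))$ retains the same strict sign for all $v\in B_\varepsilon(0)$, which gives $f(v)(u+v)\in\mathcal{N}_\lambda^\pm$ as desired.

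The main obstacle I anticipate is the differentiability and continuity of the singular term $\int_\om t^{-\ga(x)}a(x)((u+v)_+)^{1-\ga(x)}\,dx$ in both variables. In $t$ this is harmless since we only ever evaluate near $t=1$, where $t^{-\ga(x)}$ and its $t$-derivative $-\ga(x)t^{-\ga(x)-1}$ are uniformly bounded; so $\partial_t F$ exists and is jointly continuous there. The subtler point is continuity in $v$: one must control $\big\||(u+v)_+|^{1-\ga(x)} - |u_+|^{1-\ga(x)}\big\|$ in the appropriate variable Lebesgue norm, which follows from the elementary inequality $\big||a|^{\theta}-|b|^\theta\big|\le |a-b|^\theta$ for $0<\theta\le 1$ (applied with $\theta = 1-\ga(x)$), Lemma \ref{lemA1}, H\"older's inequality with the weight $a\in L^{t'(x)}$, and the continuous embedding $\X\hookrightarrow L^{p^*(x)}(\om)$ — exactly as in the proof of Lemma \ref{lm:norm_bounds}(ii). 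Once this continuity is in hand the IFT and the sign-openness argument run without further difficulty. (If one prefers to avoid invoking the IFT directly because of the mild regularity of $F$, the same conclusion can be reached by a direct monotonicity argument: for $u\in\mathcal{N}_\lambda^+$ the fibering map $\phi_{u+v}$ has, for $v$ small, a unique local minimum near $t=1$ by the analysis in Lemma \ref{lm:fiber_map_analysis}, and $f(v)$ is the location of that minimum, depending continuously on $v$; the case $u\in\mathcal{N}_\lambda^-$ is analogous with a local maximum.)
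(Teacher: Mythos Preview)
Your proposal is correct and follows essentially the same approach as the paper: define $F(t,v)=\phi'_{u+v}(t)$, observe $F(1,0)=0$ and $\partial_tF(1,0)=\phi''_u(1)\neq0$, apply the Implicit Function Theorem to obtain the continuous function $f(v)=t(v)$ with $f(0)=1$ and $f(v)(u+v)\in\mathcal{N}_\lambda$, then shrink $\varepsilon$ so that the sign of $\phi''_{u+v}(t(v))$ is preserved. Your discussion of the continuity of the singular term in $v$ and the caveat about the regularity needed for the IFT are more careful than the paper's own proof, which invokes the IFT without further justification.
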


\begin{proof}[\bf Proof.]
    We consider the case $u\in \mathcal{N}_\la^+;$
the case $u\in \mathcal{N}_\la^-$
follows by a similar argument.
Let $u\in \mathcal{N}_\la^+,$
we define the function $F:\X\times\R^+\ra\R$
by
    \begin{align*}
        F(v,t) & =
\int_\om  t^{p(x)-1}|\grad (u+v)|^{p(x)}\\
&
+\iint_{\mathbb{R}^{2N}} t^{p(x,y)-1}
\frac{|(u+v)(x)-(u+v)(y)|^{p(x,y)}}{|x-y|^{N+sp(x,y)}}\\
        &-\la
\int_\om t^{-\ga(x)}a(x)(u+v)_+^{1-\ga(x)}
        -
\int_\om t^{r(x)-1}b(x)(u+v)_+^{r(x)}.
    \end{align*}
    Since $u\in \mathcal{N}_\la^+\subset\Nh,$
we get
    \begin{align*}
        F(0,1) & =
\int_\om|\grad u|^{p(x)}+\iint_{\mathbb{R}^{2N}}
\frac{|u(x)-u(y)|^{p(x,y)}}{|x-y|^{N+sp(x,y)}} \\
&
-\la
\int_\om a(x)u_+^{1-\ga(x)}
        -
\int_\om b(x)u_+^{r(x)}=0
    \end{align*}
    and 
    \begin{align*}
        \frac{\pa}{\pa t}F(0,1)=
\int_\om &(p(x)-1)|\grad u|^{p(x)}+
\iint_{\mathbb{R}^{2N}} (p(x,y)-1))
\frac{|u(x)-u(y)|^{p(x,y)}}{|x-y|^{N+sp(x,y)}}\\
        &+\la
\int_\om\ga(x) a(x)u_+^{1-\ga(x)}
        -
\int_\om (r(x)-1)b(x)u_+^{r(x)}>0.
    \end{align*}
With the application of implicit function 
theorem at $(0,1),$
there exists $\varepsilon>0$
and a continuous function 
$f:B_\varepsilon(0)\ra \R^+$
such that $t=f(v)$
is a unique solution of $F(v,t)=0.$
From $F(0,1)=0,$
we get $f(0)=1$
and from $F(v,f(v))=0,$
we have
  \begin{align*}
        0 & =
\int_\om 
 f(v)^{p(x)-1}|\grad (u+v)|^{p(x)} \\
 &
+\iint_{\mathbb{R}^{2N}} f(v)^{p(x,y)-1}
\frac{|(u+v)(x)-(u+v)(y)|^{p(x,y)}}{|x-y|^{N+sp(x,y)}}\\
&-\la
\int_\om f(v)^{-\ga(x)}a(x)(u+v)_+^{1-\ga(x)}
        -
\int_\om f(v)^{r(x)-1}b(x)(u+v)_+^{r(x)}\\
& =
\frac{P(f(v)(u+v))-\la Q(f(v)(u+v))-R(f(v)(u+v))}{f(v)}
    \end{align*}
    which implies that 
    $$f(v)(u+v)\in \Nh 
    \text{ for all }v\in \X, \|v\|_{\X}<\ve.$$
Similarly, using 
$\frac{\pa}{\pa t}F(0,1)>0$
and taking $\ve$
even smaller if necessary, we get
$$
f(v)(u+v)\in \Nh^+ \text{ for all }v\in \X, \|v\|_{\X}<\ve.
$$
    This completes the proof of the Lemma \ref{lm9}.
\end{proof}

\section{Existence of solutions in 
\texorpdfstring{$\Nh^{\pm}$}{Nehari Manifold}}\label{sec3}

\begin{lemma}\label{lm:min_pos}
    For $0<\la<\La,$
then $J_\la$
has a minimizer $u_0\in \Nh^+$
and satisfies the following
$$
    J_\la(u_0)=
 \inf_{u\in \mathcal{N}_\la^+}J_\la(u)=\Th_\la^+<0.
$$
\end{lemma}

\begin{proof}[\bf Proof.]
    From Lemma \ref{lm:bdd_blw}, we know that $J_\la$
is bounded below on $\Nh$
and hence on $\Nh^+$
as well.
Therefore, there exists a minimizing sequence $\{u_n\}\in\Nh^+,$
such that
    $$\lim_{n\to\infty} J_\la(u_n)=\inf_{u\in\Nh^+}J_\la(u).$$
    Since $J_\la$
is coercive on $\X,$
it follows that the sequence $\{u_n\}$
is bounded on $\X.$
Thus, there exists a subsequence of $\{u_n\}$
(still denoted by $\{u_n\}$), such that $u_n\weak u_0$
weakly in $\X.$
Using Lemma \ref{lm:space_embedd}, we get
\begin{equation*}
   \left\{
	\begin{aligned}
		&u_n\ra u_0 \text{ strongly in }L^{q(x)}(\om),\\
        &u_n\ra u_0 \text{ a.e. in } \om.
	\end{aligned}
    \right.
\end{equation*}
Now, by applying Lemma \ref{lem:modular_cgs_lp} 
together with the Lebesgue dominated convergence
 theorem, we have
\begin{equation}\label{eq7}
\left.
	\begin{aligned}
    \lim_{n\to\infty}
\int_\om 
\frac{a(x)}{1-\ga(x)}(u_n)_+^{1-\ga(x)}&=
\int_\om 
\frac{a(x)}{1-\ga(x)}(u_0)_+^{1-\ga(x)};\\
    \lim_{n\to\infty}
\int_\om a(x)(u_n)_+^{1-\ga(x)}&=
\int_\om a(x)(u_0)_+^{1-\ga(x)}
\end{aligned}
  \right.
  \end{equation}
and 
\begin{equation}\label{eq22}
\begin{aligned}
    \lim_{n\to\infty}
\int_\om 
\frac{b(x)}{r(x)}(u_n)_+^{r(x)}&=
\int_\om 
\frac{b(x)}{r(x)}(u_0)_+^{r(x)};\\
    \lim_{n\to\infty}
\int_\om b(x)(u_n)_+^{r(x)}&=
\int_\om b(x)(u_0)_+^{r(x)}.
\end{aligned}
  \end{equation}
  We claim that $u_0\neq0.$
First, we prove that for $u_0\in \Nh^+$
it holds that $Q(u_0)>0.$
Since $u_0\in \Nh^+,$
we have
   \begin{align*}
0<\varphi''_{u_0}(1)&\le p^+P(u_0)-\la(1-\ga^+)Q(u_0)-r^-R(u_0)\\
        &\le -(r^--p^+)P(u_0)+\la(r^--(1-\ga^+))Q(u_0)
    \end{align*}
    which implies that
    $$Q(u_0)> 
\frac{(r^--p^+)}{\la(r^--(1-\ga^+))}P(u_0)\geq 0.$$
    Thus, $Q(u_0)>0$
and hence $u_0\ne0.$
If, on the contrary, we assume that $u_0=0.$
Then, from \eqref{eq7}, we get
    \begin{equation*}
    \lim_{n\to\infty}
\int_\om a(x)(u_n)_+^{1-\ga(x)}=
\int_\om a(x)(u_0)_+^{1-\ga(x)}=0,
  \end{equation*}
  that is $Q(u_0)=0,$
which is a contradiction.
Hence, we conclude that $u_0\in\X\setminus\{0\}.$

    It only remains to prove that $u_n\ra u_0$
strongly in $\X.$
Suppose not.
Then $u_n\nrightarrow u_0$
in $\X$
as $n\to\infty.$
Therefore, by applying Lemma 
\ref{lem:modular_cgs_prob} and
\cite[Lemma 2.4.17]{Variable_book}, 
up to a subsequence it follows that
        \begin{align}\label{eq8}
        &
\int_\om 
\frac{1}{p(x)}|\grad u_0|^{p(x)} +\iint_{\mathbb{R}^{2N}} 
\frac{1}{p(x,y)}
\frac{|u_0(x)-u_0(y)|^{p(x,y)}}{|x-y|^{N+sp(x,y)}} \nonumber\\
        &<\liminf_{n\to\infty}
\int_\om 
\frac{1}{p(x)}|\grad u_n|^{p(x)}+\iint_{\mathbb{R}^{2N}} 
\frac{1}{p(x,y)}
\frac{|u_n(x)-u_n(y)|^{p(x,y)}}{|x-y|^{N+sp(x,y)}}.
    \end{align}
    Using \eqref{eq:eneryg_functional}, 
    \eqref{eq7}, \eqref{eq22}, and \eqref{eq8}, we get
\begin{align}\label{eq9}
\lim_{n\to \infty} J_\la(u_n)
& =
\liminf_{n\to \infty}
\Big [
\int_\om 
\frac{1}{p(x)} |\grad u_n|^{p(x)}
\! + \! 
\iint_{\mathbb{R}^{2N}} 
\frac{1}{p(x,y)}
\frac{|u_n(x)-u_n(y)|^{p(x,y)}}
{|x-y|^{N+sp(x,y)}} \nonumber\\
        &  -\la
\int_\om 
\frac{a(x)}{1-\ga(x)}(u_n)_+^{1-\ga(x)}
        -
\int_\om 
\frac{b(x)}{r(x)}(u_n)_+^{r(x)}\Big ]\nonumber\\
        \geq \liminf_{n\to \infty}
        & \Big [
\int_\om 
\frac{1}{p(x)}|\grad u_n|^{p(x)}
+\iint_{\mathbb{R}^{2N}} 
\frac{1}{p(x,y)}
\frac{|u_n(x)-u_n(y)|^{p(x,y)}}
{|x-y|^{N+sp(x,y)}}\Big ]\nonumber\\
               & -\lim_{n\to\infty}\Big [\la
\int_\om 
\frac{a(x)}{1-\ga(x)}(u_n)_+^{1-\ga(x)}\Big ]
-\lim_{n\to\infty}\Big [
\int_\om 
\frac{b(x)}{r(x)}(u_n)_+^{r(x)}\Big ]\nonumber\\
        >
\int_\om 
\frac{1}{p(x)}&|\grad u_0|^{p(x)}
+\iint_{\mathbb{R}^{2N}}
\frac{1}{p(x,y)}
\frac{|u_0(x)-u_0(y)|^{p(x,y)}}
{|x-y|^{N+sp(x,y)}}\nonumber\\
                &-\la
\int_\om 
\frac{a(x)}{1-\ga(x)}(u_0)_+^{1-\ga(x)}    
\int_\om 
\frac{b(x)}{r(x)}(u_0)_+^{r(x)}\nonumber\\
        =J_\la(u_0).
    \end{align}
By applying Lemma 
\ref{lm:fiber_map_analysis} for 
$u_0\in \X\setminus\{0\},$
we find that in both the cases whether 
$R(u_0)=0$
or $R(u_0)>0$
there exists $0<t_0^+\in \real^+$
such that $t_0^+u_0\in\Nh^+.$
Since we have assumed that 
$u_n\nrightarrow u_0$
in $\X$
as $n\to\infty,$
it follows that
\begin{equation}\label{eq10}
    \rh(t_0^+u_0)< \liminf_{n\to\infty} 
    \rh(t_0^+u_n).
\end{equation}
Also, using Lemma \ref{lem:modular_cgs_lp} 
and Lebesgue dominated convergence theorem, 
we get
\begin{equation}\label{eq11}
    \lim_{n\to\infty}
\int_\om a(x)(t_0^+u_n)_+^{1-\ga(x)}=
\int_\om a(x)(t_0^+u_0)_+^{1-\ga(x)}
  \end{equation}
and 
\begin{equation}\label{eq12}
    \lim_{n\to\infty}
\int_\om b(x)(t_0^+u_n)_+^{r(x)}=
\int_\om b(x)(t_0^+u_0)_+^{r(x)}.
  \end{equation}
From equations 
\eqref{eq:fibering_map_first_derivative}, 
\eqref{eq10}, \eqref{eq11}, and 
\eqref{eq12}, 
we conclude that
    \begin{align} \label{eq13}
\lmt & \varphi'_{u_n}(t_0^+)
 =\liminf_{t\to\infty}\Big [
\int_\om t_0^{p(x)-1}|\grad 
(u_n)|^{p(x)} \notag \\
        &\quad
+\iint_{\mathbb{R}^{2N}}
 t_0^{p(x,y)-1}
\frac{|(u_n)(x)-(u_n)(y)|^{p(x,y)}}
{|x-y|^{N+sp(x,y)}}\Big .\nonumber\\
        &\quad -\la
\int_\om t_0^{-\ga(x)}a(x)(u_n)_+^{1-\ga(x)}
        -
\int_\om t_0^{r(x)-1}b(x)(u_n)_+^{r(x)}
\Big ]\nonumber\\
       &\ge
\frac{1}{t_0^+}\Big [\liminf_{n\to\infty}
\rh(t_0^+u_n)-\la\lmt
\int_\om t_0^{1-\ga(x)}a(x)(u_n)_+^{1-\ga(x)}
 \nonumber\\
        &\quad -\lmt
\int_\om t_0^{r(x)}b(x)(u_n)_+^{r(x)}
\Big ]\nonumber\\
        &>
\frac{1}{t_0^+}\Big [\rh(t_0^+u_0)-\la
\int_\om t_0^{1-\ga(x)}a(x)(u_0)_+^{1-\ga(x)}-
\int_\om t_0^{r(x)}b(x)(u_0)_+^{r(x)}
\Big ]\nonumber\\
        &=\varphi'_{u_0}(t_0^+)=0.
    \end{align}
Hence, for sufficiently large $n,$
we have $\ds \lmt \varphi'_{u_n}(t_0^+)>0.$
Since $u_n\in \Nh^+$
for all $n\in\mathbb{N},$
it holds that $\varphi_{u_n}'(1)=0$
and $\varphi_{u_n}''(1)>0.$
By Lemma \ref{lm:fiber_map_analysis}, 
this implies that $\varphi_{u_n}'(t)<0$
for all $t\in (0,1).$
Therefore, from \eqref{eq13}, 
we conclude that $t_0^+>1.$
Since $t_0^+u_0\in\Nh^+$
and by Lemma \ref{lm:fiber_map_analysis}, 
$\varphi_{u_0}(t)$
is decreasing on $(0,t_0^+).$
Thus, from \eqref{eq9}, we obtain
$$
J_\la(t_0^+u_0)\le J_\la(u_0)<\lmt 
J_\la(u_n)=\inf_{u\in\Nh^+}J_\la(u),
$$
which contradicts the fact that 
$t_0^+u_0\in\Nh^+.$
Hence, $u_n\rightarrow u_0$
strongly in $\X$
as $n\to\infty$
and thus $u_0\in\Nh.$
But by Lemma \ref{lem:empty_nehari_0}, 
$\Nh^0=\{0\}$
and since by Lemma 
\ref{lm:Nehari_positive} 
$J_\la(u_0)=\ds \lmt J_\la(u_n)<0,$
we conclude that $u_0\in\Nh^+.$
\end{proof}

\begin{lemma}\label{lm:min_neg}
    For $0<\la<\La,$
then $J_\la$
has a minimizer $z_0\in \Nh^-$
and satisfies the following
$$
    J_\la(z_0)=
    \inf_{u\in \mathcal{N}_\la^-}J_\la(u)=\Th_\la^->0.
$$
\end{lemma}

\begin{proof}[\bf Proof.]
      From Lemma \ref{lm:bdd_blw}, 
      it follows that $J_\la$
is bounded below on $\Nh^-.$
Thus, there exists a minimizing sequence 
$\{z_n\}\in\Nh^-,$
such that
$$
    \lim_{n\to\infty} J_\la(z_n)=\inf_{z\in\Nh^-}J_\la(z).
$$
    Since $J_\la$
is coercive, the sequence $\{z_n\}$
is bounded on $\X.$
Hence, up to a subsequence 
(still denoted by $\{z_n\}$), 
we have $z_n\weak z_0$
weakly in $\X.$
Applying the Lemma \ref{lm:space_embedd}, 
we obtain
    \begin{equation*}
   \left\{
	\begin{aligned}
&z_n\ra z_0 \text{ strongly in }L^{q(x)}(\om),\\
&z_n\ra z_0 \text{ a.e. in } \om.
	\end{aligned}
    \right.
\end{equation*}
Now, by applying Lemma \ref{lem:modular_cgs_lp} 
together with the Lebesgue dominated 
convergence theorem, we have
\begin{equation}\label{eq29}
	\begin{aligned}
    \lim_{n\to\infty}
\int_\om 
\frac{a(x)}{1-\ga(x)}(z_n)_+^{1-\ga(x)}&=
\int_\om 
\frac{a(x)}{1-\ga(x)}(z_0)_+^{1-\ga(x)}
\end{aligned}
  \end{equation}
and 
\begin{equation}\label{eq21}
\begin{aligned}
    \lim_{n\to\infty}
\int_\om 
\frac{b(x)}{r(x)}(z_n)_+^{r(x)}&=
\int_\om 
\frac{b(x)}{r(x)}(z_0)_+^{r(x)};\\
    \lim_{n\to\infty}
\int_\om b(x)(z_n)_+^{r(x)}&=
\int_\om b(x)(z_0)_+^{r(x)}.
\end{aligned}
  \end{equation}
  First, we prove that for $z_0\in \Nh^-$
it holds that $R(u_0)>0.$
Since $z_0\in \Nh^-,$
we have
   \begin{align*}
0>\varphi''_{z_0}(1)&\ge p^-P(z_0)
-\la(1-\ga^-)Q(z_0)-r^+R(z_0)\\
&\ge (p^--(1-\ga^-))P(z_0)
-(r^+-(1-\ga^-))R(z_0),
    \end{align*}
    that is,
$$
    R(z_0)\ge 
\frac{(p^--(1-\ga^-))}{(r^+-(1-\ga^-))}P(z_0)>0.
$$
   Next, we claim that $z_0\neq0.$
On the contrary, let us assume that $z_0=0.$
Then from \eqref{eq21}, we get
    \begin{equation*}
\lmt R(z_n)=\lim_{n\to\infty}
\int_\om b(x)(z_n)_+^{r(x)}=
\int_\om b(x)(z_0)_+^{r(x)}=0.
  \end{equation*}
Since $z_n\in \Nh^-,$
hence from Lemma \ref{lm:Nehari_negative}, 
we get
\begin{align*}
        0<K\le J_\la(z_n)\le \Big (
\frac{1}{p^-}-
\frac{1}{1-\ga^-} \Big ) P(z_n)+\Big (
\frac{1}{1-\ga^-}-
\frac{1}{r^+} \Big ) R(z_n)\le0,
    \end{align*}
    which is a contradiction.
Hence, we conclude that 
$z_0\in\X\setminus\{0\}.$

     It only remains to prove that 
     $z_n\ra z_0$
strongly in $\X.$
Suppose not.
Then $z_n\nrightarrow z_0$
in $\X$
as $n\to\infty.$
Therefore, by applying 
Lemma \ref{lem:modular_cgs_prob} and
\cite[Lemma 2.4.17]{Variable_book}, 
up to a subsequence it follows that
        \begin{align}\label{eq23}
        &
\int_\om 
\frac{1}{p(x)}|\grad z_0|^{p(x)}
 +\iint_{\mathbb{R}^{2N}} 
\frac{1}{p(x,y)}
\frac{|z_0(x)-z_0(y)|^{p(x,y)}}
{|x-y|^{N+sp(x,y)}} \\ \nonumber 
        & <\liminf_{n\to\infty}
\int_\om 
\frac{1}{p(x)}|\grad z_n|^{p(x)}
+\iint_{\mathbb{R}^{2N}} 
\frac{1}{p(x,y)}
\frac{|z_n(x)-z_n(y)|^{p(x,y)}}
{|x-y|^{N+sp(x,y)}}.
    \end{align}
    Using \eqref{eq:eneryg_functional},
\eqref{eq29}, \eqref{eq21}, and \eqref{eq23},
 we get
    \begin{align}\label{eq24}
\lim_{n\to \infty} 
& J_\la(z_n)
=\liminf_{n\to \infty}
\Big [
\int_\om 
\frac{1}{p(x)}|\grad z_n|^{p(x)}
+\iint_{\mathbb{R}^{2N}} 
\frac{1}{p(x,y)}
\frac{|z_n(x)-z_n(y)|^{p(x,y)}}
{|x-y|^{N+sp(x,y)}}  \nonumber\\
        & -\la
\int_\om 
\frac{a(x)}{1-\ga(x)}(z_n)_+^{1-\ga(x)}
        -
\int_\om 
\frac{b(x)}{r(x)}(z_n)_+^{r(x)}\Big]
\nonumber\\
    &    \geq \liminf_{n\to \infty}
 \Big [
\int_\om 
\frac{1}{p(x)}|\grad z_n|^{p(x)}
+\iint_{\mathbb{R}^{2N}} 
\frac{1}{p(x,y)}
\frac{|z_n(x)-z_n(y)|^{p(x,y)}}
{|x-y|^{N+sp(x,y)}}\Big ]\nonumber\\
 & -\liminf_{n\to\infty}\Big [\la
\int_\om 
\frac{a(x)}{1-\ga(x)}(z_n)_+^{1-\ga(x)}
\Big ]-\liminf_{n\to\infty}\Big [
\int_\om 
\frac{b(x)}{r(x)}(z_n)_+^{r(x)}
\Big ]\nonumber\\
 &       >
\int_\om 
\frac{1}{p(x)} |\grad z_0|^{p(x)}
+\iint_{\mathbb{R}^{2N}}
\frac{1}{p(x,y)}
\frac{|z_0(x)-z_0(y)|^{p(x,y)}}
{|x-y|^{N+sp(x,y)}}\nonumber\\
                &-\la
\int_\om 
\frac{a(x)}{1-\ga(x)}(z_0)_+^{1-\ga(x)}    
\int_\om 
\frac{b(x)}{r(x)}(z_0)_+^{r(x)}\nonumber\\
   &     =J_\la(z_0).
    \end{align}
    By applying Lemma 
    \ref{lm:fiber_map_analysis} for 
    $u_0\in \X\setminus\{0\}$
there exists $0<t_0^-\in \real^+$
such that $t_0^-u_0\in\Nh^-.$
Since we have assumed that 
$z_n\nrightarrow z_0$
in $\X$
as $n\to\infty,$
it follows that
\begin{equation}\label{eq28}
    \rh(t_0^-z_0)< \liminf_{n\to\infty} 
    \rh(t_0^-z_n).
\end{equation}
Also, using Lemma \ref{lem:modular_cgs_lp} 
and Lebesgue dominated convergence theorem, 
we get
\begin{equation}\label{eq25}
    \lim_{n\to\infty}
\int_\om a(x)(t_0^-z_n)_-^{1-\ga(x)}=
\int_\om a(x)(t_0^-z_0)_+^{1-\ga(x)}
  \end{equation}
and 
\begin{equation}\label{eq26}
    \lim_{n\to\infty}
\int_\om b(x)(t_0^-z_n)_+^{r(x)}=
\int_\om b(x)(t_0^-z_0)_+^{r(x)}.
  \end{equation}
From equations 
\eqref{eq:fibering_map_first_derivative}, 
\eqref{eq28}, \eqref{eq25}, and \eqref{eq26}, 
we conclude that
    \begin{align}\label{eq27}
        \lmt 
        & \varphi'_{z_n}(t_0^-)
  =\liminf_{t\to\infty}\Big [
\int_\om (t_0^-)^{p(x)-1}|\grad (z_n)|^{p(x)}
 \notag \\
 & \quad 
 +\iint_{\mathbb{R}^{2N}} (t_0^-)^{p(x,y)-1}
\frac{|(z_n)(x)-(z_n)(y)|^{p(x,y)}}
{|x-y|^{N+sp(x,y)}}  \nonumber\\
        &\quad -\la
\int_\om (t_0^-)^{-\ga(x)}a(x)(z_n)_+^{1-\ga(x)}
        -
\int_\om (t_0^-)^{r(x)-1}b(x)(z_n)_+^{r(x)}
\Big ]\nonumber\\
       & \ge
\frac{1}{(t_0^-)^+}
\Big  [\liminf_{n\to\infty}\rh((t_0^-)^+z_n)-\la\lmt
\int_\om (t_0^-)^{1-\ga(x)}a(x)(z_n)_+^{1-\ga(x)}
 \nonumber\\
        &\quad -\lmt
\int_\om (t_0^-)^{r(x)}b(x)(z_n)_+^{r(x)}
\Big ]\nonumber\\
        & >
\frac{1}{(t_0^-)^+}\Big [\rh((t_0^-)^+z_0) -\la
\int_\om (t_0^-)^{1-\ga(x)}a(x)(z_0)_+^{1-\ga(x)} \notag \\
 & \quad -
\int_\om (t_0^-)^{r(x)}b(x)(z_0)_+^{r(x)}
\Big ]\nonumber\\
        &=\varphi'_{z_0}(t_0^-)=0.
    \end{align}
    Hence, for sufficiently large $n,$
we have $\ds \lmt \varphi'_{z_n}(t_0^-)>0.$
Since $z_n\in \Nh^-$
for all $n\in\mathbb{N},$
it holds that $\varphi_{z_n}'(1)=0$
and $\varphi_{z_n}''(1)<0.$
By Lemma \ref{lm:fiber_map_analysis}, 
this implies that $\varphi_{u_n}'(t)<0$
for all $t>1.$
Therefore, from \eqref{eq27}, 
we conclude that $t_0^-<1.$
Since $t_0^-z_0\in\Nh^-$
and by Lemma \ref{lm:fiber_map_analysis}, 
$\varphi_{z_n}(t)$
has global maximum at $1.$
Thus, from \eqref{eq24}, 
it follows that
$$
J_\la(t_0^-z_0)<\lmt J_\la(t_0^-z_n)
\le 
\lmt J_\la(z_n)=\inf_{z\in\Nh^-}J_\la(z),
$$
which contradicts the fact that 
$t_0^-z_0\in\Nh^-.$
Hence, $z_n\rightarrow z_0$
strongly in $\X$
as $n\to\infty$
and thus $z_0\in\Nh.$
But by Lemma(refer) $\Nh^0=\{0\}$
and since by Lemma \ref{lm:Nehari_negative} 
$J_\la(z_0)=\ds \lmt J_\la(z_n)>0,$
we conclude that $z_0\in\Nh^-.$
\end{proof}

For the remainder of the paper, 
we will denote by $u\in \Nh^+$
and $v\in\Nh^-$
the minimizers obtained in Lemma 
\ref{lm:min_pos} and \ref{lm:min_neg}, 
respectively.

\begin{lemma}
    For every $0\le\phi\in\X,$
we have $a(x)u_+^{-\ga(x)}\phi\in L^1(\om)$
and 
\begin{align}\label{eq16}
&
\int_\om|\grad u|^{p(x)-2}\na u\na\phi \notag \\
& +
 \iint_{\mathbb{R}^{2N}} 
\frac{|u(x)-u(y)|^{p(x,y)-2}(u(x)-u(y))
(\phi(x)-\phi(y))}{|x-y|^{N+sp(x,y)}}\nonumber\\
    &-\la
\int_\om a(x)u_+^{-\ga(x)}\phi -
\int_\om b(x)u_+^{r(x)-1}\phi\ge0.
    \end{align}
    Moreover, $u>0$
a.e. in $\om$.
\end{lemma}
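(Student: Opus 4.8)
The plan is to first promote $u$ from a minimizer over $\Nh^+$ to an \emph{unconstrained} local minimizer of $J_\la$ on $\X$, then extract \eqref{eq16} by differentiating along one-sided perturbations and using Fatou's lemma to absorb the singular term with the correct sign, and finally read off positivity from the $L^1$-bound so obtained.

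I would begin by recording the projection fact: for every $w\in\X\sm\{0\}$ with $J_\la(w)<0$, Lemma \ref{lm:fiber_map_analysis} supplies $t^+_w>0$ with $t^+_w w\in\Nh^+$ and $J_\la(t^+_w w)\le J_\la(w)$ (in the alternative $R(w)>0$ the strict negativity of $J_\la(w)$ forces $1<t^*$, so $1$ lies in the minimizing range $[0,t^*]$; if $R(w)=0$ this range is all of $[0,\infty)$). Since $u$ minimizes $J_\la$ over $\Nh^+$, this yields $J_\la(w)\ge J_\la(t^+_w w)\ge\Th_\la^+=J_\la(u)$ for every such $w$. Applying it to $w=|u|$ and using $J_\la(t|u|)\le J_\la(tu)$ for all $t\ge 0$ — the local gradient term is unchanged, the nonlocal term cannot increase because $\big||u|(x)-|u|(y)\big|\le|u(x)-u(y)|$, and the two subtracted terms cannot decrease because $|u|\ge u_+$ — one sees that the $\Nh^+$-projection of $|u|$ is again a minimizer of $J_\la$ on $\Nh^+$; replacing $u$ by it we may assume $u\ge 0$, so $u_+=u$. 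Finally, since $J_\la(u)=\Th_\la^+<0$ and $J_\la$ is continuous on $\X$ (by the convergences already used in the proof of Lemma \ref{lm:min_pos}), there is a ball $B\ni u$ in $\X$ with $0\notin B$ and $J_\la<0$ on $B$; the recorded inequality then gives $J_\la(w)\ge J_\la(u)$ for all $w\in B$, i.e.\ $u$ is a local minimizer of $J_\la$ on $\X$.

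Next, fix $0\le\phi\in\X$. For $t>0$ small, $u+t\phi\in B$, hence $J_\la(u+t\phi)-J_\la(u)\ge0$. I would split this increment as $\mathcal A_t-\la\,\mathcal B_t$, where $\mathcal A_t$ collects the increments of the $p(x)$-gradient, the nonlocal, and the $b$-terms, and
\[
\mathcal B_t=\int_\om\frac{a(x)}{1-\ga(x)}\Big((u+t\phi)^{1-\ga(x)}-u^{1-\ga(x)}\Big)\,dx\ge 0,
\]
the sign being forced by $\phi\ge0$ and $0<1-\ga(x)<1$. Each summand of $\mathcal A_t$ is differentiable in $t$, so by dominated convergence (the $b$-term controlled through Hölder's inequality and $r^+<(p^*)^-$),
\[
\frac{\mathcal A_t}{t}\;\longrightarrow\;\mathcal A:=\int_\om|\grad u|^{p(x)-2}\grad u\cdot\grad\phi+\iint_{\mathbb{R}^{2N}}\frac{|u(x)-u(y)|^{p(x,y)-2}(u(x)-u(y))(\phi(x)-\phi(y))}{|x-y|^{N+sp(x,y)}}-\int_\om b(x)u^{r(x)-1}\phi,
\]
which is finite. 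The integrand of $\mathcal B_t/t$ is nonnegative and converges pointwise to $a(x)u^{-\ga(x)}\phi$ on $\{u>0\}$ and to $+\infty$ on $\{u=0<\phi\}$, so Fatou's lemma gives $\liminf_{t\to0^+}\mathcal B_t/t\ge\int_\om a(x)u^{-\ga(x)}\phi\,dx$. Hence
\[
0\le\limsup_{t\to0^+}\frac{J_\la(u+t\phi)-J_\la(u)}{t}\le\mathcal A-\la\int_\om a(x)u^{-\ga(x)}\phi\,dx,
\]
which both shows $\la\int_\om a(x)u^{-\ga(x)}\phi\,dx\le\mathcal A<\infty$, i.e.\ $a(x)u_+^{-\ga(x)}\phi\in L^1(\om)$, and, after rearrangement, is precisely \eqref{eq16}.

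For the last assertion, suppose $|\{u=0\}|>0$. Then some ball $B'\Subset\om$ has $|\{u=0\}\cap B'|>0$, and choosing $0\le\phi\in C_0^\infty(\om)\subset\X$ with $\phi>0$ on $B'$ we would have $a(x)u_+^{-\ga(x)}\phi=+\infty$ on the positive-measure set $\{u=0\}\cap B'$ (as $a>0$, $\ga(x)>0$), contradicting $a(x)u_+^{-\ga(x)}\phi\in L^1(\om)$. So $|\{u=0\}|=0$, and with $u\ge 0$ this gives $u>0$ a.e.\ in $\om$. The single genuine obstacle is the non-differentiability of $J_\la$ due to the singular power $u^{-\ga(x)}$: it is circumvented by perturbing only in the cone $\{\phi\ge0\}$, along which that term is monotone, so Fatou's lemma delivers the inequality in the right direction; the preparatory point is turning the constrained minimizer $u$ into an unconstrained local minimizer via the fiber-map projection of Lemma \ref{lm:fiber_map_analysis}, which is what makes one-sided perturbations admissible.
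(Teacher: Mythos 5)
Your overall route is the same as the paper's: obtain $J_\la(u+t\phi)\ge J_\la(u)$ for small $t>0$ and $\phi\ge0$, differentiate one-sidedly, and pass to the limit in the singular quotient by a monotone-convergence/Fatou argument, reading off both the $L^1$-bound and the positivity. The paper reaches the starting inequality directly from Lemma \ref{lm9} and uses the monotonicity of the difference quotient $((u+\varepsilon\phi)_+^{1-\gamma(x)}-u_+^{1-\gamma(x)})/\varepsilon$ with the monotone convergence theorem; you pass through the intermediate claim that $u$ is an unconstrained local minimizer and use Fatou. Those variations are fine. There are, however, two points that need attention.

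First, your ``projection fact'' is false as stated. You claim that for \emph{every} $w\in\X\setminus\{0\}$ with $J_\la(w)<0$ one has $1<t^*_w$, hence $J_\la(t^+_w w)\le J_\la(w)$. But the fiber map in Lemma \ref{lm:fiber_map_analysis} is negative on $(0,t^*_w)$ \emph{and} on $(t^{**}_w,\infty)$, where $t^{**}_w>t^-_w$ is the zero created because $\phi_w(t)\to-\infty$ as $t\to\infty$. So $J_\la(w)<0$ only tells you that $1<t^*_w$ \emph{or} $1>t^{**}_w$, and in the second case $J_\la(w)$ can be strictly below $J_\la(t^+_w w)$ (take $w=Mu$ with $M$ large: $J_\la(Mu)\to-\infty$ while $J_\la(t^+_{Mu}Mu)\ge\Th_\la^+$). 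For the local-minimizer step you only need the inequality for $w$ in a small ball around $u$; there it can be rescued, but what does the rescuing is precisely the continuity of the projection furnished by Lemma \ref{lm9} (implicit function theorem), which pins $t^+_w$ near $t^+_u=1$ and hence $1<t^*_w$ for $w$ near $u$. As written, your argument asserts a global statement that is wrong and does not supply the continuity that the local statement requires; this is the same implicit step the paper glosses, but the paper at least invokes Lemma \ref{lm9}, whereas you invoke the incorrect global claim.

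Second, the detour through $|u|$ is both unnecessary and, after the first point, not justified: to conclude that the $\Nh^+$-projection of $|u|$ is again a minimizer you would need $1\le t^*_{|u|}$, and there is no reason for that to hold \emph{a priori} ($|u|$ is not close to $u$ in $\X$ in general). Fortunately you do not need it: the energy functional and the fibering map are built from $u_+$, so the entire argument can be carried out with $u_+$ in place of $u$ without first forcing $u\ge 0$. The finiteness of $\int_\om a(x)u_+^{-\ga(x)}\phi\,dx$ for suitable test functions $\phi>0$ already forces $|\{u_+=0\}|=|\{u\le 0\}|=0$, which gives $u>0$ a.e.\ directly. If you drop the $|u|$ reduction and replace the global projection fact by the local statement supplied by Lemma \ref{lm9}, the rest of your argument (the split $\mathcal A_t-\la\mathcal B_t$, dominated convergence on $\mathcal A_t/t$, Fatou on $\mathcal B_t/t$, and the test-function argument for positivity) is correct and matches the paper's proof in substance.
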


\begin{proof}[\bf Proof.]
    Let $u\in \Nh^+.$
From Lemma \ref{lm9}, there exists 
$\ve_0>0$
and 
$f_{\ve_0}:B_{\varepsilon_{0}}(0)\ra \R^+$
such that 
$f_{\ve_0}(\ve \phi)(u+\ve\phi)\in\Nh^+$
with 
$$
J_\la(u+\ve\phi)\ge J_\la(f_{\ve_0}(\ve \phi)
(u+\ve\phi))\geq J_\la(u)
$$
for all $0\leq \ve \lv \phi \rv_{\X}<\ve_0.$
Hence,
    $$J_\la(u+\ve\phi)-J_\la(u)\ge0.$$
    From this dividing by $\ve,$
we get
    \begin{align}\label{eq15}
&    \la
\int_\om 
\frac{a(x)}{1-\ga(x)} 
\frac{(u+\ve\phi)_+^{1-\ga(x)}-u_+^{1-\ga(x)}}{\ve} 
\notag \\
&\le    
\int_\om 
\frac{1}{p(x)}
\frac{|\grad (u+\ve\phi)|^{p(x)}-
|\grad u|^{p(x)}}{\ve}\nonumber\\
        &+\iint_{\mathbb{R}^{2N}} 
\frac{1}{p(x,y)}
\frac{|(u+\ve\phi)(x)-(u+\ve\phi)(y)|^{p(x,y)}
-|u(x)-u(y)|^{p(x,y)}}{|x-y|^{N+sp(x,y)}\ve}
\nonumber\\
        &-
\int_\om 
\frac{b(x)}{r(x)}
\frac{(u+\ve\phi)_+^{r(x)}-u_+^{r(x)}}{\ve}
    \end{align}
    Let us consider the first integral 
    on the right-hand side.
For $\ve>0,$
using the mean value theorem there exists 
$0<\theta\in \real$
with $0<\theta<\ve$
such that
 \begin{align}\label{eq14}
 &
\frac{1}{p(x)}
\frac{|\grad (u+\ve\phi)|^{p(x)}-
|\grad u|^{p(x)}}{\ve}
 =|\grad u+
\theta\grad\phi|^{p(x)-2}(\grad u+\theta\grad\phi)
\grad\phi\nonumber\\
    & \le |\grad u+\theta\grad\phi|^{p(x)-1}|\grad\phi|
  \le(|\grad u|+|\grad\phi|)^{p(x)-1}|\grad\phi|.
    \end{align}
    Now, applying H\"older inequality, 
    we get
\begin{align*}
&
\int_\om (|\grad u|+
|\grad\phi|)^{p(x)-1}|\grad\phi| \\
&
  \le 2 \||\grad u|+
    |\grad\phi|)^{p(x)-1}\|_{L^{p'(x)}(\om)}
    \||\grad\phi|\|_{L^{p(x)}(\om)}\\
    &\le 2 \||\grad u|+
    |\grad\phi|)\|^{(p^+-1)}_{L^{p(x)}(\om)}
    \||\grad\phi|\|_{L^{p(x)}(\om)}\\
    &\le 2^{p^+} 
    \Big (\|\grad u\|_{L^{p(x)}(\om)}+
    \|\grad\phi\|_{L^{p(x)}(\om)}\Big )^{(p^+-1)}
    \||\grad\phi|\|_{L^{p(x)}(\om)}.
    \end{align*}
Therefore, 
$(|\grad u|+|\grad\phi|)^{p(x)-1}|
\grad\phi|\in L^1(\om).$
Hence, from \eqref{eq14}, 
we can use Lebesgue 
dominated convergence theorem to get
\begin{align*}
&
    \lim_{\ve\to 0^+}
\int_\om
\frac{1}{p(x)}
\frac{|\grad (u+\ve\phi)|^{p(x)}-
|\grad u|^{p(x)}}{\ve} \\
&=
\int_\om\lim_{\theta\to0^+}|\grad u
+\theta\grad\phi)|^{p(x)-2}(\grad u
+\theta\grad\phi)\grad\phi \nonumber\\
    &=
\int_\om |\grad u|^{p(x)-2}\grad u\grad\phi.
\end{align*}
   Similarly, letting $\ve\to 0^+$
in each of the remaining terms on the 
right-hand side of \eqref{eq15} shows that 
they have finite limit values.
We also observe that, for each $x\in \om$
the difference quotient 
$$
\frac{(u+\ve\phi)_+^{1-\ga(x)}-u_+^{1-\ga(x)}}{\ve}
$$
   increases monotonically as $\ve\to 0^+$
(since $u^{1-\ga(x)}$
is concave for $0<\ga(x)<1$) and 
     \begin{equation*}
        \lim_{\ve\to0^+}
\frac{(u+\ve\phi)_+^{1-\ga(x)}-u_+^{1-\ga(x)}}{\ve}=
        \begin{cases}
0 &\text{ if } \phi=0,\\
u_+^{-\ga(x)}\phi 
&\text{ if }\phi>0 \text{ and }u_+>0,\\
\infty
 &\text{ if }\phi>0 \text{ and }u_+=0.
        \end{cases}
    \end{equation*}
    Hence, by the monotone 
convergence theorem, we obtain 
$a(x)u_+^{-\ga(x)}\phi\in L^1(\om),u>0
\text{ a.e. in }\om$
and 
\begin{align*}
\int_\om|\grad u|^{p(x)-2}\na u\na\phi
+&\iint_{\mathbb{R}^{2N}}
\frac{|u(x)-u(y)|^{p(x,y)-2}(u(x)
-u(y))(\phi(x)-\phi(y))}{|x-y|^{N+sp(x,y)}}\\
    &-\la
\int_\om a(x)u_+^{-\ga(x)}\phi -
\int_\om b(x)u_+^{r(x)-1}\phi\ge0.
    \end{align*}
    Using Lemma \ref{lm9} and 
\ref{lm:min_neg} and reasoning as above, 
we conclude that \eqref{eq16} also holds 
for $v\in\Nh^-$.
\end{proof}

\begin{lemma}\label{lm:existence_result}
    The functions $u$
and $v$
are positive weak solutions of \eqref{eq1}.
\end{lemma}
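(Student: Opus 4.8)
The plan is as follows. Positivity is already in hand: $u,v>0$ a.e.\ in $\om$, and the variational inequality \eqref{eq16} holds for every nonnegative $\phi\in\X$ (both proved in the preceding lemma). It remains only to promote \eqref{eq16} to the equality \eqref{weak_sol}. I carry this out for $u\in\Nh^+$; the case $v\in\Nh^-$ is identical, using Lemma \ref{lm9} and Lemma \ref{lm:min_neg} in place of their $\Nh^+$ analogues. Observe first that the left‑hand side of \eqref{weak_sol}, which I denote $I(\phi)$, is well defined and \emph{linear} in $\phi\in\X$: the two principal‑operator terms are finite by H\"older's inequality, and $a(x)u^{-\ga(x)}\phi\in L^1(\om)$ for \emph{every} $\phi\in\X$ since $a(x)u^{-\ga(x)}|\phi|\in L^1(\om)$ by applying the preceding lemma to $|\phi|\in\X$. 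Consequently, once we know $I(\phi)\ge0$ for all $\phi\in\X$, replacing $\phi$ by $-\phi$ forces $I(\phi)=0$, i.e.\ \eqref{weak_sol}, and since $u=u_+>0$ a.e.\ this makes $u$ a positive weak solution of \eqref{eq2} and hence of \eqref{eq1}.

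So the whole point is to show $I(\phi)\ge0$ for an arbitrary $\phi\in\X$. Fix such a $\phi$ and $\ve>0$, and test \eqref{eq16} with the nonnegative function $(u+\ve\phi)_+\in\X$. Write $(u+\ve\phi)_+=u+\ve\phi+(u+\ve\phi)_-$. Using $u=u_+>0$ a.e., the contribution coming from $u$ is exactly $\phi_u'(1)$, which is zero because $u\in\Nh$ (cf.\ \eqref{eq:first_derivative}); the contribution coming from $\ve\phi$ is $\ve\,I(\phi)$; and the remaining terms, all carrying the factor $(u+\ve\phi)_-$, assemble into a quantity $\mathcal{R}_\ve$. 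Crucially, $(u+\ve\phi)_-$ is supported in $\om_\ve:=\{x\in\om:\ u(x)+\ve\phi(x)\le0\}$, and since $u>0$ a.e.\ we have $|\om_\ve|\to0$ as $\ve\to0^+$. Thus \eqref{eq16} reads $\ve\,I(\phi)+\mathcal{R}_\ve\ge0$, and it suffices to prove $\limsup_{\ve\to0^+}\ve^{-1}\mathcal{R}_\ve\le0$.

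For the estimate of $\mathcal{R}_\ve$: the two nonnegative terms $\la\int_\om a(x)u^{-\ga(x)}(u+\ve\phi)_-$ and $\int_\om b(x)u^{r(x)-1}(u+\ve\phi)_-$ occur in $\mathcal{R}_\ve$ with the favourable sign and may simply be dropped, so only the principal‑operator part must be bounded above. For the local term, on $\om_\ve$ one has $\na(u+\ve\phi)_-=-\na(u+\ve\phi)$, so it equals $-\int_{\om_\ve}|\na u|^{p(x)}-\ve\int_{\om_\ve}|\na u|^{p(x)-2}\na u\cdot\na\phi\le\ve\int_{\om_\ve}|\na u|^{p(x)-1}|\na\phi|$, and the last integral tends to $0$ by absolute continuity of the Lebesgue integral, since $|\na u|^{p(x)-1}|\na\phi|\in L^1(\om)$ by H\"older. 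For the nonlocal term I split $\mathbb{R}^{2N}$ into $\om_\ve\times\om_\ve$, $\om_\ve\times\om_\ve^c$, $\om_\ve^c\times\om_\ve$, $\om_\ve^c\times\om_\ve^c$ (the last contributes $0$): on $\om_\ve\times\om_\ve$ the integrand equals $-|u(x)-u(y)|^{p(x,y)}/|x-y|^{N+sp(x,y)}$ minus $\ve$ times a fixed $L^1(\mathbb{R}^{2N})$ density (Young's inequality), integrated over a set of vanishing measure; on the mixed regions one exploits the pointwise bounds $0\le u\le\ve|\phi|$ and $0\le(u+\ve\phi)_-\le\ve|\phi|$ on $\om_\ve$, together with the elementary monotonicity inequality $|a-b|^{p-2}(a-b)\big((a)_--(b)_-\big)\le0$ (valid for every $p>1$) applied with $a=(u+\ve\phi)(x)$, $b=(u+\ve\phi)(y)$, after expanding $|u(x)-u(y)|^{p(x,y)-2}(u(x)-u(y))$ around $|a-b|^{p(x,y)-2}(a-b)$ and using the standard Lipschitz‑type bounds for $t\mapsto|t|^{p-2}t$. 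This reduces everything to $\ve$ times integrals of fixed $L^1(\mathbb{R}^{2N})$ densities over $\om_\ve\times\mathbb{R}^N$, which vanish as $|\om_\ve|\to0$ by absolute continuity; hence $\mathcal{R}_\ve=o(\ve)$. Dividing $\ve\,I(\phi)+\mathcal{R}_\ve\ge0$ by $\ve$ and letting $\ve\to0^+$ gives $I(\phi)\ge0$, and by the linearity noted above we conclude $I(\phi)=0$ for all $\phi\in\X$, finishing the proof for $u$ and, verbatim, for $v$. The step I expect to be the genuine obstacle is this last nonlocal estimate: the non‑integrable singularity of the kernel on the diagonal makes it subtle to gain the full factor $\ve$ from the truncation $(u+\ve\phi)_-$, and the careful splitting above — using the pointwise smallness of $u$ on $\om_\ve$ and the sign inequality for the $(\cdot)_-$ pairing — is what makes it work; the local term and the singular and superlinear terms are, by contrast, routine.
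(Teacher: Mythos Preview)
Your proposal is correct and follows essentially the same strategy as the paper: both test the variational inequality \eqref{eq16} with $(u+\ve\phi)_+$, split this as $u+\ve\phi+(u+\ve\phi)_-$, use $\phi_u'(1)=0$ coming from $u\in\Nh$, and then show that the remainder $\mathcal R_\ve$ carried by $(u+\ve\phi)_-$ is $o(\ve)$ via $|\om_\ve|\to0$, finally concluding by linearity and the substitution $\phi\mapsto-\phi$. The only substantive difference is in the bookkeeping of the nonlocal remainder: the paper writes out an explicit eight–piece decomposition of $\Q$ according to $\om_1,\om_2,\om^c$ and estimates each piece directly by H\"older, whereas you use the sign inequality $|a-b|^{p-2}(a-b)\big((a)_--(b)_-\big)\le0$ together with Lipschitz bounds for $t\mapsto|t|^{p-2}t$; both routes lead to the same $o(\ve)$ conclusion, and your approach has the virtue of making the favourable sign transparent. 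One small caution on your ``fixed $L^1(\mathbb R^{2N})$ density'' claim: bounding $(u+\ve\phi)_-(x)$ by $\ve|\phi(x)|$ alone does not give an integrable majorant against the singular kernel (the factor $|\phi(x)|^{p(x,y)}/|x-y|^{N+sp(x,y)}$ is not integrable in $y$); you should instead use the Lipschitz bound $(u+\ve\phi)_-(x)-(u+\ve\phi)_-(y)\le|u(x)-u(y)|+\ve|\phi(x)-\phi(y)|$, which does produce a fixed $L^1$ majorant of the form $(|u(x)-u(y)|^{p(x,y)}+|\phi(x)-\phi(y)|^{p(x,y)})/|x-y|^{N+sp(x,y)}$ and makes the absolute–continuity argument go through cleanly.
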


\begin{proof}[\bf Proof.]
    For any $\phi\in \X$
and $\ve>0,$
set $\Phi(x)=(u+\ve\phi)_+(x).$
Let $\om=\om_1\times\om_2$
where
$$
    \om_1:=\{x\in\om:u+\ve\phi>0\}
    \text{ and } 
    \om_2:=\{x\in\om:u+\ve\phi\le0\}.
    $$
    We can see that $\Phi|_{\om_1}=u+\ve\phi$
and $\Phi|_{\om_2}=0.$
Next, we decompose $\Q$
in the following way
\begin{align*}
\Q:= (\om_1\times\om^c) 
& \cup(\om_2\times\om^c)
\cup(\om^c\times\om_1)  
  \cup (\om^c\times\om_2) \\ &
\cup(\om_2\times\om_1)\cup
 (\om_1\times\om_2)   \cup(\om_1\times\om_1)\cup
  (\om_2\times\om_2).
\end{align*}
For convenience, 
we denote 
$$
d \nu := \frac{dx \, dy}{|x-y|^{N+sp(x,y)}},
$$
and
$$
W(x,y):=U(x,y)[(u+\ve\phi)_-(x)-(u+\ve\phi)_-(y)] ,
$$
where $U(x,y):=|u(x)-u(y)|^{p(x,y)-2}(u(x)-u(y))$.

Thus, we have
\begin{enumerate}
\item[(i)]$\ds
\int_{\om_1\times\om^c}W(x,y)\,d\nu=
\int_{\om^c\times\om_1}W(x,y)\,d\nu=0.$

\item[(ii)]$\ds 
\int_{\om_2\times\om^c}W(x,y)\,d\nu=-
\int_{\om_2\times\om^c}|u(x)|^{p(x,y)-2}u(x)
(u+\ve\phi)(x) \,d\nu.$

\item[(iii)]$\ds 
\int_{\om^c\times\om_2}W(x,y)\,d\nu=-
\int_{\om_2\times\om^c}|u(x)|^{p(x,y)-2}u(x)
(u+\ve\phi)(x) \,d\nu.$

\item[(iv)]$\ds 
\int_{\om_2\times\om_1}W(x,y)\,d\nu=-
\int_{\om_2\times\om_1}U(x,y)(u+\ve\phi)
(x) \,d\nu.$

\item[(v)]$\ds 
\int_{\om_1\times\om_2}W(x,y)\,d\nu=-
\int_{\om_2\times\om_1}U(x,y)
(u+\ve\phi)(x) \,d\nu.$

\item[(vi)]$\ds 
\int_{\om_1\times\om_1}W(x,y)\,d\nu=0.$

\item[(vii)]$\ds 
\int_{\om_2\times\om_2}W(x,y)\,d\nu=-
\int_{\om_2\times\om_2}U(x,y)((u+\ve\phi)(x)
-(u+\ve\phi)(y)) \,d\nu.$
\end{enumerate}
Substituting $\Phi$
in \eqref{eq16}, we get
\begin{align*}
    0\le&
\int_\om|\grad u|^{p(x)-2}\na u\na\Phi+
\int_{\Q} 
\frac{U(x,y)(\Phi(x)
-\Phi(y))}{|x-y|^{N+sp(x,y)}}\nonumber -\la
\int_\om a(x)u_+^{-\ga(x)}\Phi \\
&-\int_\om b(x)u_+^{r(x)-1}\Phi\\
    =&
\int_\om|\grad u|^{p(x)-2}\na u\na(u+\ve\phi)+
\int_{\Q} 
\frac{U(x,y)((u+\ve\phi)(x)-
(u+\ve\phi)(y))}{|x-y|^{N+sp(x,y)}}\\
    &-\la
\int_\om a(x)u_+^{-\ga(x)}(u+\ve\phi) -
\int_\om b(x)u_+^{r(x)-1}(u+\ve\phi) \\
 &+
\int_\om|\grad u|^{p(x)-2}\na u\na(u+\ve\phi)_- 
+
\int_{\Q} 
\frac{W(x,y)}
{|x-y|^{N+sp(x,y)}} \\
&   -\la
\int_\om a(x)u_+^{-\ga(x)}(u+\ve\phi)_- -
\int_\om b(x)u_+^{r(x)-1}(u+\ve\phi)_-\\
    =&\ve \left\{
\int_\om|\grad u|^{p(x)-2}\na u\na\phi+
\int_{\Q} 
\frac{U(x,y)(\phi(x)-\phi(y))}{|x-y|^{N+sp(x,y)}}-\la
\int_\om a(x)u_+^{-\ga(x)}\phi \right. \\
& \left. -
\int_\om b(x)u_+^{r(x)-1}\phi\right\} +
\left\{ \int_\om|\grad u|^{p(x)}+
\int_{\Q} 
\frac{|u(x)-u(y)|^{p(x,y)}}{|x-y|^{N+sp(x,y)}} \right.\\
& \left. -\la
\int_\om a(x)u_+^{1-\ga(x)}-
\int_\om b(x)u_+^{r(x)} \right\} -
\int_{\om_2}|\grad u|^{p(x)}\\
    &-
\int_{\om_2}\ve|\na u|^{p(x)-2}\na u\na \phi+
\int_{\Q} 
\frac{W(x,y)}{|x-y|^{N+sp(x,y)}}\\
    &+\la
\int_{\om_2} a(x)u_+^{-\ga(x)}(u+\ve\phi)+
\int_{\om_2} b(x)u_+^{r(x)-1}(u+\ve\phi)\\
    \le& \ve I_1-
\int_{\om_2}\ve|\na u|^{p(x)-2}\na u\na \phi+
\int_{\Q} 
\frac{U(x,y)((u+\ve\phi)_-(x)-(u+\ve\phi)_-(y))}
{|x-y|^{N+sp(x,y)}}\\
    &+
\int_{\om_2} b(x)u_+^{r(x)-1}(u+\ve\phi),
\end{align*}
where 
\begin{equation*}
\begin{aligned}
I_1 =
\int_\om&|\grad u|^{p(x)-2}\na u\na\phi+
\int_{\Q} 
\frac{U(x,y)(\phi(x)-\phi(y))}{|x-y|^{N+sp(x,y)}} \\
&-\la
\int_\om a(x)u_+^{-\ga(x)}\phi -
\int_\om b(x)u_+^{r(x)-1}\phi.
\end{aligned}
\end{equation*}

\noindent Using above together with the estimates obtained for $W(x,y)$ gives 
    \begin{align*}
&0  \le \ve I_1-
\int_{\om_2}\ve|\na u|^{p(x)-2}\na u\na \phi+
\int_{\om_2} b(x)u_+^{r(x)-1}(u+\ve\phi) \\
&\quad -2
\int_{\om_2\times \om_1} 
\frac{U(x,y)(u+\ve\phi)(x)}{|x-y|^{N+sp(x,y)}} -2
\int_{\om_2\times \om^c} 
\frac{U(x,y)u(x)(u+\ve\phi)(x)}{|x-y|^{N+sp(x,y)}}\\
& \quad  -
\int_{\om_2\times \om_2} 
\frac{U(x,y)((u+\ve\phi)(x)-(u+\ve\phi)(y))}
{|x-y|^{N+sp(x,y)}}\\
    & \le  \ve I_1-
\int_{\om_2}\ve|\na u|^{p(x)-2}\na u\na \phi+
\int_{\om_2} b(x)u_+^{r(x)-1}(u+\ve\phi) \\
& \quad -2 \int_{\om_2\times \om_1} 
\frac{U(x,y)u(x)}{|x-y|^{N+sp(x,y)}} -2
\int_{\om_2\times \om^c} 
\frac{|u(x)|^{p(x,y)}}{|x-y|^{N+sp(x,y)}} \\
& \quad  - \int_{\om_2\times \om_2} 
\frac{|u(x)-u(y)|^{p(x,y)}}{|x-y|^{N+sp(x,y)}}-\ve
\left (2
\int_{\om_2\times \om_1} 
\frac{U(x,y)\phi(x)}{|x-y|^{N+sp(x,y)}}\right.\\
    & \quad \left. 2
\int_{\om_2\times \om^c} 
\frac{|u(x)|^{p(x,y)-2}u(x)\phi(x)}{|x-y|^{N+sp(x,y)}}+
\int_{\om_2\times \om_2} 
\frac{U(x,y)(\phi(x)-\phi(y))}{|x-y|^{N+sp(x,y)}}
\right )\\
    & \le  \ve I_1-\ve
\int_{\om_2}|\na u|^{p(x)-2}\na u\na \phi+
\int_{\om_2} b(x)u_+^{r(x)-1}(u+\ve\phi)\\
& \quad  -2
\int_{\om_2\times \om_1} 
\frac{U(x,y)u(x)}{|x-y|^{N+sp(x,y)}}
-2\ve
\left (
\int_{\om_2\times \om_1} 
\frac{U(x,y)\phi(x)}{|x-y|^{N+sp(x,y)}} \right. \\
& \quad \left. +
\int_{\om_2\times \om^c} 
\frac{|u(x)|^{p(x,y)-2}u(x)\phi(x)}
{|x-y|^{N+sp(x,y)}} + \frac{1}{2}
\int_{\om_2\times \om_2} 
\frac{U(x,y)(\phi(x)-\phi(y))}
{|x-y|^{N+sp(x,y)}}
\right )\\
    & \le   \ve I_1-\ve
\int_{\om_2}|\na u|^{p(x)-2}\na u\na \phi+\ve
\int_{\om_2} b(x)u_+^{r(x)-1}\phi \\
& +\ve\|b\|_{L^{
\frac{p^*(x)}{p^*(x)-r(x)}}(\om)}\ve^{r^--1}\Big (
\int_{\om_2}|\phi|^{p^*(x)}\Big )^{
\frac{r(x)}{p^*(x)}}\\
    &+2\ve\Big (
\int_{\om_2\times \om_1} 
\frac{|u(x)-u(y)|^{p(x,y)}}{|x-y|^{N+sp(x,y)}}\Big )^{
\frac{p(x,y)-1}{p(x,y)}}\Big (
\int_{\om_2\times \om_1} 
\frac{|\phi(x)|^{p(x,y)}}{|x-y|^{N+sp(x,y)}}\Big )^{
\frac{1}{p(x,y)}}\\
    &-2\ve\left[\Big (
\int_{\om_2\times \om^c} 
\frac{|u(x)|^{p(x,y)}}{|x-y|^{N+sp(x,y)}}\Big )^{
\frac{p(x,y)-1}{p(x,y)}}\Big (
\int_{\om_2\times \om^c} 
\frac{|\phi(x)|^{p(x,y)}}{|x-y|^{N+sp(x,y)}}\Big )^{
\frac{1}{p(x,y)}}\right.\\
    &+\Big (
\int_{\om_2\times \om_1} 
\frac{|u(x)-u(y)|^{p(x,y)}}{|x-y|^{N+sp(x,y)}}\Big )^{
\frac{p(x,y)-1}{p(x,y)}}\Big (
\int_{\om_2\times \om_1} 
\frac{|\phi(x)|^{p(x,y)}}{|x-y|^{N+sp(x,y)}}\Big )^{
\frac{1}{p(x,y)}}\\
    &\left.+
\frac{1}{2}\Big (
\int_{\om_2\times \om_2} 
\frac{|u(x)-u(y)|^{p(x,y)}}{|x-y|^{N+sp(x,y)}}\Big )^{
\frac{p(x,y)-1}{p(x,y)}}\Big (
\int_{\om_2\times \om_2} 
\frac{|\phi(x)-\phi(y)|^{p(x,y)}}
{|x-y|^{N+sp(x,y)}}\Big )^{
\frac{1}{p(x,y)}}\right].
    \end{align*}
    As the measure of the set 
    $\om_2:=\{x\in\om:u+\ve\phi\le0\}$
tends to $0$
as $\ve\to0.$
Therefore dividing by $\ve$
and letting $\ve\to0$
yields
\begin{align}\label{eq17}
\int_\om|\grad u|^{p(x)-2}\na u\na\phi+
&\iint_{\mathbb{R}^{2N}}
\frac{|u(x)-u(y)|^{p(x,y)-2}(u(x)-u(y))
(\phi(x)-\phi(y))}{|x-y|^{N+sp(x,y)}}\nonumber\\
    &-\la
\int_\om a(x)u_+^{-\ga(x)}\phi -
\int_\om b(x)u_+^{r(x)-1}\phi\ge0.
    \end{align}
Replacing $\phi$
by $-\phi$
in \eqref{eq17} gives the reverse inequality.
Hence, $u$
is a positive weak solution of \eqref{eq2} 
and therefore also a positive weak 
solution of \eqref{eq1}.

For $v\in \Nh^-,$
by applying Lemmas~\ref{lm9} and 
\ref{lm:min_neg}, 
together with similar arguments as above, 
we conclude that $v$
is a positive weak solution of \eqref{eq1}.
\end{proof}

\begin{proof}[\bf Proof of the Theorem \ref{main_theorem}]
The result follows directly from 
Lemmas~\ref{lm:min_pos}, \ref{lm:min_neg}, 
and~\ref{lm:existence_result}.
\end{proof}
Next, we establish the following regularity 
result using the De Giorgi iteration technique 
combined with a localization argument.
We adopt the approach developed in
\cite{KyHo} and
\cite{KuHo2} 
for the mixed local and nonlocal setting.

\begin{proof}[\bf Proof of the Theorem \ref{reg_result}]
    Let $u \in \X(\om)$
be a positive weak solution of \eqref{eq1} 
and define the level set 
$$
\mathcal{A}_k:=\{x\in\om:u>k\},\quad \text{for any } k\ge 1.
$$
 We choose $\Psi_n=(u-k_{n+1})_+\in\X(\om)$
as a test function in \eqref{weak_sol} 
where $$k_n:=k_*
\left (2-
\frac{1}{2^n}
\right )\quad n=0,1,2\dots,$$
with $k_*\ge1.$
Consequently, we obtain
\begin{align} \label{eq18}
&
\int_{\Omega} |\nabla u|^{p(x)-2} 
\nabla u \cdot \nabla \Psi_n \, dx  \notag \\
&+ \iint_{\mathbb{R}^{2N}} 
\frac{|u(x) - u(y)|^{p(x,y)-2} 
(u(x) - u(y)) (\Psi_n(x) - \Psi_n(y))}
{|x - y|^{N + sp(x,y)}} \, dx \, dy\nonumber\\
&=\la
\int_{\Omega} a(x) u^{-\gamma(x)} \Psi_n \, dx+
\int_{\Omega} b(x) u^{r(x)-1}  \Psi_n \, dx.
\end{align}
Let us consider the nonlocal integral
$$
\iint_{\mathbb{R}^{2N}} 
\frac{|u(x) -
 u(y)|^{p(x,y)-2} (u(x) - u(y))
 (\Psi_n(x) - 
 \Psi_n(y))}{|x - y|^{N + sp(x,y)}} \, dx \, dy.
 $$
Using the inequality: For every $f,g,\alpha\in \R$
and $\al>1$
there holds
$$
|f-g|^{\al-2}(f-g)(f_+-g_+)\ge |f_+-g_+|^\al
$$
where $f_+=\max\{f,0\}$
and $g_+=\max\{g,0\}$,
we get
\begin{align}\label{eq19}
|u(x) - u(y)|^{p(x,y)-2} 
(u(x) - u(y)) (\Psi_n(x) - \Psi_n(y))\ge 0.
\end{align}
It follows from \eqref{eq18} and \eqref{eq19} that
\begin{align}\label{eqr16}
\int_{\Omega} |\nabla u|^{p(x)-2} \nabla u 
\cdot \nabla \Psi_n \, dx 
\le \la
\int_{\Omega} a(x) u^{-\gamma(x)} \Psi_n \, dx+
\int_{\Omega} b(x) u^{r(x)-1}  \Psi_n \, dx.
\end{align}
We consider the two cases separately.\\
\textbf{Case(a):} Let $a(x) \in L^\infty(\Omega).$
By H\"older inequality and \eqref{eqr16}, we obtain
\begin{align}
&
\int_{\mathcal{A}_{k_{n+1}}} |\nabla u|^{p(x)} \, dx \notag \\
&
\le  \la\|a\|_{L^\infty(\A)}
\int_{\Omega} u^{-\gamma(x)} \Psi_n \, dx
+\|b\|_{L^\infty(\A)}
\int_{\A} u^{r(x)} \, dx\nonumber\\
\le&C_1
\int_{\A} u^{r(x)} \, dx.
\end{align}
Now, the arguments of
\cite[Theorem 4.2]{KuHo2} apply directly.\\
\textbf{Case(b):} Let $a(x) \in L^{
\frac{r(x)}{r(x) - (1 - \ga(x))}}(\Omega).$
Using H\"older inequality and Lemma \ref{lemA1} in
 \eqref{eqr16}, we get
\begin{align}\label{eqr1}
\int_{\mathcal{A}_{k_{n+1}}}
&  |\nabla u|^{p(x)} \, dx 
\le \la\|a\|_{L^{
\frac{r(x)}{r(x)-(1-\ga(x))}}(\A)}
\|u^{1-\gamma(x)}\|_{L^{
\frac{r(x)}{1-\ga(x)}}(\A)}\nonumber\\
&+\|b\|_{L^\infty(\A)}
\int_{\A} u^{r(x)} \, dx\nonumber\\
\le&C_1
\left (\|u\|_{L^{r(x)}(\A)}^{r^+}+
\|u\|_{L^{r(x)}(\A)}^{r^-}
\right )+C_2
\int_{\A} u^{r(x)} \, dx
\end{align}
Let $\rho(u):L^{r(x)}(\om)\to \R$
denote the modular function.
Then, by Lemma \ref{modular_ineq}, we can write
\begin{align*}
\int_{\mathcal{A}_{k_{n+1}}} |\nabla u|^{p(x)} \, dx 
\le \begin{cases}
    C_1\Big (\ds
\int_{\A} u^{r(x)} \, dx\Big )^{
\frac{r^-}{r^+}} &\text{ if } \rho(u)<1;\\
   C_1\Big (\ds
\int_{\A} u^{r(x)} \, dx\Big )^{
\frac{r^+}{r^-}} &\text{ if } \rho(u)\ge1.\\
\end{cases}
\end{align*}
Next, we define
\[
Z_n:=
\int_{\mathcal{A}_{k_n}} (u-k_n)^{r(x)} \, dx.
\]
Since, $k_*\le k_n \le k_{n+1}<2k^*
\text{ for all } n\in\N,$
using definition of $k_n,$
we obtain
\begin{align*}
    Z_n:=
\int_{\mathcal{A}_{k_n}} (u-k_n)^{r(x)} \, dx 
& \ge 
\int_{\A} u^{r(x)}\Big ( 1-
\frac{k_n}{k_{n+1}}\Big )^{r(x)} \, dx \\
&
\ge 
\int_{\A}
\frac{u^{r(x)}}{2^{r(x)(n+2)}} \, dx.
\end{align*}
Therefore,
\begin{equation}\label{eqr2}
\int_{\A}u^{r(x)} \, dx\le d_1\al^nZ_n,
\end{equation}
where $d_1=2^{2r^+}, \al=2^{r^+}>1.$
From \eqref{eqr1} and \eqref{eqr2}, we have
\begin{align}\label{eqr3}
\int_{\mathcal{A}_{k_{n+1}}} 
|\nabla (u-k_{n+1})|^{p(x)} \, dx 
\le \begin{cases}
    C_1\Big ( d_1\al^nZ_n\Big )^{
\frac{r^-}{r^+}} &\text{ if } \rho(u)<1;\\
   C_1\Big ( d_1\al^nZ_n\Big )^{
\frac{r^+}{r^-}} &\text{ if } \rho(u)\ge1.\\
\end{cases}
\end{align}
Next, we estimate the Lebesgue measure of $\A$
in the following way
\begin{align*}
    |\A|\le  
\int_{\A} \Big ( 
\frac{u-k_n}{k_{n+1}-k_n}\Big )^{r(x)} \, dx\le 
\int_{\mathcal{A}_{k_n}}
\frac{2^{r(x)(n+1)}}{k_*^{r(x)}} (u-k_n)^{r(x)}\, dx.
\end{align*}
This implies that 
\begin{equation}\label{eqr4}
    |\A|\le  
\frac{\al}{k_*^{r^-}}\al^nZ_n.
\end{equation}
We follow the localization arguments of
\cite[Theorem 4.2, page no.\ 159]{KuHo2}.
For any $R>0,$
let $\{B_i(R)\}_{i=1}^m$
be a finite open cover of $\oline\om$
such that $\oline\om\subset\cup_{i=1}^mB_i,$
here $B_i:=B_i(R)$
represents a ball with radius $R.$
We denote $p_i^+=\max_{B_i\cap\oline\om }p(x),$
$p_i^-=\min_{B_i\cap\oline\om }p(x),$
$r_i^+=\max_{B_i\cap\oline\om }r(x),$
$r_i^-=\min_{B_i\cap\oline\om }r(x)$
and $(p_i^-)^*=
\frac{Np_i^-}{N-p_i^-}.$
Since $p(x)\le r(x)<p^*(x)$
for all $x\in\oline\om$
and $p,r$
are continuous in $\oline\om,$
we can choose $R>0$
small enough such that $p_i^+\le r_i^+<(p_i^-)^*$.

Let $\{\xi_i\}_{i=1}^m$
be a partition of unity of $\oline\om$
associated with the open cover $\{B_i(R)\}_{i=1}^m,$
that is, for each $i=1,\dots,m,$
$\xi_i\in C_0^\infty(\R^N),$
$\text{supp}(\xi_i)\subset B_i, 0\le \xi_i\le 1$
and $ \sum_{i=1}^m\xi_i=1.$
Let $l>0$
be a constant such that $|\na\xi_i|\le l$
for $i=1,\dots,m.$
To proceed, we establish the following 
estimate for the gradient term
\begin{align}
\label{eqr5}
&
   \sum_{i=1}^m 
\int_{\mathcal{A}_{k_{n+1}}} 
|\xi_i\nabla (u-k_{n+1})|^{p_i^-} \, dx \\
\notag
& \le |\A|+
\int_{\mathcal{A}_{k_{n+1}}} 
|\nabla (u-k_{n+1})|^{p(x)} \, dx . 
\end{align}
Using \eqref{eqr3} and \eqref{eqr4} 
in \eqref{eqr5}, we get
\begin{align}
\label{eqr6}
&
   \sum_{i=1}^m 
\int_{\mathcal{A}_{k_{n+1}}} 
|\xi_i\nabla (u-k_{n+1})|^{p_i^-}
 \, dx \\ \notag
 &
 \le \begin{cases}
\frac{\al}{k_*^{r^-}}\al^nZ_n+C_1
\Big ( d_1\al^nZ_n\Big )^{
\frac{r^-}{r^+}} &\text{ if } \rho(u)<1;\\
   \frac{\al}{k_*^{r^-}}\al^nZ_n+C_1
   \Big ( d_1\al^nZ_n\Big )^{
\frac{r^+}{r^-}} &\text{ if }
 \rho(u)\ge1. 
\end{cases} 
\end{align}
Next, we estimate $Z_{n+1}$
by $Z_n.$
Using the partition of unity and Jensen's
 inequality, we have
\begin{equation*}
    \begin{aligned}
Z_{n+1} & =
\int_{\A}
  (u-k_{n+1}
  )^{r(x)} d x=
\int_{\A}
  (u-k_{n+1}
  )^{r(x)}
\Big (\sum_{i=1}^{m} \xi_{i}
\Big )^{r^{+}} dx \\
& \leq m^{r^{+}-1} \sum_{i=1}^{m} 
\int_{\A}
\left (u-k_{n+1}
\right )^{r(x)} \xi_{i}^{r^{+}} dx \\
& \leq m^{r^{+}-1} \sum_{i=1}^{m} 
\int_{\A}
\left (u-k_{n+1}
\right )^{r(x)} \xi_{i}^{r(x)} dx.
\end{aligned}
\end{equation*}
Thus,
\begin{equation}\label{eqr7}
Z_{n+1}\leq m^{r^{+}-1} \sum_{i=1}^{m} 
\Big [
\int_{\A}
  (u-k_{n+1}
  )^{r_i^+} \xi_{i}^{r_i^+} dx+
\int_{\A}
  (u-k_{n+1}
  )^{r_i^-} \xi_{i}^{r_i^-} dx
  \Big].
\end{equation}
Let $i\in\{1,2,3,\dots,m\}$
be fixed and suppose that $q\in\{r_i^+,r_i^-\},$
that is, $p_i^-\le q\le (p_i^-)^*.$
By H\"older and Sobolev inequalities, we derive
\begin{align}\label{eqr10}
\int_{\A} &
\left (u-k_{n+1}
\right )^{q} \xi_{i}^{q} dx \\
&\le \Big (
\int_{\om_i}\lsb\xi_i
\left (u-k_{n+1}
\right )_+\rsb^{(p_i^-)^*} dx\Big )^{
\frac{q}{(p_i^-)^*}}|\A|^{1-
\frac{q}{(p_i^-)^*}}\nonumber\\
    &\le C_i^q\Big (
\int_{\om_i}\lsb\na(\xi_i
\left (u-k_{n+1}
\right )_+)\rsb^{p_i^-} dx\Big )^{
\frac{q}{p_i^-}}|\A|^{1-
\frac{q}{(p_i^-)^*}}\nonumber\\
    &\le C_i^q
\left ( 2^{p_i^--1}
\int_{\A}|\xi_i\na
\left (u-k_{n+1}
\right )_+|^{p_i^-}dx\right.\nonumber\\
 &\quad\left.+2^{p_i^--1}
\int_{\A}|\na\xi_i|^{p_i^-}
\left (u-k_{n+1}
\right )_+^{p_i^-}dx
\right )^{
\frac{q}{p_i^-}}|\A|^{1-
\frac{q}{(p_i^-)^*}}
\end{align}
here $C_i$
is the embedding constant.
We also have 
\begin{equation}\label{eqr8}
\int_{\om_i}|\xi_i
\left (u-k_{n+1}
\right )_+|^{p_i^-} dx\le 
\int_{\A}u^{r(x)} dx\le d_1\al^nZ_n.
\end{equation}
From \eqref{eqr6}, \eqref{eqr10} and \eqref{eqr8},
we get the estimate
\begin{flalign}\label{eqr9}
   &
\int_{\A}
\left (u-k_{n+1}
\right )^{q} \xi_{i}^{q} dx\nonumber\\
    &\le\begin{cases}
    |\A|^{1-
\frac{q}{(p_i^-)^*}}C_i^q  \left[ 2^{p_i^--1}\Big ( 
\frac{\al}{k_*^{r^-}}\al^nZ_n \right. \\
\qquad \qquad \qquad \qquad  \left. + 
C_1 \Big ( d_1\al^nZ_n\Big )^{
\frac{r^-}{r^+}} +d_1l^{p_i^-}\al^nZ_n\Big )\right]^{
\frac{q}{p_i^-}} 
\text{ if }\rho(u)<1;\\[10pt]
  |\A|^{1-
\frac{q}{(p_i^-)^*}}C_i^q\left[ 2^{p_i^--1}\Big ( 
\frac{\al}{k_*^{r^-}}\al^nZ_n \right. \\
\qquad \qquad \qquad \qquad
\left. +C_1\Big ( d_1\al^nZ_n\Big )^{
\frac{r^+}{r^-}}+d_1l^{p_i^-}\al^nZ_n\Big )\right]^{
\frac{q}{p_i^-}} 
\text{ if } \rho(u)\ge1.
\end{cases} 
\end{flalign}
Since $Z_{n}^{\frac{q}{p_{i}^{-}}} 
\leq Z_{n}+Z_{n}^{\frac{r^{+}}{p^{-}}},$
it follows from \eqref{eqr4} that
\begin{equation}\label{eqr11}
\begin{aligned}
\left |\A\right|^{1-
\frac{q}{
  (p_{i}^{-}
  )^{*}}} &\leq 
\frac{\al^{1-\frac{q}{
  (p_{i}^{-}
  )^{*}}}}{
  (k_{*}^{r^{-}}
  )^{1-\frac{q}{
  (p_{i}^{-}
  )^{*}}}}
\Big (\al^{1-\frac{q}{
  (p_{i}^{-}
  )^{*}}}
\Big )^{n} Z_{n}^{1-
\frac{q}{
  (p_{i}^{-}
  )^{*}}} \\
&\leq \frac{\al^{1-
\frac{q}{
  (p_{i}^{-}
  )^{*}}}}{k_{*}^{r^{-}(1-\eta)}}
\Big (\al^{1-\frac{q}{
  (p_{i}^{-}
  )^{*}}}
\Big )^{n}
  (Z_{n}+Z_{n}^{1-\eta}
  ),
\end{aligned}
\end{equation}
where, $\eta:=\ds\max_{1\le i\le m} 
\frac{r_i^+}{(p_i^-)^*}<1.$
To estimate $Z_{n+1},$
we first consider the case $\rho(u)<1.$
From \eqref{eqr7}, \eqref{eqr9}, and \eqref{eqr11},
 we deduce
\begin{align}\label{eqr12}
    Z_{n+1}&\le 
\frac{C_0}{k_{*}^{r^{-}(1-\eta)}}
\Big (\al^{1-
\frac{q}{
  (p_{i}^{-}
  )^{*}}}
\Big )^{n}\al^{
\frac{nr^+}{p^-}}
\Big (Z_{n}^{1+
\frac{r^+}{p^-}}+Z_{n}^{1-\eta+
\frac{(r^-)^2}{r^+p^-}}
\Big )\nonumber\\
    &\le 
\frac{C_0}{k_{*}^{r^{-}(1-\eta)}}\beta^{n}
\Big (Z_{n}^{1+
\frac{r^+}{p^-}}+Z_{n}^{1-\eta+
\frac{(r^-)^2}{r^+p^-}}
\Big ),
\end{align}
here, $C_0$
is a constant independent of $n,k_*$
and $\beta=\al^{1+
\frac{r^+}{p^-}}>1.$
Alternatively, \eqref{eqr12} takes the form
\begin{align}\label{eqr13}
    Z_{n+1}\le 
\frac{C_0}{k_{*}^{r^{-}(1-\eta)}}\beta^{n}
\left (Z_{n}^{1+\delta_2}+Z_{n}^{1+\delta_1}
\right ),
\end{align}
where $0<\delta_1=
\frac{(r^-)^2}{r^+p^-}-\eta<1\le\delta_2=
\frac{r^+}{p^-}.$
Using
\cite[Lemma 4.3]{KuHo3}, along with 
assumption $(i)$
of Theorem \ref{reg_result} and equation 
\eqref{eqr13}, we arrive at
\begin{equation}\label{eqr14}
Z_{n} \leq
\Big (
\frac{2 C_0}{k_{*}^{r^{-}(1-\eta)}}
\Big )^{
\frac{-1}{\delta_{1}}} \beta^{
\frac{-1}{\delta_{1}^{2}}} \beta^{
\frac{-n}{\delta_{1}}}, \quad n \in \mathbb{N}
\end{equation}
if
\begin{equation}\label{eqr15}
Z_{0}=
\int_{\mathcal{A}_{k_*}}
  (u-k_{*}
  )^{r(x)} d x \leq
\Big (
\frac{2 C_0}{k_{*}^{r^{-}(1-\eta)}}
\Big )^{
\frac{-1}{\delta_{1}}} \beta^{
\frac{-1}{\delta_{1}^{2}}}.
\end{equation}
Noticing that 
$$
\ds
\int_{\mathcal{A}_{k_{*}}}
\left (u-k_{*}
\right )^{r(x)} dx<\ds
\int_{\om}u_+^{r(x)} dx.
$$
Therefore, if we choose
$$
k_{*}=\max 
\Big (1,
\Big (
  (2 C_0 )^{
\frac{1}{\delta_{1}}} \beta^{
\frac{1}{\delta_{1}^{2}}} 
\int_{\Omega} u_{+}^{r(x)} d x
\Big )^{
\frac{\delta_{1}}{r^{-}(1-\eta)}}
\Big  )
$$
then \eqref{eqr15} holds.
Taking the limit as $n\to\infty$
in \eqref{eqr14} and applying Lebesgue 
dominated convergence theorem, we deduce
\begin{equation*}
\int_{\om}
\left (u-2k_{*}
\right )_+^{r(x)} dx=0,
\end{equation*}
which implies $
\left (u-2k_{*}
\right )_+=0$
a.e. in $\om.$
In other words, 
$\ds\operatorname*{ess\,sup}_{\Omega} u \leq 2k_*.$
This completes the proof for 
$\rho(u)<1.$
The case $\rho(u) \geq 1$
follows analogously under 
assumption $(ii)$
of Theorem~\ref{reg_result}.
\end{proof}

\noindent
{\bf Acknowledgement.}
The author, Shammi Malhotra, is supported by the 
Prime Minister’s Research Fellowship (PMRF ID - 1403226).

\end{document}